\newcommand{\CC}{\ensuremath{\mathbb{C}}}
\newcommand{\Z}{\ensuremath{\mathbb{Z}}}
\newcommand{\F}{\ensuremath{\mathbb{F}}}
\newcommand{\CG}{\ensuremath{\mathfrak{C}}}
\newcommand{\DG}{\ensuremath{\mathfrak{D}}}
\newcommand{\AG}{\ensuremath{\mathfrak{A}}}
\newcommand{\SG}{\ensuremath{\mathfrak{S}}}
\newcommand{\ka}{\ensuremath{\Bbbk}}
\newcommand{\kka}{\ensuremath{\overline{\Bbbk}}}
\newcommand{\XX}{\ensuremath{\overline{X}}}
\newcommand{\Pro}{\ensuremath{\mathbb{P}}}
\newcommand{\Aut}{\ensuremath{\operatorname{Aut}}}
\newcommand{\Gal}{\ensuremath{\operatorname{Gal}}}
\newcommand{\Pic}{\ensuremath{\operatorname{Pic}}}
\newcommand{\ord}{\ensuremath{\operatorname{ord}}}
\newcommand{\mmatr}[4]{\ensuremath{\left(
\begin{array}{ccc} #1&#2\\#3&#4\\
\end{array}
\right)}}
\newtheorem{theorem}[equation]{Theorem}
\newtheorem{proposition}[equation]{Proposition}
\newtheorem{lemma}[equation]{Lemma}
\newtheorem{corollary}[equation]{Corollary}
\theoremstyle{definition}
\newtheorem{example}[equation]{Example}
\newtheorem{definition}[equation]{Definition}
\theoremstyle{remark}
\newtheorem{remark}[equation]{Remark}
\title{Quotients of conic bundles}
\address{Institute for Information Transmission Problems, 19 Bolshoy Karetnyi side-str., Moscow 127994, Russia}
\address{Laboratory of Algebraic Geometry, GU-HSE, 7 Vavilova str., Moscow 117312, Russia}
\email{trepalin@mccme.ru}
\thanks{The author was partially supported by AG Laboratory HSE, RF government grant, ag. 11.G34.31.0023 and the grants RFFI 15-01-02164-a and N.SH.-2998.2014.1}
\author{Andrey Trepalin}
\begin{document}

\begin{abstract}
Let $\ka$ be an arbitrary field of characteristic zero. In this paper we study quotients of $\ka$-rational conic bundles over $\Pro^1_{\ka}$ by finite groups of automorphisms. We construct smooth minimal models for such quotients. We show that any quotient is birationally equivalent to a quotient of other $\ka$-rational conic bundle cyclic group $\CG_{2^k}$ of order $2^k$, dihedral group $\DG_{2^k}$ of order $2^k$, alternating group $\AG_4$ of degree $4$, symmetric group $\SG_4$ of degree $4$ or alternating group $\AG_5$ of degree $5$ effectively acting on the base of conic bundle. Also we construct infinitely many examples of such quotients which are not $\ka$-birationally equivalent to each other.

\end{abstract}

\maketitle
\section{Introduction}

Let $\ka$ be a field of characteristic zero, $K = \ka \left(x_1, \ldots x_n \right)$ be its purely transcendental extension and $G$ be a finite group acting on $\ka$. The main purpose of this paper is to study when the field of invariants $K^G$ is rational (i.e. purely transcendental) and construct some examples of non-rational fields of invariants.

In the language of algebraic geometry this problem can be formulated in the following way. Let $X$ be a $\ka$-rational variety and $G$ be a finite subgroup of $\Aut(X)$. When the quotient $X / G$ is $\ka$-rational? What is the $\ka$-birational classification of quotients $X/G$? This is a special case of a $\ka$-unirational varieties classification problem.

The answer for $n = 1$ is a classical result.

\begin{theorem}[J. L\"uroth \cite{Lur76}]
\label{Luroth}
Any unirational curve is rational.
\end{theorem}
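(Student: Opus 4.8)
The plan is to argue geometrically via Riemann--Hurwitz, which is legitimate since $\Char\ka=0$. Unwinding the definition, a unirational curve $C$ carries a dominant rational map $\Pro^n_\ka\dashrightarrow C$ for some $n\ge 1$; since the indeterminacy locus of a rational map from a smooth variety to a projective variety has codimension at least $2$, a general line $\ell\cong\Pro^1_\ka$ avoids it and still maps dominantly (non-constantly) to $C$, so I may assume from the outset that there is a dominant rational map $\Pro^1_\ka\dashrightarrow C$. Replacing $C$ by its smooth projective model changes nothing birationally, so I take $C$ smooth and projective and the map a finite surjective morphism $\phi\colon\Pro^1_\ka\to C$. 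Note that $C$ is automatically geometrically integral, being the image of $\Pro^1_\ka$.

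First I would pin down the genus. Genus is insensitive to the base change $\ka\hookrightarrow\kka$, and $\phi$ stays a finite morphism $\Pro^1_{\kka}\to C_{\kka}$, so Riemann--Hurwitz reads
\[
-2 \;=\; (\deg\phi)\bigl(2\,g(C_{\kka})-2\bigr) \;+\; \deg R,
\]
with $R\ge 0$ the ramification divisor and $\deg\phi\ge 1$. If $g(C_{\kka})\ge 1$ the right-hand side is $\ge 0>-2$, which is impossible; hence $g(C)=g(C_{\kka})=0$.

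Second I would exhibit a $\ka$-rational point of $C$, for instance the image $\phi([0:1])$ of a rational point of $\Pro^1_\ka$. A smooth projective geometrically integral genus-zero curve with a $\ka$-point $P$ is isomorphic to $\Pro^1_\ka$: by Riemann--Roch $h^0(C,\mathcal O_C(P))=2$, the pencil $|P|$ is base-point free, and the resulting morphism $C\to\Pro^1_\ka$ has degree $\deg P=1$, hence is an isomorphism. Therefore $C\cong\Pro^1_\ka$, and $C$ is rational.

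The one genuinely delicate point over a non-closed field is the second step: vanishing of the genus by itself does not suffice, since a conic without a rational point has genus zero yet is not $\ka$-rational; this is precisely where unirationality --- as opposed to mere rational connectedness of $C_{\kka}$ --- enters, through the rational point $\phi([0:1])$. Everything else (the reduction from $\Pro^n$ to a general line, the Riemann--Hurwitz inequality, the Riemann--Roch computation) is routine. As an alternative, purely field-theoretic route one can run the classical argument directly on $L=\ka(C)\subseteq\ka(t)$: take the minimal polynomial of $t$ over $L$, clear denominators to a primitive irreducible $f(t,X)\in\ka[t,X]$, and compare its $t$-degree and $X$-degree with those of $u(t)v(X)-u(X)v(t)$ for a non-constant coefficient $u/v$ of that minimal polynomial; Gauss's lemma then forces this coefficient to generate $L$, and it is the sought rational parameter.
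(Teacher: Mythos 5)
Your argument is correct; note that the paper does not prove Theorem \ref{Luroth} at all --- it is quoted as a classical result with a reference to L\"uroth's original paper, whose proof (like the sketch you add at the end) is purely field-theoretic: for $\ka \subsetneq L \subseteq \ka(t)$ with $L$ transcendental over $\ka$, Gauss's lemma applied to the minimal polynomial of $t$ over $L$ shows that any non-constant coefficient of that polynomial already generates $L$. Your route is the standard geometric one, valid here because $\Char \ka = 0$: reduce to a dominant map $\Pro^1_{\ka} \to C$ by restricting to a general $\ka$-line (legitimate since the indeterminacy locus has codimension at least $2$ and $\ka$, being of characteristic zero, is infinite, so suitable $\ka$-lines exist), kill the genus by Riemann--Hurwitz after base change to $\kka$, and then --- the genuinely arithmetic step, which you correctly isolate --- produce a $\ka$-point of $C$ as the image of a $\ka$-point of $\Pro^1_{\ka}$, so that Riemann--Roch applied to that point gives $C \cong \Pro^1_{\ka}$. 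The two approaches buy different things: the field-theoretic proof is elementary, works in every characteristic, and needs no smooth projective models, Riemann--Hurwitz or Riemann--Roch; your geometric proof is shorter given that machinery and makes transparent exactly where unirationality, rather than mere vanishing of the genus, is used --- namely to exclude conics without rational points, which is precisely the arithmetic phenomenon the rest of the paper exploits.
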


If $n=2$ and the field $\ka$ is algebraically closed then rationality of any unirational surface follows from Castelnuovo's rationality criterion (see \cite{Cast94} or \cite[Corollary V.5]{B96}).

\begin{theorem}
\label{Castelnuovo}
Let $\ka$ be an algebraically closed field of characteristic zero. Then any $\ka$-unirational surface is $\ka$-rational.
\end{theorem}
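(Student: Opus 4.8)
The plan is to reduce the statement to Castelnuovo's numerical rationality criterion. Since both $\ka$-unirationality and $\ka$-rationality are birational invariants, I may assume $X$ is a smooth projective surface, and I am given a dominant rational map $\varphi\colon\Pro^2_{\ka}\dashrightarrow X$. Recall the criterion: over an algebraically closed field a smooth projective surface $X$ is $\ka$-rational if and only if its irregularity $q(X)=h^1(X,\mathcal{O}_X)$ and its second plurigenus $P_2(X)=h^0(X,\omega_X^{\otimes 2})$ both vanish. Thus it suffices to prove that a $\ka$-unirational surface satisfies $q(X)=P_2(X)=0$.

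To see this, first resolve the indeterminacy of $\varphi$: after a finite chain of blow-ups $\pi\colon Y\to\Pro^2_{\ka}$ the composite becomes a morphism $f\colon Y\to X$ with $Y$ a smooth projective rational surface and $f$ dominant, hence generically finite. Here characteristic zero is essential --- $f$ is then separable, so generically \'etale, and therefore the pullback of a nonzero global pluricanonical (resp.\ holomorphic $1$-) form on $X$ is again nonzero. This gives injections $f^*\colon H^0(X,\omega_X^{\otimes n})\hookrightarrow H^0(Y,\omega_Y^{\otimes n})$ for all $n\ge 1$ and $H^0(X,\Omega^1_X)\hookrightarrow H^0(Y,\Omega^1_Y)$. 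Since $Y$ is obtained from $\Pro^2_{\ka}$ by blow-ups, all of the target groups vanish; hence $P_n(X)=0$ for every $n\ge1$ and $h^0(X,\Omega^1_X)=0$, and by Hodge symmetry in characteristic zero $q(X)=h^1(X,\mathcal{O}_X)=h^0(X,\Omega^1_X)=0$. In particular $q(X)=P_2(X)=0$.

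The one remaining ingredient --- the hard direction of Castelnuovo's criterion --- I would quote rather than reprove (\cite{Cast94}, \cite[Corollary~V.5]{B96}); proving it is the real obstacle and uses essentially the full classification of surfaces (or a Mori-theoretic argument on the cone of curves). If one prefers a more self-contained route, note that the computation above in fact gives $P_n(X)=0$ for all $n$, i.e.\ $\kappa(X)=-\infty$; by the Enriques classification a minimal model of $X$ is then either $\Pro^2_{\ka}$ or a geometrically ruled surface over a smooth curve of genus $q(X)=0$, that is over $\Pro^1_{\ka}$, and every such surface is $\ka$-rational --- hence so is $X$. Either way, the essential characteristic-zero input is the preservation of (pluri)canonical forms under dominant morphisms, which fails in positive characteristic, where $\ka$-unirational non-rational surfaces do exist.
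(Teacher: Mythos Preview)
Your proof is correct and is the standard argument. Note, however, that the paper does not actually prove this theorem: it is stated as a classical result, with the remark that it ``follows from Castelnuovo's rationality criterion'' and citations to \cite{Cast94} and \cite[Corollary~V.5]{B96}. Your write-up is precisely an unpacking of that one-line attribution --- resolving indeterminacy, pulling back pluricanonical and holomorphic $1$-forms along a generically finite dominant morphism in characteristic zero, and then invoking either Castelnuovo's criterion or the Enriques classification for $\kappa=-\infty$. So there is nothing to compare beyond saying that you have supplied the details the paper deliberately omits.
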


If the field $\ka$ is not algebraically closed then the analog of Theorem~\ref{Castelnuovo} is not true. For example, any del Pezzo surface of degree $4$ is $\ka$-unirational (see \cite[Theorem 7.8]{Man74}) but not all of them are $\ka$-rational (see \cite[Chapter~4]{Isk96}). Moreover, the unirationality degree of del Pezzo surface of degree $4$ equals $1$ or $2$ thus this surface is either $\ka$-birationally trivial or $\ka$-birationally equivalent to a quotient of $\ka$-rational surface by group of order $2$.

If a variety $X$ is $\ka$-birationally equivalent to a quotient of $\ka$-rational surface by a finite group of automorphisms then $X$ is called Galois $\ka$-unirational. Now there are no known ways to prove that a given geometrically rational (i.e. rational over algebraic closure of the field~$\ka$) surface is not $\ka$-unirational. But Galois $\ka$-unirational surfaces can be studied in explicit way since groups acting on $\ka$-rational surfaces are described (see \cite{DI1}). In this paper we study Galois $\ka$-unirational surfaces which are $\ka$-birationally equivalent to quotients of conic bundles. In following papers we plan to consider quotients of del Pezzo surfaces and get full classification of Galois $\ka$-unirational surfaces.

If $X$ is a $\ka$-rational surface and $G \subset \Aut(X)$ is a finite group by applying $G$-minimal model program one can obtain a $G$-minimal surface $S$ such that the quotients $X / G$ and $S / G$ are $\ka$-birationally equivalent. Any $G$-minimal surface is either a del Pezzo surface or a conic bundle (see {\cite[Theorem 1]{Isk79}}). Thus any quotient of a $\ka$-rational surface is birationally equivalent to a quotient of a del Pezzo surface or a conic bundle.

It is important to find examples of non-$\ka$-rational quotients of conic bundles since degree of del Pezzo surfaces is bounded and for a conic bundle $S$ the number $K_S^2$ can be arbitrarily small negative integer. Also classification of quotients of conic bundles can be useful in studying $\ka$-unirationality of conic bundles.

The main results of this paper are the following.

\begin{theorem}
\label{Cbundleclasses}
Let $\ka$ be a field of characteristic zero, $X$ be a $\ka$-rational surface and $G$ be a finite group acting on $X$. Then the quotient $X / G$ is $\ka$-birationally equivalent to either a quotient of a $\ka$-rational del Pezzo surface by a finite group of automorphisms, or a quotient of $\ka$-rational conic bundle by cyclic group $\CG_{2^k}$ of order~$2^k$, dihedral group $\DG_{2^k}$ of order~$2^k$, alternating group $\AG_4$ of degree $4$, symmetric group $\SG_4$ of degree $4$ or alternating group $\AG_5$ of degree $5$.
\end{theorem}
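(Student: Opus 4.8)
The plan is to run the equivariant minimal model program, reduce to a conic bundle over $\Pro^1_\ka$, and then eliminate both the fibrewise part of the group and the odd part of its action on the base.

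\emph{Reduction to a conic bundle.} First I would replace $X$ by a smooth projective $G$-surface $\ka$-birational to it and run the $G$-equivariant minimal model program over $\ka$. This produces a $G$-minimal surface $S$ with $S/G$ $\ka$-birationally equivalent to $X/G$, and $S$ is $\ka$-rational, being $\ka$-birational to $X$. By {\cite[Theorem~1]{Isk79}}, either $S$ is a del Pezzo surface, in which case it is a $\ka$-rational del Pezzo surface and we are in the first alternative, or $S$ carries a conic bundle structure $\pi\colon S\to C$. Since $S$ is $\ka$-rational, $C$ is a smooth projective geometrically rational curve with a $\ka$-point, hence $C\cong\Pro^1_\ka$.

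\emph{Reducing the group.} Now consider the exact sequence $1\to G_F\to G\to G_B\to 1$, where $G_F$ is the subgroup acting fibrewise and $G_B\subseteq\Aut(\Pro^1_\ka)$ is the image on the base. By the classification of finite subgroups of $\Aut(\Pro^1_{\kka})$, the group $G_B$ is cyclic, dihedral, or isomorphic to $\AG_4$, $\SG_4$ or $\AG_5$. Let $\bar R\trianglelefteq G_B$ be the largest normal subgroup of odd order: this is the unique cyclic subgroup of maximal odd order in the cyclic and dihedral cases and is trivial for $\AG_4$, $\SG_4$, $\AG_5$. It is characteristic in $G_B$, so its preimage $H\trianglelefteq G$ is normal, and a direct check gives $G/H\cong G_B/\bar R$, which is one of $\CG_{2^k}$, $\DG_{2^k}$, $\AG_4$, $\SG_4$, $\AG_5$. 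Moreover $G_F\subseteq H$ and $H/G_F\cong\bar R$, so
\[
X/G\ \cong\ S/G\ \cong\ (S/H)/(G/H)\ \cong\ \bigl((S/G_F)/\bar R\bigr)/(G_B/\bar R).
\]
Since every element of $G_B$ acting trivially on $\Pro^1_\ka/\bar R$ already lies in $\bar R$, the group $G_B/\bar R$ acts on $S/H$ effectively on the base $\Pro^1_\ka/\bar R\cong\Pro^1_\ka$; and $S/H$ fibres over $\Pro^1_\ka$ with genus-zero generic fibre (a quotient of the generic fibre of $\pi$), so a smooth relatively minimal model of $S/H$ is a conic bundle over $\Pro^1_\ka$ on which $G/H$ acts effectively on the base. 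Thus the theorem follows once we know that $S/H$ is $\ka$-rational.

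\emph{The main obstacle.} Establishing this is, I expect, the technical core of the paper. Writing $S/H=(S/G_F)/\bar R$, it amounts to two statements: the quotient of a $\ka$-rational conic bundle over $\Pro^1_\ka$ by a group of fibrewise automorphisms is $\ka$-rational, and the quotient of a $\ka$-rational conic bundle over $\Pro^1_\ka$ by a group whose action on the base has odd order is $\ka$-rational. Both quotients are geometrically rational by Theorem~\ref{Castelnuovo} and $\ka$-unirational (they are dominated by the $\ka$-rational surface $S$), but over a non-closed field a $\ka$-unirational conic bundle need not be $\ka$-rational, so there is something to prove. I would prove it by constructing an explicit smooth relatively minimal model of each quotient and analysing its degenerate fibres: the obstruction to $\ka$-rationality of a minimal conic bundle is governed by the $2$-torsion data attached to its singular fibres, a fibrewise quotient can be studied one fibre at a time, and a quotient by an odd-order action on the base changes the relevant Brauer and Galois data only through a base cover of odd degree, hence trivially, since that data is $2$-torsion. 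Carrying this out — and bookkeeping the group actions on the minimal models so that the residual group on the final conic bundle really is the one from the list — is where the genuine work lies.
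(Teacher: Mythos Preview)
Your skeleton matches the paper's proof: run the $G$-MMP, dispose of the del Pezzo case, and on a conic bundle quotient first by the fibrewise subgroup $G_F$ and then by the maximal odd cyclic normal subgroup $N\trianglelefteq G_B$, leaving one of the listed groups acting faithfully on the new base. You have also correctly located the two technical assertions that carry the weight.

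Where your plan diverges is in the mechanism for the odd-order step. Your Brauer/2-torsion heuristic is suggestive but hard to run in the direction you need: the map $X\to X/N$ is not a base change of conic bundles, so there is no direct restriction--corestriction argument available. The paper's actual argument (its Lemma~\ref{oddorbits} feeding into Lemma~\ref{oddconic}) is much more elementary and avoids the Brauer group entirely. Since $G\subset\Aut_{\ka}(X)$, the actions of $G$ and $\Gamma=\Gal(\kka/\ka)$ on $\XX$ commute; so if some $g\gamma$ with $g\in N$ swaps the two components $E_1,E_2$ of a singular fibre, then $(g\gamma)^{\ord g}=\gamma^{\ord g}\in\Gamma$ also swaps them, because $\ord g$ is odd. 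Hence every singular fibre on a relative minimal model of $X/N$ whose components are $\Gamma$-conjugate already arises from such a fibre on $X$; as $X$ is $\ka$-rational there are at most three of these, and $X/N$ is $\ka$-rational by Corollary~\ref{Cbundleratcrit}. The fibrewise step is handled by the same bookkeeping: smooth fibres of $X$ give smooth (nonsingular-point) fibres of $X/G_F$, and since $G_F$ commutes with $\Gamma$ the $\Gamma$-swap status of the remaining reducible fibres is inherited from $X$, so again at most three survive on the relative minimal model. In short, your outline is correct, but the ``genuine work'' you anticipate is a two-line parity trick rather than a cohomological argument.
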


\begin{theorem}
\label{Oddquotient}
Let $\ka$ be a field of characteristic zero, $X$ be a $\ka$-rational surface and $G$ be a finite group acting on $X$. Assume that $|G|$ is odd, $|G| > 10$ and $|G| \ne 15$. If $G$ is cyclic or $|G|$ is not divisible by $3$ then $X / G$ is $\ka$-rational.
\end{theorem}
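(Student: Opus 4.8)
The plan is to run the $G$-equivariant minimal model program on $X$ and then apply Theorem~\ref{Cbundleclasses}. After replacing $X$ by a $\ka$-birationally equivalent $G$-surface we may assume $X$ is $G$-minimal, hence either a $\ka$-rational del Pezzo surface or a $\ka$-rational conic bundle over $\Pro^1_{\ka}$, and $X/G$ is unchanged up to $\ka$-birational equivalence. Since $|G|$ is odd, the image of $G$ in $\mathrm{GL}\!\big(\Pic(\XX)\big)$ has odd order too, and this, together with the hypotheses $|G|>10$, $|G|\ne 15$ and ``$G$ cyclic or $3\nmid|G|$'', is what will cut down the admissible pairs $(X,G)$.

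\textbf{Conic bundle case.} Suppose $X\to\Pro^1_{\ka}$ is a $G$-minimal conic bundle, and let $G_B\subseteq\Aut(\Pro^1_{\ka})$ be the image of $G$ on the base and $G_F$ the fibrewise part. Then $|G_B|$ is odd (it divides $|G|$), and the only finite subgroups of $\mathrm{PGL}_2(\kka)$ of odd order are cyclic, so $G_B\cong\CG_n$ with $n$ odd. Quotienting first by $G_F$, one is reduced to showing that the quotient of a $\ka$-rational conic bundle by a cyclic group acting on the base by an odd-order transformation is $\ka$-rational; I would obtain this from the conic bundle analysis carried out for Theorem~\ref{Cbundleclasses}, the mechanism being that $\CG_n$ fixes a length-$2$ subscheme of the base, the two special fibres over it can be separated, the quotient fibration thereby acquires a rational section, and $\ka$-rationality follows. (This fits Theorem~\ref{Cbundleclasses}: the only odd-order group among $\CG_{2^k}$, $\DG_{2^k}$, $\AG_4$, $\SG_4$, $\AG_5$ is the trivial one, so in the odd case the conic-bundle alternative returns the conic bundle itself.)

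\textbf{Del Pezzo case.} Suppose $X$ is a $G$-minimal $\ka$-rational del Pezzo surface of degree $d=K_X^2$, and argue degree by degree using the classifications of finite automorphism groups of del Pezzo surfaces and of their actions on $\Pic(\XX)$. The degree $d=7$ does not occur for a $G$-minimal surface. For $d\in\{4,5\}$ the odd-order subgroups of $\Aut(\XX)$ have order at most $5$, so $|G|>10$ excludes these degrees. For $d=9$ and $d=8$ the surface is $\Pro^2$ or a quadric; under the hypotheses the admissible $G$ turns out to be abelian and, after conjugation, to lie in a maximal torus (for $d=8$ an odd-order group cannot interchange the rulings), so $X/G$ is a $\ka$-rational toric surface. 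For $d=6$ one has $\Aut(\XX)=(\kka^{*})^2\rtimes(\SG_3\times\CG_2)$: if $3\nmid|G|$ then $G$ lies in the torus, $X$ is toric and $X/G$ is a $\ka$-rational toric surface, while if $G$ is cyclic one treats $\CG_n\subset(\kka^{*})^2\rtimes\CG_3$ directly. For $d\le 3$ the odd-order subgroups of $\Aut(\XX)$ are $3$-groups (for instance inside $\CG_3^3\rtimes\SG_4$ on a cubic surface) together with small cyclic groups; once $3\mid|G|$ the hypothesis forces $G$ cyclic, and I would finish either by producing a $G$-equivariant birational map to a conic bundle with a rational section, falling back to the first case, or by an explicit rational parametrisation of the quotient.

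\textbf{Where the difficulty lies.} The hard part is the low-degree del Pezzo case, $d\in\{1,2,3\}$ and $d=6$: here genuinely large odd-order groups act --- $3$-subgroups of order up to $81$ on the Fermat cubic, and $\CG_{15}$ on certain del Pezzo surfaces of degrees $1$ and $6$ --- and for non-cyclic $3$-groups and for $\CG_{15}$ the quotient can fail to be $\ka$-rational. Proving $\ka$-rationality for the configurations that survive, and verifying that no such construction exists for the rest, is exactly what pins down the exceptions $|G|\le 10$, $|G|=15$ and ``$G$ non-cyclic with $3\mid|G|$''. A subsidiary point requiring care is the statement used in the conic bundle case, that quotients of $\ka$-rational conic bundles by odd-order cyclic base actions are $\ka$-rational over an \emph{arbitrary} field of characteristic zero, where non-split fibres and quadratic twists of the fixed points of $\CG_n$ must be controlled.
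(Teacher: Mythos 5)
Your overall skeleton (pass to a $G$-minimal model, split into the conic bundle case and the del Pezzo case, use torus/toric arguments in high degree) matches the paper, but the decisive step is missing. The paper disposes of del Pezzo surfaces of degree $\leqslant 5$ purely group-theoretically: by the classification of automorphism groups of del Pezzo surfaces (\cite{DI1}, \cite{Pr13}), any odd-order group with $|G|>9$, $|G|\ne 15$ acting on such a surface is non-cyclic with $|G|$ divisible by $3$, so the hypotheses exclude these degrees altogether and no rationality argument is needed there. You instead leave exactly this point open: you assert that for $d\leqslant 3$ some cyclic $3$-groups ``survive'' and propose to finish ``either by producing a $G$-equivariant birational map to a conic bundle with a rational section \ldots or by an explicit rational parametrisation of the quotient.'' Neither fallback is carried out, and the first is implausible as stated: a $G$-minimal del Pezzo surface with $K_X^2\leqslant 3$ is not birationally equivalent to a conic bundle (cf.\ Remark~\ref{Galoisunirat} and the rigidity results behind it), so this route would fail. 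Your own closing paragraph concedes that pinning down the exceptions $|G|\leqslant 10$, $|G|=15$, ``non-cyclic with $3\mid |G|$'' is the hard part --- but that is precisely the content of the theorem in low degree, so as written the proposal has a genuine gap rather than a proof.

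Two smaller points. In the conic bundle case the mechanism you describe (the quotient fibration ``acquires a rational section'') is not how the argument goes and is not established; the actual argument (Lemma~\ref{oddconic}, feeding into Theorem~\ref{Cbundleclasses}) is that quotienting by an odd-order group acting on the base does not increase the number of singular fibres whose components are interchanged by $\Gal(\kka/\ka)$, so the relative minimal model of the quotient still has $K^2\geqslant 5$ and a $\ka$-point, and rationality follows from Corollary~\ref{Cbundleratcrit}; your net conclusion is right but for a reason you have not supplied. In the degree $\geqslant 6$ case, ``$G$ lies in the torus, hence $X/G$ is a $\ka$-rational toric surface'' skips the needed steps over a non-closed field: one must invoke Lemma~\ref{toric} to see that $X/G$ is a $\ka$-form of a toric surface, observe that it has a $\ka$-point (being dominated by the $\ka$-rational $X$), note that a minimal model of such a form is a del Pezzo surface of degree $\geqslant 6$ or a conic bundle with at most two singular fibres, and only then conclude $\ka$-rationality from Theorem~\ref{ratcrit}; a form of a toric surface is not automatically $\ka$-rational.
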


In terms of fields of invariants Theorem \ref{Oddquotient} can be written in the following way.

\begin{theorem}
\label{Oddinvariant}
Let $\ka$ be a field of characteristic zero, $K = \ka(x , y)$ and $G$ be a finite group acting on $K$ and preserving $\ka$. Assume that $|G|$ is odd, $|G| > 10$ and $|G| \ne 15$. If $G$ is cyclic or $|G|$ is not divisible by $3$ then $K^G$ is purely transcendental extension of $\ka$.
\end{theorem}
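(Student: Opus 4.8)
The plan is to deduce Theorem~\ref{Oddinvariant} from Theorem~\ref{Oddquotient} via the standard dictionary between fields of invariants and quotient varieties, so almost all of the content is already contained in the geometric statement. First I would translate the field-theoretic data into geometry: the field $K = \ka(x,y)$ is the field of rational functions of $\Pro^2_{\ka}$, which is a $\ka$-rational surface, and a faithful action of the finite group $G$ on $K$ fixing $\ka$ pointwise is precisely a faithful birational action of $G$ on $\Pro^2_{\ka}$ over $\ka$. (If the given action on $K$ is not assumed faithful one replaces $G$ by its image in $\Aut_{\ka}(K)$; I will simply take the action to be faithful, as is standard.)

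Next I would regularize this birational action. In characteristic zero this is routine: passing to the closure of the graph of the $G$-action in a suitable product of projective spaces, normalizing, and then applying $G$-equivariant resolution of singularities (the functorial resolution of Hironaka, Bierstone--Milman, etc., which automatically commutes with the $G$-action), one obtains a smooth projective surface $X$ carrying a regular faithful action of $G$ together with a $G$-equivariant isomorphism $\ka(X) \cong K$ of fields over $\ka$. Since $\ka(X) \cong \ka(x,y)$ is purely transcendental, $X$ is a $\ka$-rational surface by definition. Now Theorem~\ref{Oddquotient} applies to the pair $(X,G)$: the hypotheses that $|G|$ is odd, $|G| > 10$, $|G| \ne 15$, and that $G$ is cyclic or $|G|$ is not divisible by $3$, are exactly those of that theorem, so $X/G$ is $\ka$-rational. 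Finally, since $G$ acts faithfully and $X$ is smooth projective, the function field of the quotient is the field of invariants, $\ka(X/G) = \ka(X)^G = K^G$, and $\ka$-rationality of $X/G$ means precisely that this field is a purely transcendental extension of $\ka$ (necessarily of transcendence degree $2$, as $K/K^G$ is finite).

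There is essentially no genuine obstacle here beyond Theorem~\ref{Oddquotient} itself, which does the real work; the only points requiring (standard) care are the regularization of the birational $G$-action — ensuring one lands on a \emph{smooth projective} model with a \emph{regular} action while keeping $\ka$-rationality, which is where one invokes functoriality of resolution in characteristic zero — and the identification of $\ka$-rationality of a quotient surface with pure transcendence of the corresponding invariant subfield. I would state the regularization step as a short lemma (or cite it) and otherwise keep the proof to a couple of lines.
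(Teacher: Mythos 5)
Your proposal is correct and coincides with the paper's own treatment: the paper presents Theorem~\ref{Oddinvariant} simply as a reformulation of Theorem~\ref{Oddquotient} via the standard dictionary between finite group actions on $\ka(x,y)$ and regularized actions on a smooth projective $\ka$-rational surface (cf.\ Remark~\ref{Unfields}), which is exactly the regularization-plus-$\ka(X/G)=K^G$ argument you spell out.
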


The next proposition generalizes particular case of \cite[Chapter~4]{Isk96} (see also Theorem \ref{ratcrit} below) for nontrivial groups $G$.

\begin{proposition}
\label{RCbundle}
If $X$ is a $\ka$-rational $G$-equivariant conic bundle and $K_X^2 \geqslant 5$ then $X / G$ is $\ka$-rational.
\end{proposition}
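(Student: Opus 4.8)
The plan is to pass, via the $G$-equivariant minimal model program, to a $G$-minimal model, and then to use that $K_X^2\geqslant 5$ forces the conic bundle to have at most $8-K_X^2\leqslant 3$ degenerate geometric fibres. Running the $G$-MMP on $X$, each step contracts a $G$-orbit of $(-1)$-curves, so $K^2$ does not decrease; we arrive at a $G$-minimal surface $S$ that is $\ka$-birational to $X$ (hence $\ka$-rational), with $K_S^2\geqslant 5$ and with $S/G$ $\ka$-birational to $X/G$. If $S$ is a del Pezzo surface, its degree is at least $5$, and $S/G$ is $\ka$-rational by the known rationality of quotients of $\ka$-rational del Pezzo surfaces of degree $\geqslant 5$; so from now on $S$ is a $G$-minimal conic bundle over $\Pro^1_{\ka}$ (the base is $\Pro^1_{\ka}$ since $S$, being $\ka$-rational, has a $\ka$-point, whose image on the base is a $\ka$-point). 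As $X/G$, and therefore $S/G$, automatically has a $\ka$-point, it will suffice, by Theorem~\ref{ratcrit} together with the classical fact that a del Pezzo surface of degree $\geqslant 5$ with a $\ka$-point is $\ka$-rational, to exhibit a minimal model of $S/G$ which is a conic bundle with $K^2\geqslant 5$ or a del Pezzo surface of degree $\geqslant 5$.

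Next I would split off the fibrewise part of the action. Write $1\to G_K\to G\to G_B\to 1$, where $G_B\subseteq\operatorname{PGL}_2(\ka)=\Aut(\Pro^1_{\ka})$ is the image of the $G$-action on the base and $G_K$ acts along the fibres. On the generic fibre, a conic $C$ over $\ka(t)=\ka(\Pro^1)$, the group $G_K$ acts by $\ka(t)$-automorphisms, so $C/G_K$ is again a geometrically rational curve, i.e.\ a conic, over $\ka(t)$; hence $S/G_K$ is $\ka$-birational to a conic bundle $S'\to\Pro^1_{\ka}$. The key point is that this does not increase the number of degenerate fibres: since $G_K$ acts trivially on the base, the stabilizer in $G_K$ of any point acts trivially on the tangent direction normal to the fibre, so over the open set where $S$ has smooth fibres the quotient $S/G_K$ is smooth with smooth fibres; thus new reducible fibres can appear only over the $\leqslant 3$ points lying below a reducible fibre of $S$, and after relative minimalization each such fibre contributes at most one reducible fibre to $S'$. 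Therefore $K_{S'}^2\geqslant K_S^2\geqslant 5$, and since $S'$ has a $\ka$-point it is $\ka$-rational by Theorem~\ref{ratcrit}; moreover $G_B$ acts on $S'$ biregularly and faithfully on the base, and $S/G$ is $\ka$-birational to $S'/G_B$. Replacing $(S,G)$ by $(S',G_B)$, we may therefore assume $G\subseteq\operatorname{PGL}_2(\ka)$ acts faithfully on the base, and then $\Pro^1_{\ka}/G\cong\Pro^1_{\ka}$ by L\"uroth's theorem (Theorem~\ref{Luroth}).

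Now $G$ preserves the degeneracy locus, which consists of at most $3$ geometric points of $\Pro^1_{\ka}$. If it is empty, then $S\cong\mathbb{F}_m$; its negative section when $m\geqslant 1$, or a suitable ruling section when $m=0$, is $G$-invariant and maps isomorphically onto the base, so its image is a section of $S/G\to\Pro^1_{\ka}$, whence $S/G$ is $\ka$-birational to a $\Pro^1$-bundle over $\Pro^1_{\ka}$ and is $\ka$-rational (in the remaining subcase $m=0$ with no $G$-invariant section one appeals instead to the known rationality of quotients of $\Pro^1_{\ka}\times\Pro^1_{\ka}$ by finite groups). If the degeneracy locus is nonempty, then $G$ must be cyclic or dihedral, since $\AG_4$, $\SG_4$ and $\AG_5$ admit no faithful action on a set of at most $3$ points; in this case I would go through the finitely many configurations of the $1$, $2$ or $3$ reducible fibres together with the Galois action on their components, and in each one either produce a $G$-invariant section, or a $G$-invariant pair of conjugate sections, realizing $S/G$ birationally as a $\Pro^1$-bundle over $\Pro^1_{\ka}$, or contract fibre components $G$-equivariantly to reduce to the case $S\cong\mathbb{F}_m$ already treated; $\ka$-rationality of $S/G$ then follows from Theorem~\ref{ratcrit}.

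I expect the main obstacle to lie in the last two steps: checking precisely that passing to the quotient by the fibrewise subgroup $G_K$ creates no extra degenerate fibres, so that the bound $K^2\geqslant 5$ survives, and then the configuration-by-configuration analysis for cyclic and dihedral $G$ acting on a conic bundle with one, two or three degenerate fibres, where the $\ka$-birational type of $S/G$ depends sensitively on how the Galois group permutes the components of the reducible fibres.
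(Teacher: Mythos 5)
Your overall skeleton agrees with the paper up to a point: splitting $G$ into the fibrewise part and the part $G_B$ acting on the base, and checking that the quotient by the fibrewise subgroup is again a conic bundle over $B$ with no new degenerate fibres, is exactly the paper's Lemma~\ref{fibrequotient}, and your local argument for it (the stabilizer acts as a reflection along the fibre, trivially in the base direction) is the paper's. After that, however, there are genuine gaps. First, by running the \emph{absolute} $G$-MMP you allow the model to become a del Pezzo surface (and, in the $\F_0$ subcase, a quotient of $\Pro^1_{\ka}\times\Pro^1_{\ka}$), and you then invoke ``known'' rationality of quotients of $\ka$-rational del Pezzo surfaces of degree $\geqslant 5$ and of $\Pro^1_{\ka}\times\Pro^1_{\ka}$ by arbitrary finite groups. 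Neither result is proved or cited in this paper (the introduction explicitly postpones del Pezzo quotients to later work), and they are not trivial; the paper avoids needing them by working only with \emph{relative} MMP-reductions over $B$ and $B/G$, so that the conic bundle structure is never lost and Corollary~\ref{Cbundleratcrit} applies directly.

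Second, and more seriously, the heart of the statement --- that when $G=G_B$ acts faithfully on the base and $X$ has at most $3$ singular geometric fibres, some relative model of $X/G$ again has at most $3$ singular fibres (equivalently $K^2\geqslant 5$) --- is only promised as a ``configuration-by-configuration analysis,'' and the tools you propose for it are not adequate. A $G$-invariant pair of Galois-conjugate sections of $S\to B$ only yields a $2$-section of $S/G\to B/G$, which does not give the generic fibre a rational point; ``contracting fibre components $G$-equivariantly'' over $\ka$ is impossible precisely when the bundle is relatively $G$-minimal; and the whole difficulty is to control whether $\Gal\left(\kka/\ka\right)$ alone permutes the components of the image fibres, which depends on the local action at fixed points. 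This is exactly what the paper supplies in Lemmas~\ref{permutation}, \ref{eveninvariantfibre}, \ref{quotientoffibre} and Proposition~\ref{basequotient} (via a toric resolution computation on invariant fibres), from which Theorem~\ref{Cbundle}(i) and then the Proposition follow in two lines; nothing in your plan substitutes for that analysis. A smaller fixable point: excluding $\AG_4$, $\SG_4$, $\AG_5$ because they ``admit no faithful action on a set of at most $3$ points'' is not quite right, since $G$ need not act faithfully on the degeneracy locus; the correct reason is Lemma~\ref{orbits}: every nonempty orbit of these groups on $\Pro^1_{\kka}$ has length at least $4$.
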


We need the following definition (see \cite{Al94}) to formulate the main result of this paper.

\begin{definition}
\label{birunboundness}
A class $\mathfrak{V}$ of varieties is \textit{$\ka$-birationally bounded} if there is a morphism $\varphi: \mathcal{X} \rightarrow \mathcal{S}$ between algebraic schemes of finite type such that every member of $\mathfrak{V}$ is $\ka$-birationally equivalent to one of the geometric fibres of $\varphi$. We say that $\mathfrak{V}$ is \textit{$\ka$-birationally unbounded} if it is not $\ka$-birationally bounded.
\end{definition}

There are many examples of birationally unbounded classes of varieties of dimension $3$ or greater over $\CC$. In particular, for $n \geqslant 3$ smooth $n$-dimensional varieties admitting a conic bundle structure are birationally unbounded (see \cite[Theorem 1.5]{Ok10}).

We show that for some field $\ka$ Galois $\ka$-unirational surfaces are $\ka$-birationally unbounded. Quotients of del Pezzo surfaces are $\ka$-birationally bounded since $G$-minimal del Pezzo surfaces are $\ka$-birationally bounded. Therefore we consider quotients of conic bundles.

\begin{theorem}
\label{Unboundness}
Let $\ka$ be a field of characteristic zero such that not all elements of $\ka$ are squares and $G$ be a finite group of automorphisms of $\Pro^1_{\ka}$. Then the class of $G$-quotients of $\ka$-rational conic bundles is $\ka$-birationally unbounded in the following cases:

\begin{itemize}

\item If $\ka$ contains $\xi_k = e^{\frac{2\pi i}{k}}$ and $G$ is a cyclic group $\CG_{2k}$ of order $2k$;

\item If $\ka$ contains $\cos \frac{2\pi}{k}$ and $G$ is a dihedral group $\DG_{2k}$ of order $2k$;

\item If $\ka$ contains $i$ and $G$ is an alternating group $\AG_4$ of degree $4$;

\item If $\ka$ contains $i$ or $i\sqrt{2}$ and $G$ is a symmetric group $\SG_4$ of degree~$4$;

\item If $\ka$ contains $i$ and $\sqrt{5}$ and $G$ is an alternating group $\AG_5$ of degree $5$.

\end{itemize}

\end{theorem}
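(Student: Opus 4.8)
The plan is to exhibit, for each of the listed groups $G \subset \Aut(\Pro^1_\ka)$, an infinite family of $G$-equivariant $\ka$-rational conic bundles whose quotients are pairwise non-$\ka$-birationally equivalent, and to detect the non-equivalence through a discrete invariant that takes infinitely many values. The natural source of such bundles is the following: start from $\Pro^1_\ka$ with the given $G$-action, pick a $G$-invariant divisor $D \subset \Pro^1_\ka$ (a union of orbits) of even degree, choose a non-square $a \in \ka^*$, and build the conic bundle $S \to \Pro^1_\ka$ whose fibre over a point of $D$ degenerates and which is "twisted by $a$" — concretely, the bundle $\{y^2 - a z^2 = f_D(x) w^2\}$ inside a $\Pro^1$-bundle over $\Pro^1_x$, where $f_D$ cuts out $D$. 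Because $G$ acts on the base preserving $D$, and $a \in \ka$ is $G$-fixed, the $G$-action lifts to $S$; since $a$ is not a square, $S$ is not $\ka$-rational as a surface, but it is geometrically rational, and one checks it is $\ka$-rational (unirational suffices for the statement, but rationality is what "$\ka$-rational conic bundle" in the theorem requires — this is where one uses that the bundle has a $\ka$-point, e.g. over a point outside $D$ where the conic is $y^2 - az^2 = w^2$, which has the $\ka$-point $[1:0:1]$).

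The main step is then to run the $G$-equivariant minimal model program on $S$, or rather directly on the quotient: by the machinery developed earlier in the paper (the same that yields Theorem~\ref{Cbundleclasses} and Proposition~\ref{RCbundle}), the quotient $S/G$ has a smooth minimal model $Y$ which is again a conic bundle over $\Pro^1_\ka$ — here one uses that $G$ acts effectively on the base, so the quotient map respects the fibration and $Y \to \Pro^1_\ka = \Pro^1_\ka/G$ is a conic bundle — and one computes the number of degenerate fibres of $Y$, equivalently $8 - K_Y^2$, in terms of $\deg D$ and the orbit structure of $G$ on $D$. The point is that as $D$ ranges over larger and larger $G$-invariant divisors, $K_Y^2 \to -\infty$. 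If one instead tracks the finer invariant (the degenerate fibres of $Y$ together with the residual quadratic extensions over which they split, or the Galois action on the components), one even gets non-equivalence for fixed $K_Y^2$ when desired, but for birational unboundedness the divergence of $K_Y^2$ already suffices: a birationally bounded family of surfaces has $K^2$ of its minimal conic-bundle models bounded below (the total space of $\varphi$ has finitely many components, each of bounded Picard rank over its generic point), contradiction.

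The hard part will be the minimal model program computation: showing that after passing to the quotient and running $G$-equivariant MMP one genuinely lands on a conic bundle with the predicted number of degenerate fibres, rather than on a del Pezzo surface or on a conic bundle that has been "simplified" by contractions that destroy the unboundedness. One must verify that the twist by the non-square $a$ is preserved — i.e. that $Y$ is still not $\ka$-rational, so that it cannot be contracted all the way down to $\Pro^2_\ka$ or a minimal del Pezzo of large degree — and this is exactly where the hypothesis "not all elements of $\ka$ are squares" enters, guaranteeing the twist is available and nontrivial. The case-by-case nature (five groups) means one must also check, for $\AG_4, \SG_4, \AG_5$, that the specified small extensions ($i$, $i\sqrt 2$, $\sqrt 5$) are exactly what is needed for the $G$-action to lift to the twisted bundle and for the minimal model to remain a conic bundle; for the cyclic and dihedral cases the analogous role is played by $\xi_k$ and $\cos\frac{2\pi}{k}$, which are precisely the conditions for $\CG_{2k}$ resp. $\DG_{2k}$ to act on $\Pro^1_\ka$ over $\ka$ with the standard diagonal/rotation normal form. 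I expect the bookkeeping of degenerate fibres under the quotient — counting ramification of $S \to S/G$ over the fixed locus and applying Hurwitz-type relations on the base — to be the technically heaviest ingredient, but conceptually routine given the earlier sections.
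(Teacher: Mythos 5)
Your construction has a fatal flaw at exactly the point you gloss over: the total space $S=\{y^2-az^2=f_D(x)w^2\}$ is \emph{not} a $\ka$-rational conic bundle once $\deg D>3$, so your family consists of quotients of non-$\ka$-rational surfaces and says nothing about the class in the theorem. Indeed, each degenerate fibre of $S$ splits into two lines $y=\pm\sqrt{a}\,z$ that are interchanged by $\Gal\bigl(\ka(\sqrt a)/\ka\bigr)$, so $S$ is already relatively minimal over the base with $\deg D$ singular fibres and $K_S^2=8-\deg D$; by the Iskovskikh criterion (Theorem \ref{ratcrit}, Corollary \ref{Cbundleratcrit}) a $\ka$-point does \emph{not} give $\ka$-rationality when $K^2<5$ — your parenthetical claim that the point $[1:0:1]$ over a fibre outside $D$ makes $S$ $\ka$-rational is false, and ``unirational suffices'' is not available since the theorem is precisely about quotients of $\ka$-rational bundles (this is the whole point: producing Galois $\ka$-unirational surfaces). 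Since unboundedness forces $\deg D\to\infty$, you cannot have both many degenerate fibres and $\ka$-rationality of $S$ with a twist that sits inside each fibre of $S$.

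The missing idea, which is the heart of the paper's Example \ref{keyexample} and Proposition \ref{ratex}, is to make the Galois conjugation that swaps the components of a degenerate fibre appear only \emph{after} the quotient, by interlacing it with the $G$-action: one places the degenerate fibres of $X$ over the points $(\mu_i\sqrt[l]{u}:1)$, whose Galois conjugates lie in a single orbit of an even-order element $g\in G$ acting as $\lambda\mapsto\xi_l\lambda$, and twists by $B/C=-u$. Then the Galois generator $\gamma$ permutes \emph{distinct} singular fibres of $X$ (so one can contract a component in each fibre $\Gal$-equivariantly and conclude, with a $\ka$-point, that $X$ is $\ka$-rational), while only the combination $g\gamma$ swaps the two components of a given fibre — case (2) of Proposition \ref{orbitpossibility}; on the quotient the whole orbit collapses to one fibre $By^2+Cz^2=0$ over a $\ka$-point of $B/G$, whose components are swapped by $\gamma$ because $u$ is not a square. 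This is also where the hypotheses on $\ka$ really enter: not merely so that $G$ acts over $\ka$, but so that $G\subset\mathrm{PGL}_2(\ka)$ contains an order-$2$ (or order-$2k$) element with $\ka$-rational fixed points (Lemmas \ref{Crepr}--\ref{A5repr}), which the normal form $g(\lambda:1)=(\xi_l\lambda:1)$ requires. Finally, your detection of unboundedness via ``bounded families have bounded $K^2$ of minimal conic-bundle models'' is only a sketch and would need a semicontinuity argument; the paper avoids it by the rigidity Lemma \ref{rigidbundle}, which pins down the $n$ branch points up to $\mathrm{PGL}_2(\ka)$ and yields an $(n-3)$-dimensional family of distinct $\ka$-birational types, contradicting any bounding family of fixed dimension.
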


\begin{remark}
\label{Unfields}
Note that quotients of $\ka$-rational surfaces by a group $G$ up to $\ka$-birational equivalence bijectively correspond to fields of invariants $\ka(x,y)^G$ up to isomorphism.
\end{remark}

\begin{corollary}
\label{Uncremona}
In the assumptions of Theorem \ref{Unboundness} there are infinitely many conjugacy classes of subgroups isomorphic to $G$ in $\operatorname{Bir} \Pro^2_{\ka}$.
\end{corollary}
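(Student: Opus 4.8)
The plan is to deduce the corollary formally from Theorem~\ref{Unboundness} and Remark~\ref{Unfields} by working with fixed subfields of $K = \ka(x,y)$. Recall that $\operatorname{Bir}\Pro^2_{\ka}$ is canonically isomorphic to the group $\Aut_{\ka}(K)$ of $\ka$-automorphisms of $K$. If $H \subset \Aut_{\ka}(K)$ is a finite subgroup isomorphic to $G$, it has a fixed field $K^H \supset \ka$, and for $\sigma \in \Aut_{\ka}(K)$ one has $K^{\sigma H \sigma^{-1}} = \sigma(K^H)$, so conjugate subgroups have isomorphic fixed fields. Thus passing to fixed fields gives a well-defined map from the set of conjugacy classes of subgroups of $\operatorname{Bir}\Pro^2_{\ka}$ isomorphic to $G$ to the set of isomorphism classes of extensions $K^H/\ka$, and it is enough to show that the image of this map is infinite.

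First I would match the two sides. If $X$ is a $\ka$-rational conic bundle carrying a faithful action of $G$, then choosing a $\ka$-isomorphism $\ka(X) \cong K$ converts the action into an embedding $G \hookrightarrow \Aut_{\ka}(K)$ with image $H \cong G$ and $K^H \cong \ka(X)^G$. By Remark~\ref{Unfields} the isomorphism class of the field $\ka(X)^G = \ka(X/G)$ is exactly the $\ka$-birational class of the quotient $X/G$. Hence every $\ka$-birational class of $G$-quotients of $\ka$-rational conic bundles occurs in the image of the map above.

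Then I would invoke Theorem~\ref{Unboundness}: under its hypotheses the class of $G$-quotients of $\ka$-rational conic bundles is $\ka$-birationally unbounded. If this class contained only finitely many $\ka$-birational classes, it would be $\ka$-birationally bounded --- take $\mathcal{S}$ a finite set of closed points and $\mathcal{X} \to \mathcal{S}$ a disjoint union of representatives, which is of finite type. So there are infinitely many pairwise non-$\ka$-birationally equivalent quotients $X/G$, hence infinitely many isomorphism classes of fixed fields in the image of the map, hence infinitely many conjugacy classes of subgroups isomorphic to $G$ in $\operatorname{Bir}\Pro^2_{\ka}$.

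This argument is essentially formal, and the only delicate point is the one already handled above: two subgroups $H$ and $H'$ that are conjugate inside $\operatorname{Bir}\Pro^2_{\ka}$ act only birationally on $\Pro^2_{\ka}$, so a priori one cannot speak of their quotients without first regularizing the actions and resolving indeterminacy equivariantly. Phrasing everything in terms of fixed subfields of $K$ avoids this and makes the implication ``conjugate subgroups $\Rightarrow$ birationally equivalent quotients'' a one-line computation; this is the reason I would carry out the proof on the level of function fields rather than on the level of surfaces.
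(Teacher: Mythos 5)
Your argument is correct, and it reaches the same conclusion by the same overall strategy as the paper (conjugate subgroups yield $\ka$-birationally equivalent quotients, and Theorem~\ref{Unboundness} supplies infinitely many non-equivalent quotients, hence infinitely many conjugacy classes), but the key implication is implemented differently. The paper works on the level of surfaces: it regularizes the two actions on $\ka$-rational surfaces $X_1$, $X_2$ and invokes the geometric characterization of conjugacy in $\operatorname{Bir} \Pro^2_{\ka}$ via a common model $X$ with two equivariant birational morphisms $\pi_i: X \rightarrow X_i$ on which the actions coincide, giving $X_1/G_1 \approx X/G_1 \approx X/G_2 \approx X_2/G_2$. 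You instead work on the level of function fields, identifying $\operatorname{Bir} \Pro^2_{\ka}$ with $\Aut_{\ka}\left(\ka(x,y)\right)$ and using the one-line computation $K^{\sigma H \sigma^{-1}} = \sigma\left(K^{H}\right)$ together with Remark~\ref{Unfields}; this buys you a proof that avoids any appeal to equivariant regularization and resolution of indeterminacy, which is exactly the delicate point you flag, while the paper's version makes the geometric picture (common model, equivariant morphisms) explicit. Your explicit remark that finitely many $\ka$-birational classes would force $\ka$-birational boundedness (a disjoint union of representatives over a finite base) is a step the paper leaves implicit, and it is handled correctly. Both arguments also need, and you state, that every quotient produced in Theorem~\ref{Unboundness} actually arises from some subgroup of $\operatorname{Bir} \Pro^2_{\ka}$ isomorphic to $G$, which follows since the $G$-action on a $\ka$-rational conic bundle induces an embedding $G \hookrightarrow \Aut_{\ka}\left(\ka(x,y)\right)$ with invariant field $\ka(X/G)$.
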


The plan of this paper as follows.

In Section 2 we review main notions of minimal model program, facts about conic bundles and rationality.

In Section 3 we review some facts about finite subgroups of $\mathrm{PGL}_2 \left( \ka \right)$ and fixed points of these groups on $\Pro^1_{\ka}$.

In Section 4 we study how the geometric group $G$ acts on a \mbox{$G$-minimal} conic bundle $X$. As a by-product we prove Proposition~\ref{RCbundle}.

In Section 5 we study the action of the Galois group $\Gal\left( \kka / \ka \right)$ on singular fibres of conic bundles and connection of this action with the action of the geometric group $G$. Also we prove Theorem \ref{Cbundleclasses} and Theorem \ref{Oddquotient}.

In Section 6 we construct examples of non-$\ka$-rational quotients for all groups listed in Theorem \ref{Unboundness} and prove Corollary~\ref{Uncremona}.

The author is grateful to his adviser Yu.\,G.\,Prokhorov and to C.\,A.\,Shramov for useful discussions. Also the author would like to thank the reviewers of this paper for many useful comments.

We use the following notation.

\begin{itemize}

\item $\ka$ denotes an arbitrary field of characteristic zero,

\item $\kka$ denotes the algebraic closure of a field $\ka$,

\item $\XX = X \otimes \kka$,

\item $|G|$ denotes the order of a group $G$,

\item $\ord g$ denotes the order of an element $g \in G$,

\item $\CG_n$ denotes the cyclic group of order $n$,

\item $\mathfrak{D}_{2n}$ denotes the dihedral group of order $2n$, in particular \mbox{$\DG_4 \cong \CG_2^2$},

\item $\mathfrak{S}_n$ denotes the symmetic group of degree $n$,

\item $\mathfrak{A}_n$ denotes the alternating group of degree $n$,

\item $(i_1 i_2 \ldots i_j)$ denotes a cyclic permutation of $i_1$, \ldots, $i_j$,

\item $\langle g_1, \ldots , g_k \rangle$ denotes the group generated by $g_1$, \ldots, $g_k$,

\item $\operatorname{diag}(a, b) = \mmatr{a}{0}{0}{b}$,

\item $K_X$ denotes the canonical divisor of a variety $X$,

\item $\F_n$ denotes the rational ruled (Hirzebruch) surface $\Pro_{\Pro^1}(\mathcal{O}~\oplus~\mathcal{O}(n))$,

\item $X \approx Y$ means that $X$ and $Y$ are $\ka$-birationally equivalent,

\item $T_p X$ denotes the Zariski tangent space of $X$ at a point $p$,

\item $\xi_k = e^{\frac{2\pi}{k}}$.

\end{itemize}

\section{Preliminaries}

In this section we review main notions and results of the $G$-equivariant minimal model program following the papers \cite{Man67}, \cite{Isk79}, \cite{DI1}, \cite{DI2}. Throughout this section $G$ is a finite group.

\begin{definition}
\label{rationality}
A {\it $\ka$-rational surface} $X$ is a surface over $\ka$ such that $X$ is birationally equivalent to $\Pro^2_{\ka}$.

We say that $X$ is \textit{rational} if $\XX = X \otimes \kka$ is $\kka$-rational.

A surface $X$ over $\ka$ is a {\it $\ka$-unirational surface} if there exists a $\ka$-rational variety $Y$ and a dominant rational map $\varphi: Y \dashrightarrow X$.
\end{definition}

\begin{definition}
\label{minimality}
A {\it $G$-surface} is a pair $(X, G)$ where $X$ is a projective surface over $\ka$ and $G$ is a subgroup of $\Aut_{\ka}(X)$. A morphism of $G$-surfaces $f: X \rightarrow X'$ is called a {\it $G$-morphism $(X, G) \rightarrow (X', G)$} if for each $g \in G$ one has $fg = gf$.

A smooth $G$-surface $(X, G)$ is called {\it $G$-minimal} if any birational morphism of smooth $G$-surfaces $(X, G) \rightarrow (X',G)$ is an isomorphism.

Let $(X, G)$ be a smooth $G$-surface. A $G$-minimal surface $(Y, G)$ is called a {\it minimal model} of $(X, G)$ if there exists a birational $G$-morphism $X \rightarrow Y$.
\end{definition}

The classification of $G$-minimal rational surfaces is well-known due to V.\,Iskovskikh and Yu.\,Manin (see \cite{Isk79} and \cite{Man67}). We recall some important notions before surveying it.

\begin{definition}
\label{Cbundledef}
A smooth rational $G$-surface $(X, G)$ admits a {\it conic bundle} structure if there exists a $G$-morphism $\varphi: X \rightarrow B$ such that any scheme fibre is isomorphic to a reduced conic in~$\Pro^2_{\ka}$ and $B$ is a smooth curve. The curve $B$ is called the {\it base} of conic bundle.

\end{definition}

Let $\overline{\varphi}: \XX \rightarrow \overline{B}$ be a conic bundle. A general fibre of $\overline{\varphi}$ is isomorphic to $\Pro^1_{\kka}$. The fibration $\overline{\varphi}$ has a finite number of singular fibres which are degenerate conics. Any irreducible component of a singular fibre is a $(-1)$-curve. If $n$ is the number of singular fibres of $\overline{\varphi}$ then $K_X^2 + n = 8$.

\begin{definition}
\label{relmin}
Let $X$ be a $G$-surface that admits a conic bundle structure \mbox{$\varphi: X \rightarrow B$}. The conic bundle is called \textit{relatively $G$-minimal} over $B$ if for any decomposition of $\varphi$ into $G$-morphisms
$$
X \xrightarrow{\psi} X' \rightarrow B
$$
\noindent such that the morphism $\psi$ is birational the morphism $\psi$ is isomorphism.
\end{definition}

A conic bundle \mbox{$\varphi: X \rightarrow B$} is relatively $G$-minimal over $B$ if and only if $\operatorname{rk} \Pic(X)^G = 2$.

\begin{definition}
\label{DPdef}
A {\it del Pezzo surface} is a smooth projective surface $X$ such that the anticanonical divisor $-K_X$ is ample.
\end{definition}

\begin{theorem}[{\cite[Theorem 1]{Isk79}}]
\label{Minclass}
Let $X$ be a $G$-minimal rational $G$-surface. Then either $X$ admits a conic bundle structure over a rational curve with $\Pic(X)^{G} \cong \Z^2$, or $X$ is a del Pezzo surface with \mbox{$\Pic(X)^{G} \cong \Z$}.
\end{theorem}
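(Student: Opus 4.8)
This is \cite[Theorem 1]{Isk79}. The plan is to run the $G$-equivariant minimal model program on $X$ itself and to use that, being already $G$-minimal, $X$ must coincide with its output, a $G$-Mori fibre space; the two alternatives of the theorem then correspond to the base of this fibre space having dimension $0$ or $1$. First I would observe that $K_X$ is not nef: since $X$ is rational, $\kappa(\XX) = -\infty$, and by the Enriques--Kodaira classification a surface with pseudo-effective canonical class has non-negative Kodaira dimension, so $K_X$ is not even pseudo-effective. Hence there is an effective class $C$ on $X$ with $K_X \cdot C < 0$, and averaging over the group, the class $D = \sum_{g \in G} g_* C$ is a $G$-invariant effective class with $K_X \cdot D < 0$; in particular $\overline{\mathrm{NE}}(X)^G$ is not contained in $\{ D' : K_X \cdot D' \geqslant 0 \}$. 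By the $G$-equivariant cone and contraction theorems for surfaces --- the invariant Mori theory underlying \cite{Man67}, \cite{Isk79}, \cite{DI1} --- there is then a $K_X$-negative extremal ray $R$ of $\overline{\mathrm{NE}}(X)^G$, and its contraction $\operatorname{cont}_R \colon X \to Y$ is a $G$-morphism with $\operatorname{rk} \Pic(Y)^G = \operatorname{rk} \Pic(X)^G - 1$ that contracts exactly the curves with numerical class in $R$.

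Next I would run through the cases according to $\dim Y$. If $\dim Y = 2$, then $\operatorname{cont}_R$ is a birational $G$-morphism onto a smooth $G$-surface whose exceptional set is a $G$-orbit of pairwise disjoint $(-1)$-curves, which contradicts the $G$-minimality of $X$; so this case does not occur. If $\dim Y = 0$, then $\operatorname{rk} \Pic(X)^G = 1$ and $\operatorname{cont}_R$ is a contraction of $X$ to a point, so by the contraction theorem $-K_X$ is ample and $X$ is a del Pezzo surface with $\Pic(X)^G \cong \Z$. If $\dim Y = 1$, then $Y$ is a smooth curve, which is rational because it is dominated by the rational surface $\XX$ (Theorem~\ref{Luroth}); the general fibre of $\operatorname{cont}_R$ is a smooth conic and $\operatorname{rk} \Pic(X)^G = \operatorname{rk} \Pic(Y)^G + 1 = 2$, so $\Pic(X)^G \cong \Z^2$. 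It remains to see that every scheme fibre $F$ of $\operatorname{cont}_R$ is a reduced conic in $\Pro^2_{\ka}$: from $F^2 = 0$ and $-K_X \cdot F = 2$ the geometric components of $F$ are $(-1)$- and $(-2)$-curves with the coefficients of the $(-1)$-components summing to at most $2$, and if $F$ were neither a smooth conic nor a pair of transversal $(-1)$-curves, then its end components would sweep out a $G$-stable set of pairwise disjoint $(-1)$-curves, defined over $\ka$, which could be contracted, once more contradicting $G$-minimality. Hence $\operatorname{cont}_R$ is a conic bundle in the sense of Definition~\ref{Cbundledef}. In particular $\operatorname{rk} \Pic(X)^G \in \{ 1, 2 \}$ is automatic for $G$-minimal rational $X$, since the very first extremal contraction is already to a point or of fibre type.

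I expect the genuine obstacle to be the equivariant part of Mori theory in dimension two: establishing that a $K_X$-negative extremal ray of $\overline{\mathrm{NE}}(X)^G$ is contractible by a $G$-morphism, classifying this contraction into the three types above, and in particular showing that a $G$-extremal ray which is not of fibre type is spanned by the class of a $G$-orbit of pairwise disjoint $(-1)$-curves --- together with the borderline bookkeeping in the fibre-type case, excluded by $G$-minimality, that rules out $(-2)$-curves and longer chains in the fibres. All of this is classical and forms the technical core of \cite{Man67} and \cite{Isk79}; granting it, the dichotomy of the theorem follows as above.
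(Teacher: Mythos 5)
The paper does not prove this statement at all: it is quoted verbatim from \cite[Theorem 1]{Isk79}, so there is no internal argument to compare against. Your sketch is the standard equivariant-MMP proof of Iskovskikh's classification (a $K_X$-negative $G$-invariant extremal ray exists because $X$ is rational, its contraction is birational, to a point, or a fibration, and the birational case is excluded by $G$-minimality while the other two give the del Pezzo and conic bundle alternatives), and in outline it is correct and matches the classical route in \cite{Man67}, \cite{Isk79}, \cite{DI1}, modulo the equivariant cone/contraction theorems that you rightly flag as the technical core.

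One local clean-up: in the fibre-type case you do not need to invoke $G$-minimality again, and your fallback argument there is slightly off. Every curve contracted by $\operatorname{cont}_R$ lies in the $K_X$-negative ray $R$, so every geometric component $C$ of a fibre satisfies $K_X \cdot C < 0$; in particular $(-2)$-curves cannot occur among fibre components, contrary to what you allow. If a fibre $F$ is reducible, each component has negative self-intersection, hence is a $(-1)$-curve, and then $-K_X \cdot F = 2$ together with $F^2 = 0$ forces $F$ to be exactly two $(-1)$-curves meeting transversally. The auxiliary argument about contracting ``end components'' is therefore unnecessary, and as stated it is problematic anyway: in a two-component fibre the two end components intersect, so they do not form a set of pairwise disjoint $(-1)$-curves that one could contract. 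With that replaced by the $K$-negativity observation, the proof is the expected one.
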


\begin{theorem}[{cf. \cite[Theorem 4]{Isk79}, \cite[Theorem 5]{Isk79}}]
\label{MinCB}
Let $X$ admit a $G$-equivariant structure of a conic bundle. Then:

(i) If $K_X^2 = 3, 5, 6, 7$ or $X \cong \F_1$ then $X$ is not $G$-minimal.

(ii) If $K_X^2 = 8$ then $X$ is isomorphic to $\F_n$.

(iii) If $K_X^2 \ne 3, 5, 6, 7$, $X \ne \F_1$ and $X$ is relatively $G$-minimal then $X$ is $G$-minimal.

(iv) If $K_X^2 = 3, 5, 6$ and $X$ is relatively $G$-minimal, then $X$ is a del Pezzo surface.

(v) If $K_X^2 = 7$, then $X$ is not relatively $G$-minimal.

\end{theorem}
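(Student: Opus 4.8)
The plan is to base-change to $\kka$, work with $\XX$ and the conic bundle $\overline\varphi\colon\XX\to\overline B$, and exploit the induced actions of $G$ and of $\Gal(\kka/\ka)$ on $\Pic(\XX)$ and on the set of curves of $\XX$; let $\widetilde{G}$ denote the group these generate, so that $\Pic(X)^{G}=\Pic(\XX)^{\widetilde{G}}$. In each case we either exhibit a $\widetilde{G}$-stable set of pairwise disjoint $(-1)$-curves --- contracting it yields a non-trivial birational $G$-morphism of smooth $G$-surfaces over $\ka$, possibly even over $B$ --- or compute $\operatorname{rk}\Pic(X)^{G}$ and invoke the classification recalled above. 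The only geometric inputs are: $\overline\varphi$ has exactly $n=8-K_X^2$ singular fibres, each a transversal pair of $(-1)$-curves; these $2n$ curves are precisely the $(-1)$-curves of $\XX$ lying in fibres; and, every scheme fibre being a \emph{reduced} conic, no fibre has three or more components --- this last fact rigidifies the configuration of negative curves of $\XX$. We also use that $X$ fails to be $G$-minimal exactly when a non-trivial birational $G$-morphism of smooth $G$-surfaces starts at $X$; that by Theorem~\ref{Minclass} a $G$-minimal rational surface has $\operatorname{rk}\Pic(X)^{G}\le2$; and that $X$ is relatively $G$-minimal over $B$ precisely when $\operatorname{rk}\Pic(X)^{G}=2$, equivalently when there is no $\widetilde{G}$-stable set of pairwise disjoint fibre components. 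Finally, writing $f$ for the ($\widetilde{G}$-invariant) class of a fibre, we have $f^2=0$ and $-K_X\cdot f=2$, so $f$ and $K_X$ are linearly independent and already span a rank-$2$ sublattice of $\Pic(X)^{G}$.

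If $K_X^2=8$ then $n=0$, so $\overline\varphi$ is a $\Pro^1$-bundle, which forces $X\cong\F_n$; this is (ii). If $X\cong\F_1$, the negative section is the unique irreducible curve of negative self-intersection, hence $\widetilde{G}$-invariant and defined over $\ka$, and its contraction is a non-trivial birational $G$-morphism, so $\F_1$ is not $G$-minimal. Now let $K_X^2=7$, so $n=1$. Since $\XX$ has Picard rank $3$ it carries a distinguished $\widetilde{G}$-invariant curve: the unique $(-1)$-curve $s$ with $s\cdot f=1$, or, when no such curve exists, the unique irreducible curve of self-intersection $\le-2$ --- one of the two is always present. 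A short intersection-number check shows the two components of the unique singular fibre have distinct intersection numbers with this distinguished curve, so they cannot be interchanged by $\widetilde{G}$; hence one of them is $\widetilde{G}$-invariant, and its contraction over $B$ is a non-trivial birational $G$-morphism over $B$. This proves (v), and a fortiori $X$ is not $G$-minimal.

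Suppose now $K_X^2\in\{3,5,6\}$. If $X$ is relatively $G$-minimal, i.e.\ $\operatorname{rk}\Pic(X)^{G}=2$, I claim $\XX$ is a del Pezzo surface. If not, $\XX$ carries an irreducible curve $C$ with $-K_X\cdot C\le0$; the reduced-conic condition forces $C$ not to lie in a fibre, hence to be horizontal, and the $\widetilde{G}$-orbit sum $\Sigma$ of all such curves lies in $\Pic(X)^{G}$. Comparing the triple $(\Sigma^2,\Sigma\cdot f,\Sigma\cdot K_X)$ with the values attainable by $af+bK_X$ --- using integrality of the intersection form together with $K_X^2\in\{3,5,6\}$ --- one finds that $\Sigma$ cannot lie in $\langle f,K_X\rangle\otimes\Q$, whence $\operatorname{rk}\Pic(X)^{G}\ge3$, a contradiction. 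So $\XX$, and therefore $X$, is a del Pezzo surface of degree $K_X^2$, which is (iv). Such an $X$ is never $G$-minimal: for $K_X^2=3$ the $2$-section $m=-K_X-f$ is a single $\widetilde{G}$-invariant $(-1)$-curve whose contraction does the job; for $K_X^2\in\{5,6\}$ the $(-1)$-curves $C$ with $C\cdot f=1$ form a $\widetilde{G}$-stable set of pairwise disjoint sections (four of them when $K_X^2=5$, two when $K_X^2=6$), which we contract. And if $X$ is not relatively $G$-minimal then $\operatorname{rk}\Pic(X)^{G}\ge3$, so $X$ is not $G$-minimal by Theorem~\ref{Minclass}. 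In every case $X$ is not $G$-minimal, which is (i).

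It remains to prove (iii). Let $K_X^2\in\{1,2,4\}$, or $K_X^2\le0$, or $X=\F_n$ with $n\ne1$, and suppose $X$ is relatively $G$-minimal. A hypothetical non-trivial birational $G$-morphism out of $X$ contracts a $\widetilde{G}$-stable set $\{C_1,\dots,C_k\}$ of pairwise disjoint $(-1)$-curves. If all $C_i$ lie in fibres this contradicts relative $G$-minimality; if $X=\F_n$ with $n\ne1$ there are no $(-1)$-curves at all. Otherwise some $C_i$ is horizontal; since $f$ is $\widetilde{G}$-invariant, all $C_i$ in a single $\widetilde{G}$-orbit share one positive value of $C_i\cdot f$, and the orbit sum $\Sigma$ lies in $\Pic(X)^{G}$ with $\Sigma^2=-k$, $\Sigma\cdot K_X=-k$, and $\Sigma\cdot f$ an explicit positive multiple of $k$; comparing with $af+bK_X$ exactly as above yields $\Sigma\notin\langle f,K_X\rangle\otimes\Q$, hence $\operatorname{rk}\Pic(X)^{G}\ge3$, a contradiction. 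So $X$ is $G$-minimal, proving (iii). The real work in this proof is the orbit-sum bookkeeping behind (iv) and (iii): for each relevant value of $K_X^2$ one must exclude a $\widetilde{G}$-stable contractible set of $(-1)$-curves meeting the fibres --- and, in (iv), any curve $C$ with $-K_X\cdot C\le0$ --- by showing its orbit sum cannot lie in $\langle f,K_X\rangle\otimes\Q$. This is precisely the $\widetilde{G}$-equivariant form of the analysis in \cite[Theorems~4 and~5]{Isk79}, and the only genuine labour is a short case-by-case inspection of which minimal rational surface $\XX$ is a blow-up of and which of its $(-1)$-curves are horizontal for $\overline\varphi$; I expect this bookkeeping, and in particular the del Pezzo statement (iv), to be the main obstacle.
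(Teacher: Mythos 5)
The paper never proves Theorem~\ref{MinCB} --- it is imported from Iskovskikh (\cite{Isk79}, Theorems~4 and~5) --- so there is no internal argument to compare with; your sketch is essentially the $\widetilde G$-equivariant form of Iskovskikh's original lattice argument, and I believe it closes. The bookkeeping you defer is finite and does work out. For (iii): if $\Sigma$ is the orbit sum of $r$ pairwise disjoint horizontal $(-1)$-curves with $C_i\cdot f=m$, then $\operatorname{rk}\Pic(X)^G=2$ forces $\Sigma=\alpha f+\beta K_X$ with $\beta=-rm/2$ and $\alpha=r(2-md)/4$ (where $d=K_X^2$), so $\Sigma^2=r^2m-r^2m^2d/4$, and equating this with $\Sigma^2=-r$ gives $rm(md-4)=4$, which has no integer solutions for $d\le 4$, $d\ne 3$ --- exactly the range of (iii). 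For (iv): with $k=C\cdot f\ge 1$, $e=K_X\cdot C\ge 0$, the forced value is $\Sigma^2=-r^2(k^2d/4+ke)$; adjunction together with $\sum_{i<j}C_i\cdot C_j\ge 0$ eliminates $k\ge 2$, non-integrality of $r^2d/4$ eliminates odd $r$ when $k=1$ and $d\in\{3,5,6\}$, and for even $r$ the same inequality leaves only $d=3$, $e=0$, $r=2$, which is excluded because $2C_1\cdot C_2$ would be odd. So the ``main obstacle'' you anticipate is a short, checkable case analysis, and your contractions for (i) (the class $-K_X-f$ on the cubic, the two resp.\ four disjoint sections on the degree $6$ and $5$ del Pezzo surfaces) are correct.

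Two caveats worth recording. In (v), the dichotomy ``unique $(-1)$-section or unique irreducible curve of self-intersection $\le -2$'' is true but needs the one-line justification that $\XX$ is $\F_m$ blown up at one point, after which your intersection argument (a section meets exactly one component of the singular fibre) does distinguish the two components. And your proof of (ii) only yields $\XX\cong\F_n$: over $\ka$ a conic bundle with no degenerate fibres need not be a Hirzebruch surface (e.g.\ the product of a pointless conic with a curve), so the statement as written tacitly uses a $\ka$-point, supplied in the paper's applications by $\ka$-rationality; this is an imprecision inherited from the quoted statement rather than a defect of your argument.
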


The following lemma is implied by \cite[Theorem 1.6]{Isk67}. For convenience of reader we give a sketch of proof.

\begin{lemma}
\label{rigidbundle}
Let $\varphi_1: X_1 \rightarrow B_1$ and $\varphi_2: X_2 \rightarrow B_2$ be two relatively minimal conic bundles such that $\varphi_1$ has $m$ singular fibres over points $p_1$, \ldots, $p_m$ in $\overline{B}_1$ and $\varphi_2$ has $l$ singular fibres over points $q_1$, \ldots, $q_l$ in~$\overline{B}_2$. If $X_1$ and $X_2$ are birationally equivalent and $K_{X_1}^2 \leqslant 0$ then $m = l$ and there exists an isomorphism $B_1 \rightarrow B_2$ such that the points $p_1$, \ldots, $p_m$ map to the points $q_1$, \ldots, $q_l$ up to permutation.
\end{lemma}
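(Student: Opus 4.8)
The plan is to reduce the statement to a standard fact about relatively minimal conic bundles over a fixed base, namely that when $K_X^2 \leqslant 0$ the conic bundle structure is essentially unique, and then track the discriminant locus through a birational map. First I would recall that since $\varphi_1$ and $\varphi_2$ are relatively minimal conic bundles with $K_{X_i}^2 \leqslant 0$, each $X_i$ carries a unique conic bundle structure up to isomorphism: any two morphisms to curves with conic fibres differ by an automorphism of the base. This follows from \cite[Theorem 1.6]{Isk67}; geometrically, the fibre class is the unique (up to positive multiple) class $f$ in $\Pic(\XX_i)$ with $f^2 = 0$, $f \cdot K_{X_i} = -2$, and such that $f$ is nef and not a multiple, once $K_{X_i}^2 \leqslant 0$ forces enough singular fibres (at least $8$) to pin down the geometry. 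Hence a birational map $\chi : X_1 \dashrightarrow X_2$, after resolving, induces a birational map of the bases, which for curves is a genuine isomorphism $\beta : B_1 \to B_2$ (both being rational curves, in fact isomorphic to $B_1$), compatible with $\varphi_1$, $\varphi_2$.

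Next I would analyse $\chi$ via a minimal resolution $W \to X_1$, $W \to X_2$ as a composition of elementary transformations (blow-ups of points on fibres followed by blow-downs of the strict transforms of those fibres). The key point is that an elementary transformation centred at a point $p \in X_1$ lying on a \emph{smooth} fibre $F$ produces again a smooth fibre over $\varphi_1(p)$, while an elementary transformation at a singular point of a singular fibre, or at a smooth point of a singular fibre, either preserves or does not change the number of singular fibres over that point in the relevant way; in all cases the multiset of base points carrying singular fibres is unchanged as long as we stay within relatively minimal models. Since $X_1$ and $X_2$ are both relatively minimal, the chain of elementary transformations joining them can be taken so that each intermediate surface is again a relatively minimal conic bundle over the same base (this is the content of the "links of type II" description in the Sarkisov-style decomposition, already available from \cite{Isk67}), and each such link preserves both the number of singular fibres and, under the induced identification of bases, their locations. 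Therefore $m = l$ and $\beta$ carries $\{p_1,\dots,p_m\}$ to $\{q_1,\dots,q_l\}$.

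Alternatively, and perhaps more cleanly, I would argue directly: the discriminant divisor of a conic bundle $\varphi : X \to B$ with $K_X^2 \leqslant 0$ is a birational invariant of $X$ together with its unique ruling, because the singular fibres are precisely the fibres of $\varphi$ that are reducible, and reducibility of a fibre is detected by the intersection theory on $\XX$ (a fibre is singular iff it contains a $(-1)$-curve), which is preserved under the isomorphism of Picard lattices induced by a birational map between relatively minimal surfaces with $K^2 \leqslant 0$ — here one uses that for $K_X^2 \leqslant 0$ the surface is not rationally connected-over-$\overline{B}$ in a way that would allow extra rulings, so $\operatorname{Pic}(\XX_1) \cong \operatorname{Pic}(\XX_2)$ as lattices respecting the canonical class and the fibre class. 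Then the number of $(-1)$-curves meeting $-K$ trivially and contained in fibres is $2m = 2l$, giving $m = l$, and the induced base isomorphism matches the discriminant loci.

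The main obstacle I anticipate is making precise the claim that the conic bundle structure is unique when $K_X^2 \leqslant 0$ — in particular ruling out that $X$ could be, say, a del Pezzo surface in disguise (excluded since $K_X^2 \leqslant 0$) or could admit a second ruling with a different discriminant. This is exactly where the hypothesis $K_X^2 \leqslant 0$ is essential (for $K_X^2 = 8$, i.e. Hirzebruch surfaces, the ruling can change and the statement fails), and where I would lean on \cite[Theorem 1.6]{Isk67}. Everything else — the passage from a birational map of surfaces to an isomorphism of the rational base curves, and the bookkeeping of singular fibres through elementary transformations — is routine once that uniqueness is in hand, so the write-up should isolate the uniqueness of the ruling as the single nontrivial input and cite it, then assemble the conclusion in a few lines.
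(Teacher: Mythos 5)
Your main argument is essentially the paper's own proof: invoke \cite[Theorem 1.6]{Isk67} to get an isomorphism of bases compatible with the fibrations, decompose the birational map into Sarkisov links of type II (elementary transformations) between relatively minimal conic bundles, and observe that each such link preserves singular fibres and their positions, so $m=l$ and the discriminant points correspond under the base isomorphism. The alternative Picard-lattice sketch in your third paragraph is unnecessary (and less rigorous), but the primary route is correct and matches the paper.
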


\begin{proof}

Let $f: X_1 \dashrightarrow X_2$ be a birational map. Then by \cite[Theorem 1.6]{Isk67} there exists an isomorphism $u: B_1 \rightarrow B_2$ such that $u\varphi_1 = \varphi_2f$. Therefore the map $f$ decomposes into Sarkisov links of type $II$. The image of a singular fibre under such link is a singular fibre. Thus $m = l$ and for any $p_i$ one has $u(p_i) = q_j$.

\end{proof}

The following theorem is an important criterion of $\ka$-rationality over an arbitrary perfect field $\ka$.

\begin{theorem}[{\cite[Chapter 4]{Isk96}}]
\label{ratcrit}
A minimal rational surface $X$ over a perfect field $\ka$ is $\ka$-rational if and only if the following two conditions are satisfied:

(i) $X(\ka) \ne \varnothing$;

(ii) $K_X^2 \geqslant 5$.
\end{theorem}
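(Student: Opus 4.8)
The plan is to combine the classification of minimal rational surfaces (Theorem~\ref{Minclass}) with a case analysis, treating del Pezzo surfaces and conic bundles separately. By Theorem~\ref{Minclass} a minimal rational surface $X$ is either a del Pezzo surface with $\Pic(X) \cong \Z$ or a conic bundle over a rational curve with $\Pic(X) \cong \Z^2$; moreover a del Pezzo surface of degree $7$ is never minimal, since $\XX$ always carries a $\Gal(\kka/\ka)$-invariant $(-1)$-curve, and by Theorem~\ref{MinCB}(v) a relatively minimal conic bundle never has $K_X^2 = 7$.

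Necessity of (i) is immediate: a $\ka$-rational surface admits a dominant rational map from $\Pro^2_{\ka}$, which has $\ka$-points, so by the Lang--Nishimura lemma (applicable since $X$ is proper and smooth) one gets $X(\ka) \ne \varnothing$. For necessity of (ii) I would use the fact, due to Manin \cite{Man67}, that $H^1\bigl(\Gal(\kka/\ka), \Pic \XX\bigr)$ is a birational invariant among smooth projective $\ka$-rational surfaces: under an equivariant blow-up the lattice $\Pic \XX$ acquires a permutation summand, which carries no $H^1$, and any birational map of such surfaces factors through equivariant blow-ups and blow-downs. This group is trivial for $\Pro^2_{\ka}$, hence for every $\ka$-rational surface; on the other hand, a direct computation with the $\Gal(\kka/\ka)$-action on $\Pic \XX$ shows that it is nontrivial for a minimal del Pezzo surface of degree $\le 4$ and for a minimal conic bundle with at least four singular fibres. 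Such surfaces are therefore not $\ka$-rational, which forces $K_X^2 \ge 5$. The delicate point is that for del Pezzo surfaces of degree $1$ and $2$ (and for some cubic surfaces) the group $H^1$ may vanish although the surface is still not $\ka$-rational; there one appeals instead to the Iskovskikh--Manin theory of birational rigidity, or analyses the elementary links issuing from $X$ and checks that none of them can reach $\Pro^2_{\ka}$. This is the main obstacle of the whole argument.

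For sufficiency, assume $X(\ka) \ne \varnothing$ and $K_X^2 \ge 5$, and fix $p \in X(\ka)$. By the classification, together with Theorem~\ref{MinCB}(iv) (a relatively minimal conic bundle with $K_X^2 \in \{5, 6\}$ is a del Pezzo surface) and Theorem~\ref{MinCB}(v), the only possibilities are a del Pezzo surface of degree $9$, $8$, $6$ or $5$, or a conic bundle with $K_X^2 = 8$, that is, a $\ka$-form of $\F_n$. In each case one uses $p$ to run a short $\Gal(\kka/\ka)$-equivariant birational program ending at $\Pro^2_{\ka}$. If $\deg X = 9$ then $X$ is a Brauer--Severi surface and the $\ka$-point forces $X \cong \Pro^2_{\ka}$. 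If $\deg X = 8$ then $X$ is a smooth quadric surface in $\Pro^3_{\ka}$ (with the two rulings possibly interchanged by Galois), and projection from $p$ gives a birational map $X \dashrightarrow \Pro^2_{\ka}$. If $\XX \cong \F_n$ then, $X$ having a $\ka$-point, its base is $\Pro^1_{\ka}$ and $X$ is a Hirzebruch surface over $\ka$, hence $\ka$-rational. It remains to treat del Pezzo surfaces of degrees $6$ and $5$: here one uses $p$ to perform a short sequence of blow-ups of $\ka$-points followed by contractions of $\Gal(\kka/\ka)$-invariant disjoint unions of $(-1)$-curves --- equivalently, a sequence of Sarkisov links --- each raising $K_X^2$ and preserving the existence of a $\ka$-point, until one reaches a surface with $K_X^2 = 9$ carrying a $\ka$-point, namely $\Pro^2_{\ka}$; one uses in addition that a del Pezzo surface of degree $5$ always has a $\ka$-point, so condition (i) is automatic for it. The bookkeeping of which equivariant links are available and the verification that $K_X^2 \ge 5$ holds until the last step is routine, but it is where all the remaining case analysis lives.
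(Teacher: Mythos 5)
First, note that the paper does not prove this statement at all: it is quoted from Iskovskikh's survey \cite{Isk96} as a known criterion, so there is no internal proof to compare yours with; your sketch has to stand on its own as a reconstruction of Iskovskikh's argument. The easy parts of your outline are fine: necessity of (i) via Lang--Nishimura, and sufficiency ($K_X^2 \geqslant 5$ plus a $\ka$-point implies $\ka$-rationality) by the classical case analysis of Severi--Brauer surfaces, quadrics, forms of $\F_n$, and del Pezzo surfaces of degrees $5$ and $6$ treated by links through a rational point.

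The genuine gap is in your argument for necessity of (ii), which is the actual content of the theorem. You claim that a direct computation shows $H^1\bigl(\Gal(\kka/\ka), \Pic \XX\bigr) \ne 0$ for every minimal del Pezzo surface of degree $\leqslant 4$ and every minimal conic bundle with at least four singular fibres, reserving the rigidity argument only for degrees $1$, $2$ and ``some cubic surfaces.'' That claim is false. A cubic or quartic del Pezzo surface whose Galois image in the Weyl group is the full Weyl group is minimal (no invariant classes besides $K_X$) and has trivial $H^1\bigl(\Gal(\kka/\ka), \Pic \XX\bigr)$; likewise a Ch\^atelet surface $y^2 - a z^2 = f(x)$ with $f$ irreducible of degree $4$ and $a$ a non-square in the splitting data is a minimal conic bundle with exactly four singular fibres, $K_X^2 = 4$, and trivial $H^1$, yet it is not $\ka$-rational (it is even $\ka$-unirational when it has a point). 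So the birational invariant you propose cannot rule out $\ka$-rationality in precisely the cases $K_X^2 = 3, 4$ and for conic bundles with few orbits of degenerate fibres, and your fallback --- ``appeal to Iskovskikh--Manin birational rigidity or analyse the elementary links'' --- is exactly the hard theorem of \cite[Chapter 4]{Isk96} (Noether--Fano-type estimates and the classification of elementary links for all minimal surfaces with $K_X^2 \leqslant 4$), not a step you have supplied. As written, the central implication ``$K_X^2 \leqslant 4$ minimal $\Rightarrow$ not $\ka$-rational'' is either unproved or circular, so the sketch does not constitute a proof of the criterion.
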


\begin{corollary}
\label{Cbundleratcrit}
Let $X$ admit a relatively minimal conic bundle structure over a perfect field $\ka$. The surface $X$ is $\ka$-rational if and only if the following two condition are satisfied:

(i) $X(\ka) \ne \varnothing$;

(ii) $K_X^2 \geqslant 5$.
\end{corollary}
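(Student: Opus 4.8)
The plan is to reduce everything to Iskovskikh's criterion (Theorem~\ref{ratcrit}) by running the (non-equivariant) minimal model program over $\ka$. So let $\pi\colon X\to X'$ be a birational $\ka$-morphism onto a minimal rational surface $X'$. Then $X$ is $\ka$-rational if and only if $X'$ is, and $X(\ka)\ne\varnothing$ if and only if $X'(\ka)\ne\varnothing$: the implication $X(\ka)\ne\varnothing\Rightarrow X'(\ka)\ne\varnothing$ is clear since $\pi$ is a morphism, and the converse is the Lang--Nishimura lemma applied to the rational map $\pi^{-1}\colon X'\dashrightarrow X$ (with $X'$ smooth and $X$ proper). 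Moreover, since $\pi$ is a composition of contractions of Galois orbits of disjoint $(-1)$-curves, one has $K_{X'}^2\geqslant K_X^2$. Combining these with Theorem~\ref{ratcrit} applied to $X'$, the statement will follow once I prove: (a) if $K_X^2\geqslant 5$ the criterion already gives what we want, and (b) if $K_X^2\leqslant 4$ then $X$ is not $\ka$-rational, so that, together with the failure of~(ii), both sides of the claimed equivalence are false.

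Part~(a) is immediate: if $K_X^2\geqslant 5$ then $K_{X'}^2\geqslant K_X^2\geqslant 5$, so Theorem~\ref{ratcrit} says that $X'$, and hence $X$, is $\ka$-rational if and only if $X'(\ka)\ne\varnothing$, i.e.\ if and only if $X(\ka)\ne\varnothing$; and condition~(ii) holds automatically. For part~(b), assume $K_X^2\leqslant 4$. By Theorem~\ref{MinCB}(v) we have $K_X^2\ne 7$, and $X\not\cong\F_1$ since $K_{\F_1}^2=8$; hence, provided $K_X^2\ne 3$, Theorem~\ref{MinCB}(iii) tells us that $X$ is already minimal, and then Theorem~\ref{ratcrit} gives that $X$ is not $\ka$-rational because $K_X^2\leqslant 4<5$. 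The only remaining possibility is $K_X^2=3$, which I regard as the heart of the matter.

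So suppose $K_X^2=3$. By Theorem~\ref{MinCB}(iv) the surface $X$ is a del Pezzo surface of degree $3$. Let $F$ be the class of a fibre of the conic bundle; it is $\Gal(\kka/\ka)$-invariant and satisfies $-K_X\cdot F=2$, so the invariant class $L=-K_X-F$ has $L^2=L\cdot K_X=-1$ and is therefore represented by a unique line $\ell$ on $\overline{X}$; this line is defined over $\ka$ and, being a $(-1)$-curve, is isomorphic to $\Pro^1_{\ka}$. Contracting $\ell$ produces a smooth surface $Z$ which is a del Pezzo surface of degree $4$ with $\operatorname{rk}\Pic(Z)=\operatorname{rk}\Pic(X)-1=1$, hence a minimal rational surface; by Theorem~\ref{ratcrit}, $Z$ is not $\ka$-rational since $K_Z^2=4<5$, and therefore neither is $X$. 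This completes part~(b) and the proof. The main obstacle is precisely this degree~$3$ case: one cannot simply argue that ``the minimal model of $X$ has $K^2\leqslant 4$'', because minimal models are not unique and nothing a priori rules out $X$ being birational also to a minimal del Pezzo surface of degree $5$, which would be $\ka$-rational; producing the explicit non-$\ka$-rational model $Z$ through the distinguished line $\ell$ is what makes the argument go through.
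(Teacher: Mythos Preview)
Your proof is correct and follows essentially the same route as the paper's: reduce to a minimal model via Theorem~\ref{ratcrit}, observe that for $K_X^2\leqslant 4$ with $K_X^2\ne 3$ the surface is already minimal by Theorem~\ref{MinCB}(iii), and in the degree~$3$ case contract to a minimal del~Pezzo surface of degree~$4$. The only difference is that you exhibit the Galois-invariant $(-1)$-curve $L=-K_X-F$ explicitly, whereas the paper simply cites the proof of \cite[Theorem~4]{Isk79} for the existence of a minimal model with $K^2=4$.
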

\begin{proof}
If $K_X^2 \geqslant 5$ and $X(\ka) \ne \varnothing$ then any minimal model $Y$ of $X$ is $\ka$-rational by Theorem \ref{ratcrit}. Thus $X$ is $\ka$-rational.

If $K_X^2 < 5$ and $X$ is relatively minimal then $X$ is minimal if $K_X^2 \ne 3$ by Theorem \ref{MinCB} (iii). Therefore if $K_X^2 \ne 3$ then $X$ is not $\ka$-rational by Theorem \ref{ratcrit}. If $K_X^2 = 3$ then there exists a minimal model $Y$ of $X$ such that $K_Y^2 = 4$ (see the proof of \cite[Theorem 4]{Isk79}). Thus $Y$ and $X \approx Y$ are not $\ka$-rational by Theorem \ref{ratcrit}.
\end{proof}

For convenience of the reader we use the following definitions.

\begin{definition}
\label{MMPred}
Let $X$ be a $G$-surface, $\widetilde{X} \rightarrow X$ be the minimal resolution of singularities, and $Y$ be a $G$-equivariant minimal model of~$\widetilde{X}$. We call the surface $Y$ a \textit{$G$-MMP-reduction} of $X$.
\end{definition}

\begin{definition}
\label{MMPrelred}
Let $X$ be a $G$-surface and let $\varphi: X \rightarrow B \cong \Pro^1_{\ka}$ be a $G$-morphism such that its general fibre is a smooth rational curve. Let $\widetilde{X} \rightarrow X$ be its ($G$-equivariant) minimal resolution of singularities, and $Y$ be a $G$-equivariant relatively minimal model of $\widetilde{X}$. We call the surface $Y$ a \textit{relative $G$-MMP-reduction} of $X$ over $B$.
\end{definition}

\section{Automorphisms of $\Pro^1_{\ka}$}

In this section we review some well-known facts about finite subgroups of $\mathrm{PGL}_2\left( \ka \right)$, i.e. group of automorphisms of $\Pro^1_{\ka}$.

\begin{theorem}[{see \cite[Chapter III]{Bl17}}]
\label{PGL2class}
Let $G$ be a finite subgroup of $\mathrm{PGL}_2 \left( \kka \right)$. Then $G$ is one of the following group: $\CG_k$, $\DG_{2k}$, $\AG_4$, $\SG_4$, $\AG_5$.
\end{theorem}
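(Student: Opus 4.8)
The statement to prove is the classification of finite subgroups of $\mathrm{PGL}_2(\kka)$, namely Theorem~\ref{PGL2class}. Since $\Char \ka = 0$ we may work over $\CC$ (or any algebraically closed field of characteristic zero), so $\mathrm{PGL}_2(\kka) \cong \mathrm{PGL}_2(\CC)$ acts on $\Pro^1_{\kka}$ by Möbius transformations. The plan is to reduce to the classical theorem on finite subgroups of $\mathrm{PGL}_2(\CC)$ — which over $\CC$ identifies finite subgroups (up to conjugacy) with the finite subgroups of $\mathrm{SO}_3(\R)$ acting on the Riemann sphere — and then invoke the enumeration of the latter. Concretely, I would first establish that every finite subgroup $G \subset \mathrm{PGL}_2(\kka)$ is conjugate to a subgroup preserving a round metric on $\Pro^1 = S^2$: pick any metric and average it over the finite group $G$ to obtain a $G$-invariant metric of constant curvature, so that $G$ embeds into $\mathrm{PGL}_2(\CC) \cap \Aut(S^2, g_{\mathrm{round}}) = \mathrm{PSU}_2(\CC) \cong \mathrm{SO}_3(\R)$.

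The second step is the enumeration of finite subgroups of $\mathrm{SO}_3(\R)$. Here I would use the standard counting argument: for a finite group $G$ of order $N > 1$ acting on $S^2$, count the orbit–stabilizer data of the set of poles (fixed points of nontrivial elements). Writing the class equation $2 - 2/N = \sum_{\text{orbits of poles}} (1 - 1/e_i)$, where $e_i$ is the stabilizer order on the $i$-th orbit, a short case analysis on the number of orbits (which must be $2$ or $3$) yields exactly the possibilities $(e_1, e_2) = (N, N)$ giving $\CG_N$; $(2, 2, N/2)$ giving $\DG_N$; $(2, 3, 3)$ giving $\AG_4$ with $N = 12$; $(2, 3, 4)$ giving $\SG_4$ with $N = 24$; and $(2, 3, 5)$ giving $\AG_5$ with $N = 60$. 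One then checks that each abstract possibility is realized by a genuine rotation group (cyclic, dihedral, tetrahedral, octahedral, icosahedral) and that the action determines the subgroup up to conjugacy. All of this is exactly the content of \cite{Bl17}, Chapter III, which is the cited reference, so in the write-up I would simply invoke it rather than reproduce the argument.

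The only genuine subtlety — and the step I would treat most carefully — is the passage from an arbitrary algebraically closed field $\kka$ of characteristic zero back to $\CC$. The averaging argument above is analytic and uses $\CC$; to handle a general $\kka$ one instead argues that $G \subset \mathrm{PGL}_2(\kka)$ is defined over a finitely generated, hence embeddable-in-$\CC$, subfield $k_0 \subset \kka$: the finitely many matrix entries of a set of generators of $G$ generate such a $k_0$, one embeds $k_0 \hookrightarrow \CC$, and the classification over $\CC$ applies to the image; since being isomorphic to one of the listed abstract groups is preserved under this embedding, $G$ is isomorphic to one of $\CG_k$, $\DG_{2k}$, $\AG_4$, $\SG_4$, $\AG_5$. (Note the statement only asserts the isomorphism type of $G$, not conjugacy of the embedding, so no descent of the conjugating element is needed, which keeps this last step clean.)
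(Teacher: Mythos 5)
Your proposal is essentially correct, but note that the paper offers no proof at all for this statement: it is a classical theorem quoted directly from Blichfeldt \cite[Chapter III]{Bl17}, so there is no argument in the text to compare against, and your plan of ultimately ``simply invoking'' the reference matches what the paper does. The sketch you supply is the standard one and is sound in outline: the reduction from an arbitrary algebraically closed $\kka$ of characteristic zero to $\CC$ via a finitely generated subfield $k_0$ containing the matrix entries of generators is fine (and you are right that only the isomorphism type is claimed, so no descent of a conjugating element is needed), and the orbit--pole counting $2-2/N=\sum(1-1/e_i)$ with the case analysis $(N,N)$, $(2,2,N/2)$, $(2,3,3)$, $(2,3,4)$, $(2,3,5)$ is the classical enumeration. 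Two remarks. First, the one step you phrase incorrectly is the conjugation into the compact form: averaging a Riemannian metric on $S^2$ over $G$ gives a $G$-invariant metric, but not one of constant curvature, so as stated that sentence does not yield $G\subset\mathrm{PSU}_2$. The standard correct version is to pass to the finite preimage $\widetilde{G}\subset\mathrm{SL}_2(\CC)$ of $G$ and average a Hermitian inner product on $\CC^2$ over $\widetilde{G}$, conjugating $\widetilde{G}$ into $\mathrm{SU}_2$ and hence $G$ into $\mathrm{PSU}_2\cong\mathrm{SO}_3(\R)$; this is an easy repair and does not affect the rest. Second, the detour through $\CC$ is avoidable: the same ramification count is exactly the Riemann--Hurwitz formula for the quotient map $\Pro^1_{\kka}\rightarrow\Pro^1_{\kka}/G\cong\Pro^1_{\kka}$, which is valid over any algebraically closed field of characteristic zero (the stabilizer of a point acts faithfully on the tangent line, hence is cyclic, replacing the ``poles'' of a rotation), so the argument can be run purely algebraically over $\kka$ if one prefers. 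Either route proves the theorem; the paper itself simply cites it.
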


One has $\mathrm{PGL}_2 \left( \ka \right) \subset \mathrm{PGL}_2 \left( \kka \right)$ therefore any finite subgroup of $\mathrm{PGL}_2 \left( \ka \right)$ is one of the groups listed in Theorem \ref{PGL2class}.

We need some facts about fixed points of elements of finite order.

\begin{lemma}
\label{fixedpoints}
Elements $g_1, g_2 \in \mathrm{PGL}_2 \left( \kka \right)$ such that the group $H = \langle g_1, g_2\rangle$ is finite have the same pair of fixed points on $\Pro^1_{\kka}$ if and only if the group $H$ is cyclic. If two elements $g_1$, $g_2$ of finite subgroup of $\mathrm{PGL}_2 \left( \kka \right)$ have a common fixed point on $\Pro^1_{\kka}$ then they have the same pair of fixed points.
\end{lemma}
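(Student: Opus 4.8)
The statement splits naturally into the two claims, and the second one is really the workhorse, so I would prove it first. Suppose $g_1$ and $g_2$ lie in a finite subgroup $H \subset \mathrm{PGL}_2(\kka)$ and share a common fixed point $p$ on $\Pro^1_{\kka}$. Each nontrivial element of $\mathrm{PGL}_2(\kka)$ of finite order is conjugate to a diagonal matrix $\operatorname{diag}(\xi, 1)$ (since $\Char \ka = 0$, it is semisimple), hence has exactly two fixed points, and its action near either fixed point is a nontrivial scaling by a root of unity. Conjugating so that $p = 0$ in a suitable affine chart, both $g_1$ and $g_2$ lie in the stabilizer of $0$, which is the Borel subgroup of upper-triangular matrices modulo scalars, i.e. the group of affine transformations $z \mapsto az + b$. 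The subgroup of this Borel fixing $0$ is $\{z \mapsto az\} \cong \kka^*$. The key point is that a finite subgroup of the affine group $\{z\mapsto az+b : a\in\kka^*, b\in\kka\}$ must actually be \emph{linearizable}, hence conjugate into $\{z\mapsto az\}$, and in particular its translation part is trivial: if $z\mapsto z+b$ with $b\neq 0$ belonged to a finite group it would have infinite order in characteristic zero, and more generally the commutator subgroup of a finite subgroup of the affine group lies in the translations and is therefore trivial, so a finite subgroup of the affine group is abelian and every element fixes the same second point. Concretely, averaging: the unique fixed point of $g_1$ other than $0$ is some point $q$, and since $H$ is finite and $g_1, g_2$ commute modulo this analysis, $g_2$ must preserve the fixed-point set $\{0,q\}$ of $g_1$; as $g_2$ also fixes $0$ and is not the identity, it cannot swap $0$ and $q$, so it fixes $q$ as well. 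This gives the second sentence.

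For the first sentence, the ``if'' direction is immediate: if $H = \langle g_1, g_2\rangle$ is cyclic, say generated by $h$, then $g_1$ and $g_2$ are both powers of $h$, hence have the same fixed-point pair as $h$ (a power of $\operatorname{diag}(\xi,1)$ is again $\operatorname{diag}(\xi^m,1)$, with the same two fixed points $0$ and $\infty$). For the ``only if'' direction, suppose $g_1$ and $g_2$ have the same pair of fixed points $\{0,\infty\}$. Then both are diagonal: $g_1 = \operatorname{diag}(\alpha,1)$, $g_2 = \operatorname{diag}(\beta,1)$ in $\mathrm{PGL}_2$, with $\alpha,\beta$ roots of unity (since $H$ is finite). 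The group they generate sits inside the diagonal torus, so it is a finite subgroup of $\kka^*$, hence cyclic. Therefore $H = \langle g_1, g_2\rangle$ is cyclic.

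The main obstacle, such as it is, is the argument that a finite subgroup of the Borel (affine) group cannot contain a nontrivial element fixing $0$ but not $\infty$ in a way that destroys the common-fixed-point conclusion — i.e. ruling out that $g_2$ swaps the two fixed points of $g_1$. This is handled by the order count: an element of $\mathrm{PGL}_2(\kka)$ that fixes one point and interchanges another pair would have to be an involution exchanging $0\leftrightarrow\infty$, which does \emph{not} fix $0$, contradiction; so once $g_2$ fixes $0$ and normalizes $\{0,q\}$ it fixes $q$ outright. One should also take a moment to note that the hypothesis ``$H$ finite'' is genuinely used (otherwise $z\mapsto z+1$ and $z\mapsto 2z$ share the fixed point $\infty$ but not $0$, and $\langle z\mapsto z, z\mapsto 2z\rangle$ is infinite — indeed the point is exactly that finiteness forces semisimplicity, which is what makes ``fixes one point'' upgrade to ``fixes the canonical partner point''). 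Everything else is the standard normal-form bookkeeping for $\mathrm{PGL}_2$ in characteristic zero and I would not belabor it.
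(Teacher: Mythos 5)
Your proof is correct, but it takes a somewhat different route from the paper's. The paper's argument is a two-line appeal to the faithfulness of the natural representation $H \rightarrow \mathrm{GL}\left(T_p \Pro^1_{\kka}\right) \cong \kka^*$ at the common fixed point $p$ (citing a lemma of Popov): faithfulness immediately gives that $H$ is cyclic, and then every element of a cyclic subgroup shares the fixed pair of a generator. You instead work inside the Borel subgroup stabilizing $p$: the commutator subgroup of a finite subgroup of the affine group lies in the translation subgroup, which is torsion-free in characteristic zero, so $H$ is abelian; commutativity forces $g_2$ to preserve the two-point fixed locus of $g_1$, and since $g_2$ fixes $p$ it must fix the second point as well. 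You then handle the cyclicity equivalence separately, via the diagonal torus and the fact that a finite subgroup of $\kka^*$ is cyclic. The underlying fact is the same in both proofs --- in characteristic zero the stabilizer of a point contains no nontrivial finite-order unipotents --- but your version is self-contained and avoids the cited lemma, at the cost of length, while the paper's is shorter by outsourcing exactly that step. Two cosmetic remarks: the affine maps $z \mapsto az+b$ stabilize $\infty$ rather than $0$, so you should either send $p$ to $\infty$ or use the opposite Borel (this does not affect the substance); and your closing sentence about an ``involution exchanging $0 \leftrightarrow \infty$'' is redundant, since a permutation of the two-element set $\{p,q\}$ that fixes $p$ automatically fixes $q$, which you had already observed.
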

\begin{proof}
Let two elements $g_1, g_2 \in \mathrm{PGL}_2 \left( \kka \right)$ have a common fixed point \mbox{$p \in \Pro^1_{\kka}$} and the group $H = \langle g_1, g_2\rangle$ is finite. The natural representation
$$
H \rightarrow \mathrm{GL} \left(T_p \Pro^1_{\kka} \right) \cong \kka^*
$$
\noindent is faithfull (see e.g. \cite[Lemma 4]{Pop14}). Thus $H$ is cyclic and all elements of $H$ have common pair of fixed points.
\end{proof}

\begin{lemma}[{cf. \cite[4.4.6]{Spr77}}]
\label{orbits}
Let a finite group $G$ faithfully act on~$\Pro^1_{\kka}$. Then orbits of points with non-trivial stabilizer have the following lengths:
\begin{itemize}
\item If $G \cong \CG_k$, $k \geqslant 2$, then there are two orbits of length $1$.
\item If $G \cong \DG_{2k}$, $k \geqslant 2$, then there are an orbit of length $2$ and two orbits of length $k$.
\item If $G \cong \AG_4$, then there are two orbits of length $4$ and an orbit of length $6$.
\item If $G \cong \SG_4$, then there are an orbit of length $6$, an orbit of length $8$ and an orbit of length $12$.
\item If $G \cong \AG_5$, then there are an orbit of length $12$, an orbit of length $20$ and an orbit of length $30$.
\end{itemize}
\end{lemma}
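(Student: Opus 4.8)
The plan is to apply the Riemann--Hurwitz formula to the quotient morphism $\pi \colon \Pro^1_{\kka} \to \Pro^1_{\kka}/G$. First I would note that the quotient curve $\Pro^1_{\kka}/G$ is smooth and projective, and being dominated by $\Pro^1_{\kka}$ it is rational by Theorem~\ref{Luroth} (alternatively, Riemann--Hurwitz alone forces its genus to be $0$); hence it is $\Pro^1_{\kka}$ and $\pi$ has degree $|G|$. Since $\Char \kka = 0$ the ramification is tame, and a point $p \in \Pro^1_{\kka}$ is a ramification point of $\pi$ if and only if its stabilizer $G_p$ is nontrivial, with ramification index $e_p = |G_p|$. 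Points over a fixed branch point form a single $G$-orbit whose members have conjugate stabilizers of a common order; so if $b_1, \dots, b_r$ are the branch points with stabilizer orders $e_1, \dots, e_r$, the corresponding orbits have lengths $\ell_i = |G|/e_i$, and these are precisely the orbits of points with nontrivial stabilizer.

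Riemann--Hurwitz then reads
$$-2 \;=\; |G|\cdot(-2) \;+\; \sum_{i=1}^{r}\ell_i(e_i-1) \;=\; -2|G| \;+\; \sum_{i=1}^{r}\bigl(|G|-\ell_i\bigr),$$
which after division by $|G|$ becomes
$$2 - \frac{2}{|G|} \;=\; \sum_{i=1}^{r}\left(1 - \frac{1}{e_i}\right), \qquad e_i \geqslant 2.$$
Each summand lies in $[\tfrac12, 1)$ and the left-hand side lies in $[1,2)$ for $|G| \geqslant 2$, so necessarily $r \in \{2,3\}$.

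It remains to solve this equation in each of the five cases given by Theorem~\ref{PGL2class}. If $r = 2$ then $\tfrac1{e_1} + \tfrac1{e_2} = \tfrac{2}{|G|}$, and since $e_i \leqslant |G|$ this forces $e_1 = e_2 = |G|$, i.e.\ two orbits of length $1$; this is exactly the cyclic case $G \cong \CG_k$, where by Lemma~\ref{fixedpoints} all nontrivial elements of $G$ share the same pair of fixed points. If $r = 3$ then $\tfrac1{e_1} + \tfrac1{e_2} + \tfrac1{e_3} = 1 + \tfrac{2}{|G|} > 1$, and the classical analysis of this inequality yields only $(e_1,e_2,e_3) \in \{(2,2,k),(2,3,3),(2,3,4),(2,3,5)\}$ with $|G| = 2k, 12, 24, 60$; matching these to $\DG_{2k}$, $\AG_4$, $\SG_4$, $\AG_5$ respectively and computing $\ell_i = |G|/e_i$ gives the stated orbit lengths $(k,k,2)$, $(4,4,6)$, $(6,8,12)$, $(12,20,30)$.

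There is no serious obstacle here; the only bookkeeping point worth care is confirming that the solution $(2,2,k)$ is attached to the dihedral group (it cannot correspond to a cyclic group, since for cyclic $G$ the argument above already forces $r = 2$), after which the computation of $\ell_i$ in each of the five cases is routine arithmetic. One could alternatively simply cite \cite[4.4.6]{Spr77} for the same statement.
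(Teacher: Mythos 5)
Your Riemann--Hurwitz argument is correct, and it is worth noting that the paper itself gives no proof of this lemma at all: it is stated with a bare reference to Springer (4.4.6 of the cited book), so your write-up is a genuine, self-contained alternative to citation. The computation $2 - \tfrac{2}{|G|} = \sum_i \bigl(1 - \tfrac{1}{e_i}\bigr)$, the bound $r \in \{2,3\}$, the identification of $r=2$ with the cyclic case (the whole group then fixes two points, hence is cyclic, consistent with Lemma~\ref{fixedpoints}), and the list of triples $(2,2,k)$, $(2,3,3)$, $(2,3,4)$, $(2,3,5)$ with orbit lengths $|G|/e_i$ all check out against the statement, including the degenerate case $\DG_4 \cong \CG_2^2$ where the three orbits all have length $2$. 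The one place where your ``routine arithmetic'' glosses a real (if small) verification is the matching of triples to groups: the order and the equation alone do not always determine the branch data, e.g.\ for $|G|=12$ both $(2,2,6)$ and $(2,3,3)$ solve $\sum 1/e_i = 7/6$, and $\DG_{12}$ contains elements of orders $2$ and $3$, so you cannot dismiss $(2,3,3)$ for $\DG_{12}$ purely numerically. It is settled by a short count: points with stabilizer of even order come in one orbit of length $|G|/e$, each fixed by a single cyclic stabilizer, while every involution of $\mathrm{PGL}_2(\kka)$ fixes exactly two points; for $(2,3,3)$ this would give only $3$ involutions, whereas $\DG_{12}$ has $7$, so $\DG_{12}$ must have type $(2,2,6)$ (and similar counts, or the possible orders of cyclic stabilizers, pin down $\AG_4$, $\SG_4$, $\AG_5$). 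With that sentence added your proof is complete; alternatively, as you say, one can simply cite the classical reference as the paper does.
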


Now we want to construct some explicit representation of these groups.

We use the following notation:
$$
\mathrm{Id} = \mmatr{1}{0}{0}{1}, \quad I = \mmatr{-i}{0}{0}{i}, \quad J = \mmatr{0}{-1}{1}{0}, \quad K = \mmatr{0}{i}{i}{0},
$$
$$
R_k = \mmatr{\xi_k}{0}{0}{1}, \quad S = \mmatr{0}{1}{1}{0}, \quad \varphi = \frac{1 + \sqrt{5}}{2}.
$$

\begin{lemma}
\label{Fixeddefined}
Let $g \in \mathrm{PGL}_2 \left( \ka \right)$ be an element of finite order \mbox{$\ord g = k > 2$}. Then fixed points on $\Pro^1_{\ka}$ of the element $g$ are defined over $\ka$ if and only if the field $\ka$ contains an element $\xi_k$.
\end{lemma}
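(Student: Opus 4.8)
The plan is to diagonalize $g$ over $\kka$ and track exactly which field the eigenvalue ratio lives in. First I would lift $g$ to a matrix $\widetilde{g} \in \mathrm{GL}_2(\ka)$; since $\ord g = k$, the image of $g$ in $\mathrm{PGL}_2(\kka)$ is conjugate to $R_k = \operatorname{diag}(\xi_k, 1)$, so $\widetilde{g}$ has two distinct eigenvalues $\lambda_1, \lambda_2 \in \kka$ with $\lambda_1/\lambda_2$ a primitive $k$-th root of unity. The two fixed points of $g$ on $\Pro^1_{\kka}$ are precisely the two eigenlines of $\widetilde{g}$. These fixed points are each individually defined over $\ka$ if and only if the eigenvalues $\lambda_1, \lambda_2$ lie in $\ka$ (after scaling $\widetilde{g}$ we may normalize $\lambda_2 = 1$, so the condition becomes $\lambda_1 \in \ka$); and the key arithmetic point is that $\lambda_1$ is a primitive $k$-th root of unity times something in $\ka$ — in fact, working with the normalization $\det\widetilde g$ adjusted, the ratio $\xi_k = \lambda_1/\lambda_2$ is itself an element of $\ka$ exactly when both eigenlines are $\ka$-rational.

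The two directions would go as follows. If $\ka$ contains $\xi_k$, then the explicit element $R_k \in \mathrm{PGL}_2(\ka)$ has order $k$ and fixed points $(1:0)$ and $(0:1)$, both defined over $\ka$; any $g$ of order $k$ is $\mathrm{PGL}_2(\kka)$-conjugate to $R_k$, but to get $\ka$-rationality of the \emph{individual} fixed points I would instead argue directly: the characteristic polynomial of $\widetilde g$ has coefficients in $\ka$, its roots have ratio $\xi_k \in \ka$, and one checks the roots themselves then lie in $\ka$ (write $\lambda_2 = t$, $\lambda_1 = \xi_k t$; the trace $t(1+\xi_k)$ and product $\xi_k t^2$ are in $\ka$, so if $1 + \xi_k \ne 0$ then $t \in \ka$; the case $1 + \xi_k = 0$ forces $k = 2$, excluded since $k > 2$). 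Hence the eigenspaces are defined over $\ka$, i.e. the fixed points are $\ka$-rational. Conversely, if both fixed points are defined over $\ka$, conjugate $g$ by an element of $\mathrm{PGL}_2(\ka)$ sending them to $(1:0)$ and $(0:1)$; then $g$ becomes $\operatorname{diag}(a, 1)$ with $a \in \ka^*$, and $\ord g = k$ forces $a$ to be a primitive $k$-th root of unity, so $\xi_k \in \ka$ (up to replacing $\xi_k$ by another primitive $k$-th root, which amounts to the same field-membership statement since $\ka$ has characteristic zero and $\ka(\xi_k) = \ka(\zeta)$ for any primitive $k$-th root $\zeta$).

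The main obstacle is the bookkeeping in the forward direction: one must be careful that "$\ka$ contains $\xi_k$" is used in the sense that $\ka$ contains \emph{some} primitive $k$-th root of unity (equivalently all of them, by the cyclotomic field argument), and that the hypothesis $k > 2$ is genuinely needed to rule out the degenerate trace-zero situation where the eigenvalue ratio is $-1$ but the individual eigenvalues need not be in $\ka$ — this is exactly why the case $\ord g = 2$ is excluded from the statement. I would also remark that Lemma \ref{fixedpoints} is not strictly needed here, but it clarifies that the pair of fixed points is intrinsic to $g$; the content of this lemma is purely about the field of definition of that pair versus its individual members.
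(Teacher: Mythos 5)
Your proof is correct and follows essentially the same route as the paper: lift $g$ to a matrix over $\ka$, identify the fixed points with the eigenlines, recover the eigenvalues from the trace divided by $1+\xi_k$ (legitimate exactly because $k>2$), and get the converse from the $\ka$-rationality of the eigenvalue ratio. Your phrasing of the converse via $\ka$-conjugation to $\operatorname{diag}(a,1)$ and your explicit remark about which primitive $k$-th root is meant are only cosmetic refinements of the paper's argument.
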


\begin{proof}
Assume that the element $g$ is represented in $\mathrm{PGL}_2 \left( \ka \right)$ as an operator $\widetilde{R}_k$. Then over $\kka$ the operator $\widetilde{R}_k$ is conjugate to $\lambda R_k$, $\lambda \in \kka$. One has
$$
\operatorname{tr} \widetilde{R}_k = \lambda \left( \xi_k + 1 \right) \in \ka \qquad \text{and} \qquad \operatorname{det} \widetilde{R}_k = \lambda^2 \xi_k \in \ka.
$$
Note that fixed points of $\widetilde{R}_k$ are defined over $\ka$ if and only if eigenvalues $\lambda \xi_k$ and $\lambda$ are defined over $\ka$. If $\xi_k \in \ka$ and $k \ne 2$ then
$$
\lambda = \frac{\operatorname{tr} \widetilde{R}_k}{\xi_k + 1}
$$
\noindent and $\lambda \xi_{k}$ are defined over $\ka$. If the eigenvalues $\lambda \xi_k \in \ka$ and $\lambda \in \ka$ then $\xi_k \in \ka$.
\end{proof}

\begin{lemma}
\label{Crepr}
Let a field $\ka$ contain element $\xi_k$. Then there is a cyclic subgroup $\CG_k \subset \mathrm{PGL}_2\left( \ka \right)$ such that the fixed points of its generator on $\Pro^1_{\ka}$ are defined over $\ka$.
\end{lemma}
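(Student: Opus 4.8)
The plan is to exhibit the cyclic subgroup explicitly using the matrix $R_k = \operatorname{diag}(\xi_k, 1)$ introduced above, and to check that it satisfies the required properties. First I would observe that since $\xi_k \in \ka$ by hypothesis, the matrix $R_k$ has entries in $\ka$, so it defines an element $g \in \mathrm{PGL}_2(\ka)$; the group $\langle g \rangle$ is cyclic of order $k$ because $R_k^k = \mathrm{Id}$ and no smaller power is scalar (the ratio of the two diagonal entries is $\xi_k^j$, which is $1$ only when $k \mid j$). Thus $\CG_k \cong \langle g \rangle \subset \mathrm{PGL}_2(\ka)$.

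Next I would identify the fixed points of $g$ on $\Pro^1_{\ka}$. In the standard coordinates $[u : v]$ on $\Pro^1$, the action of $R_k$ sends $[u:v]$ to $[\xi_k u : v]$, so the fixed points are exactly $[1:0]$ and $[0:1]$. Both of these are manifestly $\ka$-points of $\Pro^1_{\ka}$, so the fixed locus of the generator is defined over $\ka$, as required. (When $k > 2$ this also matches the criterion of Lemma~\ref{Fixeddefined}: $R_k$ is already in diagonal form, so $\lambda = 1$ and the eigenvalues $\xi_k, 1$ lie in $\ka$. For $k = 2$ the two fixed points $[1:0], [0:1]$ are still individually $\ka$-rational, and for $k=1$ the statement is vacuous since $g$ is the identity.)

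This completes the argument; there is no real obstacle here, as the whole point of the lemma is to record that the diagonal model $R_k$ works whenever the base field already contains the relevant root of unity. The only mild subtlety worth flagging is the degenerate small cases $k \le 2$, where "fixed points of the generator" should be read as the two coordinate points rather than invoking Lemma~\ref{Fixeddefined}; but since those points are $\ka$-rational anyway, the conclusion holds uniformly.
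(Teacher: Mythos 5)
Your proposal is correct and follows exactly the paper's own argument: take $R_k = \operatorname{diag}(\xi_k,1)$, which lies in $\mathrm{PGL}_2(\ka)$ since $\xi_k\in\ka$, generates a cyclic group of order $k$, and fixes the $\ka$-rational points $(1:0)$ and $(0:1)$. The extra checks you include (that no smaller power of $R_k$ is scalar, and the small cases $k\le 2$) are fine but go beyond what the paper spells out; the substance is the same.
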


\begin{proof}
The element $R_k$ is defined over $\ka$ and generates a group $\CG_k$. The fixed points of $R_k$ on $\Pro^1_{\ka}$ are $(1 : 0)$ and $(0 : 1)$. These points are defined over $\ka$.
\end{proof}

\begin{lemma}
\label{Drepr}
Let a field $\ka$ contain element $\cos \frac{2\pi}{k}$. Then there is a subgroup $\DG_{2k} \subset \mathrm{PGL}_2\left( \ka \right)$. In this group there is an element of order $2$ such that its fixed points on $\Pro^1_{\ka}$ are defined over $\ka$.
\end{lemma}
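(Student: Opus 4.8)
The plan is to write down an explicit faithful representation of $\DG_{2k}$ in $\mathrm{PGL}_2(\ka)$ using only the scalar $c = \cos\frac{2\pi}{k}$, which lies in $\ka$ by hypothesis. The key observation is that although $\xi_k = e^{2\pi i/k}$ need not lie in $\ka$, the rotation by $\frac{2\pi}{k}$ admits a real-type matrix representative whose entries involve only $\cos\frac{2\pi}{k}$ and $\sin\frac{2\pi}{k}$; to avoid needing $\sin\frac{2\pi}{k}$, I would instead realize the order-$k$ cyclic part via a companion-type matrix. Concretely, I would take the element
$$
A = \mmatr{2c}{-1}{1}{0},
$$
which has characteristic polynomial $t^2 - 2c\,t + 1$, hence eigenvalues $\xi_k^{\pm 1}$, so $A$ has order $k$ in $\mathrm{PGL}_2(\ka)$ (one checks the image in $\mathrm{PGL}_2$ has exact order $k$ because the ratio of eigenvalues is $\xi_k^2$, a primitive $k$-th or $(k/\gcd(k,2))$-th root; a minor case check for $k$ even versus odd handles the precise order, and if needed one adjusts to get order exactly $k$). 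Then I would look for an involution $B \in \mathrm{GL}_2(\ka)$ with $BAB^{-1} = A^{-1}$, i.e. $B$ conjugates $A$ to its inverse; since $A$ and $A^{-1}$ are conjugate over $\ka$ (same characteristic polynomial, and $2\times 2$), such a $B$ exists over $\ka$, and one can take for instance $B = \mmatr{1}{0}{2c}{-1}$ or, more symmetrically, $B = \mmatr{0}{1}{1}{0} = S$ after first conjugating $A$ into a symmetric-looking form. The group $\langle A, B\rangle$ is then a copy of $\DG_{2k}$ in $\mathrm{PGL}_2(\ka)$.

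The second assertion — that this $\DG_{2k}$ contains an order-$2$ element with $\ka$-rational fixed points on $\Pro^1_{\ka}$ — I would handle by exhibiting one reflection explicitly. An order-$2$ element of $\mathrm{PGL}_2(\ka)$ has $\ka$-rational fixed points precisely when its two eigenvalues (equivalently, the associated quadratic form fixing the two points) are defined over $\ka$; this always holds for a suitable choice of reflection in the dihedral group because the two fixed points of a reflection are the two points of the size-$2$ orbit's ``axis,'' and at least one conjugacy class of reflections can be arranged to have eigenvectors in $\ka^2$. I would pick the specific involution $B$ above and just compute its eigenvectors: for $B = \mmatr{1}{0}{2c}{-1}$ the eigenvalues are $1, -1 \in \ka$ with eigenvectors $(c, 1)^T$ and $(0,1)^T$, both $\ka$-rational, so the corresponding points $(c : 1)$ and $(0 : 1)$ on $\Pro^1_{\ka}$ are defined over $\ka$. (If the naive choice of $B$ turns out not to normalize $\langle A\rangle$ correctly, I would solve the linear system $BA = A^{-1}B$ for $B$ over $\ka$ and then verify $\operatorname{tr} B = 0$ can be achieved; the solution space is $2$-dimensional, leaving room to impose $\operatorname{tr} B = 0$.)

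The main obstacle I anticipate is purely bookkeeping: pinning down that the image of $A$ in $\mathrm{PGL}_2(\ka)$ has order exactly $k$ (not a proper divisor) and that $\langle A, B\rangle$ has order exactly $2k$ (not smaller, i.e. $B \notin \langle A\rangle$), together with the parity subtlety that for $k$ even one has $\xi_k^2$ of order $k/2$ in $\mathrm{PGL}_2$ forcing a slightly different normalization — e.g. replacing $A$ by a square root or using $\mmatr{2c}{-1}{1}{0}$ conjugated appropriately. None of this is deep; it is the kind of case analysis already implicit in Lemma~\ref{Fixeddefined}. There is a genuine choice to be made for whether the rational reflection should be the ``short'' reflection or the ``long'' one, but since we only need \emph{some} order-$2$ element with rational fixed points, the explicit eigenvector computation above settles it. Everything else follows from the standard presentation $\DG_{2k} = \langle a, b \mid a^k = b^2 = 1,\ bab^{-1} = a^{-1}\rangle$ and the fact (used already in the proof of Lemma~\ref{Crepr}) that $\mathrm{PGL}_2(\ka) \subset \mathrm{PGL}_2(\kka)$ so that the abstract group type is detected over $\kka$ by Theorem~\ref{PGL2class}.
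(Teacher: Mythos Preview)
Your strategy---write down an explicit order-$k$ element and an explicit reflection conjugating it to its inverse---is exactly the paper's approach, and your involution $B = \mmatr{1}{0}{2c}{-1}$ does satisfy $BAB^{-1} = A^{-1}$ with $\ka$-rational fixed points. However, the parity issue you flag is not the ``minor case check'' you suggest: it is the one genuine obstacle, and your proposed fixes do not work.

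Your matrix $A = \mmatr{2c}{-1}{1}{0}$ has $\det A = 1$ and eigenvalues $\xi_k^{\pm 1}$, so its image in $\mathrm{PGL}_2(\ka)$ has order equal to the order of $\xi_k^2$, which is $k/2$ when $k$ is even. Neither ``replacing $A$ by a square root'' nor ``conjugating appropriately'' repairs this: conjugation does not change the order, and a square root would require an element with eigenvalue ratio $\xi_k$, i.e.\ a matrix $M \in \mathrm{GL}_2(\ka)$ with $\operatorname{tr}(M)^2/\det(M) = 4\cos^2\frac{\pi}{k}$. The companion-matrix ansatz forces $\det = 1$, hence $\operatorname{tr}(M) = \pm 2\cos\frac{\pi}{k}$, and $\cos\frac{\pi}{k}$ need not lie in $\ka$.

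The paper's resolution is precisely to drop the unit-determinant constraint. From $\cos\frac{2\pi}{k} \in \ka$ one has $\cos^2\frac{\pi}{k} = \tfrac{1}{2}\bigl(1+\cos\frac{2\pi}{k}\bigr) \in \ka$, and the matrix
\[
\widetilde{R}_k = \mmatr{4\cos^2\frac{\pi}{k}-1}{-1}{1}{1}
\]
has $\operatorname{tr}(\widetilde{R}_k)^2/\det(\widetilde{R}_k) = 4\cos^2\frac{\pi}{k}$, giving eigenvalue ratio $\xi_k^{\pm 1}$ and hence exact order $k$ in $\mathrm{PGL}_2(\ka)$ for all $k$. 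The reflection is then the standard $S = \mmatr{0}{1}{1}{0}$, with fixed points $(1:1)$ and $(-1:1)$. Once you incorporate this adjustment your argument goes through; the rest of your proposal (verifying $B \notin \langle A\rangle$, computing eigenvectors of the reflection) is routine, as you say.
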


\begin{proof}
If a field $\ka$ contains $\cos \frac{2\pi}{k}$ then $\ka$ contains $\cos^2 \frac{\pi}{k}$. Consider an element
$$
\widetilde{R}_k = \mmatr{4\cos^2 \frac{\pi}{k} - 1}{-1}{1}{1}.
$$
\noindent One has $\ord \widetilde{R}_k = k$. Moreover, $S \widetilde{R}_k S^{-1} = \widetilde{R}_k^{-1}$. Therefore elements $\widetilde{R}_k$ and $S$ generate a group $\DG_{2k}$. The fixed points of $S$ on $\Pro^1_{\ka}$ are $(1 : 1)$ and $(-1 : 1)$. These points are defined over $\ka$.
\end{proof}

\begin{remark}
\label{Drepr1}
Note that if a field $\ka$ does not contain element $\cos \frac{2\pi}{k}$ then there are no elements of order $k$ in $\mathrm{PGL}_2\left( \ka \right)$.
\end{remark}

\begin{lemma}
\label{A4repr}
Let a field $\ka$ contain element $i$. Then there is a subgroup $\AG_{4} \subset \mathrm{PGL}_2\left( \ka \right)$. In this group there is an element of order $2$ such that its fixed points on $\Pro^1_{\ka}$ are defined over $\ka$.
\end{lemma}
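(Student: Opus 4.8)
The strategy is to write down an explicit subgroup of $\mathrm{PGL}_2(\ka)$ isomorphic to $\AG_4$, using the matrices introduced just above the lemma, and to exhibit an order-$2$ element in it whose fixed points are rational. Recall that $\AG_4$ has a presentation with two generators: an element of order $2$ and an element of order $3$, or alternatively it is generated by its Klein four-subgroup $\DG_4 \cong \CG_2^2$ together with an order-$3$ element cyclically permuting the three involutions. The first step is therefore to build the Klein four-group: I would take the involutions $I = \operatorname{diag}(-i, i)$, $J = \bigl(\begin{smallmatrix} 0 & -1 \\ 1 & 0 \end{smallmatrix}\bigr)$, and $K = \bigl(\begin{smallmatrix} 0 & i \\ i & 0 \end{smallmatrix}\bigr)$ (these are, up to scalars, the quaternion units), and check that $I^2 = J^2 = K^2 = -\mathrm{Id}$ in $\mathrm{GL}_2$, hence each has order $2$ in $\mathrm{PGL}_2$, and that $IJ = K$ up to sign, so $\langle I, J, K \rangle$ is a copy of $\CG_2^2$ in $\mathrm{PGL}_2(\ka)$ — this uses $i \in \ka$.

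The second step is to adjoin an order-$3$ element that normalizes this Klein four-group by cyclically permuting $I, J, K$. A standard choice is $T = \tfrac12\bigl(\begin{smallmatrix} -1+i & -1+i \\ 1+i & -1-i \end{smallmatrix}\bigr)$ or some scalar multiple thereof; one checks $T^3 = \pm\mathrm{Id}$ and that conjugation by $T$ sends $I \mapsto J \mapsto K \mapsto I$ (up to scalars), which again only requires $i \in \ka$. Then $\langle I, J, T \rangle$ has order $12$ and is isomorphic to $\AG_4$, and it lies in $\mathrm{PGL}_2(\ka)$. The third step is to identify a good involution: the fixed points of $J$ on $\Pro^1$ are $(1 : i)$ and $(1 : -i)$, which are defined over $\ka$ since $i \in \ka$; alternatively the fixed points of $I$ are $(1:0)$ and $(0:1)$, which are manifestly rational — so one simply points to $I$ (or $J$) as the desired element of order $2$ with $\ka$-rational fixed points.

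I do not anticipate a genuine obstacle here; the content is a finite explicit computation. The only point requiring a little care is choosing the matrix entries for the order-$3$ element so that it actually has order $3$ in $\mathrm{PGL}_2$ (not order $6$ or larger) and genuinely conjugates the three quaternion involutions into one another rather than fixing one of them — getting the scalars and signs right in $\mathrm{GL}_2$ before passing to $\mathrm{PGL}_2$. Everything else follows by the well-known fact that a group generated by $\CG_2^2$ and an order-$3$ element acting on it by a $3$-cycle is $\AG_4$, together with Theorem~\ref{PGL2class} guaranteeing there is nothing else such a group could be.
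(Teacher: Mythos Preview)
Your proposal is correct and takes essentially the same approach as the paper: the paper also uses the quaternion involutions $I$, $J$, $K$ (defined over $\ka$ since $i\in\ka$) together with the order-$3$ element $\mathrm{Id}+I+J+K$ to generate $\AG_4$, and then points to $I$, whose fixed points $(1:0)$ and $(0:1)$ are manifestly $\ka$-rational. The only cosmetic difference is that the paper writes its order-$3$ element as the sum $\mathrm{Id}+I+J+K$ rather than your matrix $T$, and does not spell out the verification that conjugation cycles $I,J,K$.
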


\begin{proof}
Note that elements $I$, $J$ and $K$ are defined over $\ka$. The elements $I$ and $\mathrm{Id} + I + J + K$ generate the group $\AG_4$. The fixed points of $I$ on $\Pro^1_{\ka}$ are $(1 : 0)$ and $(0 : 1)$. These points are defined over $\ka$.
\end{proof}

\begin{lemma}
\label{S4repr}
Let a field $\ka$ contain element $i$ or $i\sqrt{2}$. Then there is a subgroup $\SG_{4} \subset \mathrm{PGL}_2\left( \ka \right)$. In this group there is an element of order $2$ such that its fixed points on $\Pro^1_{\ka}$ are defined over $\ka$.
\end{lemma}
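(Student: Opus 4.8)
The plan is to split on the two hypotheses: if $i\in\ka$ I would enlarge the group $\AG_4\subset\mathrm{PGL}_2(\ka)$ of Lemma~\ref{A4repr}, and if $i\sqrt2\in\ka$ I would instead exhibit $\SG_4$ directly through its $(2,3,4)$ triangle presentation. So first assume $i\in\ka$. By Lemma~\ref{A4repr} we have $\AG_4=\langle I,\ \mathrm{Id}+I+J+K\rangle\subset\mathrm{PGL}_2(\ka)$, with $I$ an involution whose fixed points $(1:0),(0:1)$ are defined over $\ka$. I would adjoin $T=\mmatr{1}{i}{i}{1}\in\mathrm{PGL}_2(\ka)$. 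A short matrix computation gives $T^2=2iS$ with $S=\mmatr{0}{1}{1}{0}$, so $T^2$ represents the same element of $\mathrm{PGL}_2$ as $K=iS\in\AG_4$; likewise $TIT^{-1}=J$, $TJT^{-1}=-I$ and $TST^{-1}=S$, hence $T(\mathrm{Id}+I+J+K)T^{-1}=\mathrm{Id}-I+J+K$, which is again an element of $\AG_4$ (it is a scalar multiple of one of the order-$3$ representatives $\tfrac{1}{2}(\pm\mathrm{Id}\pm I\pm J\pm K)$ of the binary tetrahedral group). Thus $T$ normalizes $\AG_4$. Since $T^2=2iS$ is not scalar while $T^4=-4\,\mathrm{Id}$, the element $T$ has order $4$ in $\mathrm{PGL}_2$, so $T\notin\AG_4$; together with $T^2\in\AG_4$ this gives $[\langle\AG_4,T\rangle:\AG_4]=2$, so $\langle\AG_4,T\rangle$ is finite of order $24$ and contains $\AG_4$. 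By Theorem~\ref{PGL2class} the only such subgroup of $\mathrm{PGL}_2(\kka)$ is $\SG_4$, because neither $\CG_{24}$ nor $\DG_{24}$ contains a copy of $\AG_4$. The involution $I$ is then the one required by the lemma.

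Now assume $i\sqrt2\in\ka$. The construction above is unavailable, but $-2=(i\sqrt2)^2+0^2$ is a sum of two squares in $\ka$: fix $a,b\in\ka$ with $a^2+b^2=-2$ and put
$$
\sigma=\mmatr{1}{-1}{1}{1},\qquad\omega=\mmatr{1+a}{-1+b}{1+b}{1-a}.
$$
Then $\sigma^2=\mmatr{0}{-2}{2}{0}$ is not scalar while $\sigma^4=-4\,\mathrm{Id}$, so $\sigma$ has order $4$ in $\mathrm{PGL}_2$; the matrix $\omega$ has trace $2$ and, by $a^2+b^2=-2$, determinant $4$, so (as $2^2=4$) it represents an element of order $3$; and $\sigma\omega$ has trace $0$ and is not scalar, so it represents an element of order $2$. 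Therefore $s\mapsto\sigma$, $u\mapsto\omega$ is compatible with the classical presentation $\SG_4=\langle s,u\mid s^4=u^3=(su)^2=1\rangle$, so $\langle\sigma,\omega\rangle$ is a quotient of $\SG_4$; since it contains the order-$4$ element $\sigma$ and every proper quotient of $\SG_4$ has exponent dividing $6$, we conclude $\langle\sigma,\omega\rangle\cong\SG_4\subset\mathrm{PGL}_2(\ka)$. Finally $\det(\sigma\omega)=\det\sigma\cdot\det\omega=8$ and $\sigma\omega$ is traceless, so its eigenvalues are $\pm2i\sqrt2\in\ka$ and the two fixed points of $\sigma\omega$ on $\Pro^1_{\ka}$ are defined over $\ka$. (When $i\in\ka$ one may also run this same construction, with $a=b=i$, and take the involution $\sigma^2$, whose fixed points $(1:i),(1:-i)$ are defined over $\ka$.)

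I expect the second case to be the main obstacle. Without $i$ one loses the model of $\AG_4$ from Lemma~\ref{A4repr}, and two points need care: first, that the group generated by the chosen matrices is exactly $\SG_4$ and not an infinite subgroup of $\mathrm{PGL}_2(\ka)$, which the triangle presentation settles cleanly; and second, the choice of a \emph{suitable} involution. A double transposition such as $\sigma^2$ has fixed points defined only over $\ka(i)$, so one is forced to the transposition $\sigma\omega$, and the relevant observation is that $\det(\sigma\omega)=8$ drives its eigenvalues into $\ka$ as soon as $i\sqrt2\in\ka$. By comparison the remaining verifications—the handful of $2\times2$ products evaluating $T^2$, $TIT^{-1}$ and the like, together with the order computations—are routine.
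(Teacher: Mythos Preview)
Your proof is correct. Both cases are handled soundly: the normaliser/index argument together with Theorem~\ref{PGL2class} pins down $\SG_4$ in the first case, and the $(2,3,4)$ triangle presentation plus the ``no order-$4$ element in a proper quotient of $\SG_4$'' observation does the same in the second. The eigenvalue computation $\det(\sigma\omega)=8$, $\operatorname{tr}(\sigma\omega)=0$ giving $\pm 2i\sqrt{2}\in\ka$ is exactly the right way to locate a transposition with $\ka$-rational fixed points when only $i\sqrt{2}$ is available.

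The paper takes a different, more uniform route. For $i\in\ka$ it simply adjoins the order-$2$ element $I+J$ to the generators of $\AG_4$ (rather than your order-$4$ element $T$), and for $i\sqrt{2}\in\ka$ it twists the quaternionic generators, replacing $I$ by $\widetilde{I}=\sqrt{2}\,I-iK=\mmatr{-i\sqrt{2}}{1}{1}{i\sqrt{2}}$ and writing down $\widetilde{I}$, $\mathrm{Id}+\widetilde{I}+J+\widetilde{I}J$, $\widetilde{I}+J$ by analogy with the $i\in\ka$ case; the required involution is $\widetilde{I}+J$, whose fixed points $(0:1)$ and $(-i\sqrt{2}:1)$ are visibly $\ka$-rational. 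The paper's construction keeps the same quaternionic template across Lemmas~\ref{A4repr}--\ref{A5repr} but asserts rather than verifies that the generators give $\SG_4$; your argument is longer but actually certifies the group, and the parametrisation by any $a,b$ with $a^2+b^2=-2$ is a pleasant extra flexibility.
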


\begin{proof}
If a field $\ka$ contains element $i$ then elements $I$, $J$ and $K$ are defined over $\ka$. The elements $I$, $\mathrm{Id} + I + J + K$ and $I + J$ generate the group $\SG_4$. The fixed points of $I$ on $\Pro^1_{\ka}$ are $(1 : 0)$ and $(0 : 1)$. These points are defined over $\ka$.

Let a field $\ka$ contain element $i\sqrt{2}$. Consider an element
$$
\widetilde{I} =  \sqrt{2} I - iK = \mmatr{-i\sqrt{2}}{1}{1}{i\sqrt{2}}.
$$
\noindent The elements $\widetilde{I}$, $\mathrm{Id} + \widetilde{I} + J + \widetilde{I}J$ and
$$
\widetilde{I} + J = \mmatr{-i\sqrt{2}}{0}{2}{i\sqrt{2}}
$$
\noindent generate the group $\SG_4$. The fixed points of $\widetilde{I} + J$ are $(0 : 1)$ and $(-i\sqrt{2} : 1)$. These points are defined over $\ka$.
\end{proof}

\begin{lemma}
\label{A5repr}
Let a field $\ka$ contain elements $i$ and $\varphi$. Then there is a subgroup $\AG_{5} \subset \mathrm{PGL}_2\left( \ka \right)$. In this group there is an element of order $2$ such that its fixed points on $\Pro^1_{\ka}$ are defined over $\ka$.
\end{lemma}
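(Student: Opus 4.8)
The plan is to mimic the strategy already used for $\AG_4$ and $\SG_4$: exhibit an explicit embedding $\AG_5 \hookrightarrow \mathrm{PGL}_2(\ka)$ built from the matrices $I$, $J$, $K$ together with a third generator involving the golden ratio $\varphi = \tfrac{1+\sqrt5}{2}$, and then point to an involution in the image whose fixed locus is a pair of $\ka$-points of $\Pro^1_{\ka}$. The group $\AG_5$ is generated by an element of order $2$ and an element of order $5$ (indeed $\AG_5 = \langle (1\,2)(3\,4),\ (1\,2\,3\,4\,5)\rangle$), so it suffices to write down in $\mathrm{GL}_2(\ka)$ a matrix $P$ of projective order $2$ and a matrix $Q$ of projective order $5$ whose product also has the right order, so that $\langle \bar P, \bar Q\rangle \cong \AG_5$. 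Since $i \in \ka$ the matrices $I$, $J$, $K$ are defined over $\ka$ and furnish the standard Klein four-subgroup $\langle \bar I, \bar J\rangle \cong \DG_4$ inside $\AG_5$; since $\varphi \in \ka$ one has $\sqrt5 = 2\varphi - 1 \in \ka$, which is exactly the quantity that appears in the classical binary icosahedral representation. Concretely I would take the order-$5$ generator to be a matrix such as
$$
Q = \frac{1}{2}\mmatr{\varphi - 1}{1}{-1}{-\varphi}
\qquad\text{(or a suitable $\ka$-scalar multiple thereof)},
$$
check $\operatorname{tr}^2 Q / \det Q$ gives projective order $5$, and verify by direct computation that $\langle \bar I, \bar Q \rangle$ (or $\langle \bar I,\ \widetilde A\rangle$ for an appropriate combination $\widetilde A$ of $I,J,K,Q$) is all of $\AG_5$ and not a proper subgroup — the only finite subgroups of $\mathrm{PGL}_2(\kka)$ containing a $\DG_4$ and an element of order $5$ are $\DG_{10}$, $\DG_{20}$ (if it existed, which it does not have order-$5$ elements commuting appropriately) and $\AG_5$, so ruling out the dihedral case is a short order count. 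The involution $\bar I$ has fixed points $(1:0)$ and $(0:1)$ on $\Pro^1_{\ka}$, which are visibly defined over $\ka$, giving the second assertion.

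The key steps, in order, are: (1) record that $\sqrt5 \in \ka$ because $\varphi \in \ka$, and that $i \in \ka$ makes $I, J, K$ defined over $\ka$; (2) write down explicit $\ka$-matrices for an order-$2$ and an order-$5$ element and, if convenient, a third element, all with entries in $\Z[i,\varphi]$; (3) verify the projective orders by the trace/determinant computation, the relations $(\bar P\bar Q)^{?}=1$ witnessing $\AG_5$, and use Theorem~\ref{PGL2class} plus the list of possible finite subgroups to conclude the generated group is exactly $\AG_5$ rather than a proper subgroup or $\SG_4$/$\AG_5$ ambiguity; (4) observe that $I$ (or whichever of the explicit involutions is chosen) has eigenvectors $(1:0)$, $(0:1)$, so its fixed points are $\ka$-rational.

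The main obstacle is step (3): one must be sure the chosen pair of generators really generates $\AG_5$ and not a smaller group — in particular not $\DG_{10}$ — which requires either exhibiting an explicit word of order $3$ in the generators or invoking the classification in Theorem~\ref{PGL2class} together with a counting argument that no group in that list other than $\AG_5$ simultaneously contains the Klein four-group $\langle \bar I,\bar J\rangle$ and an element of order $5$. Everything else is a routine matrix computation of the same flavor as Lemmas~\ref{A4repr} and~\ref{S4repr}, and the $\ka$-rationality of the fixed points of $I$ is immediate once $i \in \ka$.
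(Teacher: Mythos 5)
Your overall strategy is the paper's: realize $\AG_5$ inside $\mathrm{PGL}_2(\ka)$ by explicit matrices built from $I$, $J$, $K$ and the golden ratio (using $i\in\ka$ and $\sqrt5=2\varphi-1\in\ka$), and then observe that the involution $\bar I$ has fixed points $(1:0)$, $(0:1)$, which are $\ka$-rational; the paper's generators are $I$, $\mathrm{Id}+I+J+K$ and $\mathrm{Id}+\varphi I+\varphi^{-1}J$. However, as written your construction does not go through. Your candidate order-$5$ generator is not an element of $\mathrm{PGL}_2(\ka)$ at all: for $Q=\frac12\mmatr{\varphi-1}{1}{-1}{-\varphi}$ one has
$\det\mmatr{\varphi-1}{1}{-1}{-\varphi}=-(\varphi-1)\varphi+1=-\varphi^2+\varphi+1=0$
since $\varphi^2=\varphi+1$, and no scalar multiple repairs a singular matrix. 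The trace/determinant test you propose would have detected this, but you leave it (and the generation check) as a promissory note, so the key step of the lemma --- actually exhibiting the group over $\ka$ --- is missing. A working choice is the paper's $\mathrm{Id}+\varphi I+\varphi^{-1}J$, which is an order-$3$ element with determinant $1+\varphi^2+\varphi^{-2}=4\neq 0$.

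The fallback classification argument is also flawed as stated. First, the group $\langle\bar I,\bar Q\rangle$ you propose need not contain $\bar J$, so you cannot assume it contains the Klein four-group $\langle\bar I,\bar J\rangle$. Second, the counting claim is wrong: $\DG_{10}$ contains no Klein four-group, whereas $\DG_{20}$ does contain both a Klein four-group and elements of order $5$, so ``contains $\DG_4$ and an element of order $5$'' does not single out $\AG_5$. A correct shortcut, compatible with Theorem~\ref{PGL2class}, is that a finite subgroup of $\mathrm{PGL}_2(\kka)$ generated by an involution $\bar I$ and an element $\bar Q$ of order $5$ is $\CG_{10}$, $\DG_{10}$ or $\AG_5$, and in the first two cases $\bar I$ normalizes $\langle\bar Q\rangle$; hence it suffices to check $\bar I\bar Q\bar I^{-1}\notin\langle\bar Q\rangle$, or alternatively to exhibit an order-$3$ element, which is in effect what the paper does by listing $\mathrm{Id}+I+J+K$ and $\mathrm{Id}+\varphi I+\varphi^{-1}J$ among its generators. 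The final assertion about the fixed points of $I$ being defined over $\ka$ is fine and identical to the paper's.
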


\begin{proof}
Note that elements $I$, $J$ and $K$ are defined over $\ka$. The elements $I$, $\mathrm{Id} + I + J + K$ and $\mathrm{Id} + \varphi I + \varphi^{-1} J$ generate the group $\AG_5$. The fixed points of $I$ on $\Pro^1_{\ka}$ are $(1 : 0)$ and $(0 : 1)$. These points are defined over~$\ka$.
\end{proof}

\section{Geometry of fibres and sections}

In this section we prove the following theorem.

\begin{theorem}
\label{Cbundle}
Let $X$ be a $G$-surface that admits a $G$-equivariant conic bundle structure \mbox{$\varphi: X \rightarrow B$}. Then any relative MMP-reduction $Y$ over $B / G$ of the quotient $X / G$ admits a conic bundle structure. Denote by $G_B$ the image of $G$ under the natural map $\Aut(X) \rightarrow \Aut(B)$. Then

(i) If $K_X^2 \geqslant 5$ then $K_Y^2 \geqslant 5$;

(ii) If $K_X^2 < 5$ then $K_Y^2 \geqslant K_X^2$. If furthermore $K_X^2 = K_Y^2$ then $K_X^2 = K_Y^2 = 4$ and $G_B \cong \CG_2$ or $G_B \cong \DG_4$.

\end{theorem}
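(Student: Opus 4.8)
The plan is to reduce everything to counting singular fibres. First I would replace $X$ by a relative $G$-MMP-reduction of itself over $B$: this leaves $X/G$ unchanged up to $\ka$-birational equivalence over $B/G$, hence leaves $Y$ unchanged, and can only increase $K_X^2$, so the inequalities for the new surface imply them for the old one; thus I may assume $X$ is relatively $G$-minimal. Set $N=\ker(G\to\Aut(B))\trianglelefteq G$, so that $G/N\cong G_B$; since conic bundles here are over $\Pro^1$ we have $B\cong\Pro^1_\ka$, $B/G_B\cong\Pro^1$, and $G_B\subseteq\mathrm{PGL}_2(\ka)$ is one of the groups of Theorem~\ref{PGL2class}. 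An element of $N$ acting trivially on the generic fibre is trivial on $X$, so $N$ embeds in the automorphism group of the generic fibre $\cong\Pro^1$ and $\Pro^1/N\cong\Pro^1$; therefore the general fibre of $X/G\to B/G_B$ is a rational curve, so any relative MMP-reduction $Y$ is a relatively minimal fibration into rational curves over $\Pro^1$, and that it admits a conic bundle structure will follow from the local analysis below. Writing $K_Y^2=8-n$ and $K_X^2=8-n_X$, it remains to estimate $n$.

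I would compute $n$ along $\pi\colon\overline B\to\overline B/G_B$. If $\bar q\in\overline B/G_B$ is such that the points of $\pi^{-1}(\bar q)$ have trivial stabiliser and carry smooth fibres, then the quotient is (formally) locally a $\Pro^1$-bundle over $\bar q$, so $\overline Y$ has a smooth fibre there; hence every singular fibre of $\overline Y$ lies over a branch point of $\pi$ — of which, by Lemma~\ref{orbits}, there are two when $G_B\cong\CG_k$ ($k\ge2$) and three when $G_B\cong\DG_{2k}$, $\AG_4$, $\SG_4$, $\AG_5$ — or over the image of a singular fibre of $\overline\varphi$. At each such $\bar q$ pick $p\in\pi^{-1}(\bar q)$; its stabiliser $\CG_e\subseteq G_B$ is cyclic by Lemma~\ref{fixedpoints}, and $G_p\subseteq G$ is an extension of $\CG_e$ by a subgroup of $N$. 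Writing the local model of $\overline\varphi$ at $p$ — a trivial $\Pro^1$-bundle, or the node $\{xy=t\}$ — together with the fibration-preserving $G_p$-action, I would compute the quotient: it acquires cyclic quotient singularities whose Hirzebruch–Jung resolutions, after contracting the $(-1)$-curves lying in fibres, leave over $\bar q$ either a smooth fibre or exactly one singular fibre of $\overline Y$. Here the fact that the fixed locus of a finite-order automorphism of a smooth surface is smooth is used repeatedly to cut down the local possibilities (for instance, a fibrewise automorphism cannot fix a singular fibre pointwise).

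The rest is bookkeeping with $G_B$-orbits of the $n_X$ singular fibres of $\overline\varphi$. A free orbit has size $|G_B|$ and maps to a single point, producing at most one singular fibre of $\overline Y$; an orbit meeting a branch point has cyclic stabiliser of order $e$ and size $|G_B|/e$, and also produces at most one; and each branch point contributes at most one in total. For (i), if $n_X\le3$, then together with the $\le3$ branch points this forces $n\le3$ after a short check of the sub-cases according to $G_B$ and to which branch points carry singular fibres, i.e.\ $K_Y^2\ge5$. For (ii), if $n_X\ge4$, then collapsing the $G_B$-orbits of singular fibres more than compensates for the at most three singular fibres created at branch points unless $|G_B|$ is small with small orbits; the cases to be checked directly are $G_B\cong\CG_2$, $G_B\cong\CG_2^2\cong\DG_4$ and $G_B$ trivial (where $Y$ is a relative MMP-reduction of the purely fibrewise quotient $X/N$, which creates no singular fibre at a new base point), and they yield $n\le n_X$ with equality only when each collapsing orbit is as economical as possible and no singular fibre is created at a branch point with non-trivial fibre action — which occurs precisely for $n=n_X=4$ and $G_B\cong\CG_2$ or $\DG_4$. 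Lemma~\ref{rigidbundle} guarantees that $n$ is a genuine $\ka$-birational invariant once $K_Y^2\le0$; the remaining values of $K_Y^2$ are covered by the explicit small cases.

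The main obstacle is the local analysis of the second paragraph at the points $\bar q$ that are simultaneously branch points of $\pi$ and images of singular fibres of $\overline\varphi$, where the Hirzebruch–Jung chains of the two phenomena interact, and — for the equality clause of (ii) — making this analysis sharp rather than merely an inequality. A further subtlety, invisible over $\kka$ (where every such quotient relatively minimalises to a Hirzebruch surface), is that over a non-closed $\ka$ the invariant $n$ counts only those singular fibres of $Y$ whose components are not $\ka$-rationally contractible, so the local computations have to be carried out $\Gal(\kka/\ka)$-equivariantly; this is exactly where the groups $\CG_2$ and $\DG_4$ acquire their distinguished role.
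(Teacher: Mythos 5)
Your overall strategy---quotient out the fibrewise part, then bound the number of singular fibres of $\overline Y$ by sorting points of $\overline B/G_B$ into branch points (via Lemma~\ref{orbits}) and images of free orbits of singular fibres---is the same strategy the paper follows, where the count is organized in Proposition~\ref{basequotient} and Table~\ref{Table1}. But the content that makes such a count close is exactly what you defer. The paper's proof rests on local facts that your sketch never states, let alone proves: no element of $G_B$ interchanges the two components of a singular fibre (Lemma~\ref{permutation}, proved by a reduction to a del Pezzo surface of degree $6$ and a Lefschetz fixed-point count); a fibre invariant under an element of even order is necessarily smooth (Lemma~\ref{eveninvariantfibre}); and a smooth fibre invariant under a maximal odd-order cyclic stabilizer gives a smooth fibre of $Y$, because for odd $k$ the two Hirzebruch--Jung chains of $k/a$ and $k/(k-a)$ are never symmetric, so $\Gal(\kka/\ka)$ cannot obstruct the contraction (Lemma~\ref{quotientoffibre}). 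Your local statement at a point $\bar q$ (``either a smooth fibre or exactly one singular fibre of $\overline Y$'') is true of any conic bundle and carries no information, and the ``short check of the sub-cases'' that is supposed to deliver (i) and the sharp equality clause of (ii) is asserted rather than performed; made precise, that check is exactly Table~\ref{Table1}.

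The most concrete danger spot is your treatment of the kernel $N$. Saying that the fibrewise quotient ``creates no singular fibre at a new base point'' is correct but does not control the equality case, because a singular fibre of $X$ can \emph{survive} the quotient by an odd-order element of $N$ acting at the node as $\operatorname{diag}(\zeta,\zeta^{-1})$ while $\Gal(\kka/\ka)$ interchanges the two components: the resolved image fibre is a chain $(-1),(-2),\ldots,(-2),(-1)$ whose Galois symmetry forces any relatively minimal model over $\ka$ to retain a singular fibre there---the same mechanism that drives Lemma~\ref{quotientofoddfibre} and the nonrationality examples of Section~6. Hence the number of singular fibres need not drop at the fibrewise step, and your equality analysis for small $G_B$ (in particular the case where $G_B$ is trivial or $N$ is large) cannot be settled by the phrase ``as economical as possible''; it needs an explicit Galois-equivariant computation of the fibres of the quotient, which is precisely the part of the argument that is missing from the proposal.
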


\begin{remark}
\label{Galoisunirat}
Any del Pezzo surface $X$ with $K_X^2 \leqslant 3$ and $\rho(X) = 1$ is not birationally equivalent to a conic bundle (see \cite[Theorem 5.7]{Man67}). Therefore by Theorem \ref{Cbundle} such a surface is not birationally equivalent to any quotient of any conic bundle.
\end{remark}

Throughout this section we use the following notation.

Let a finite group $G$ faithfully act on a relatively $G$-minimal conic bundle \mbox{$\varphi: X \rightarrow B \cong \Pro^1_{\ka}$} and $n$ be the number of singular fibres of $\varphi$ over $\kka$. The morphism $\varphi$ is $G$-equivariant. It means that there exists an exact sequence:
$$
1 \rightarrow G_F \hookrightarrow G \twoheadrightarrow G_B \rightarrow 1
$$
where $G_F$ is a group of automorphisms of the generic fibre, acting trivially on the base $B \cong \Pro^1_{\ka}$, and $G_B$ is a group of automorphisms of~$B$.

Note that the surface $X / G_F$ admits a natural structure of $G_B$-equivariant bundle $X / G_F \rightarrow B$.

\begin{lemma}
\label{fibrequotient}
General fibre of the bundle $X / G_F \rightarrow B$ is a smooth conic. Let $Y \rightarrow B$ be a relative $G_B$-MMP-reduction of the bundle $X / G_F \rightarrow B$. The number of singular fibres of $\overline{Y} \rightarrow \overline{B}$ is less or equal to $n$.
\end{lemma}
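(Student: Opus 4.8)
The plan is to analyze what happens fiber-by-fiber under the quotient by $G_F$, exploiting that $G_F$ acts trivially on $B$ and hence preserves each fiber of $\varphi$. First I would establish the statement about the general fiber. Over the generic point $\eta$ of $B$, the fiber $X_\eta$ is a smooth conic over the function field $\ka(B)$, and $G_F$ acts faithfully on it by automorphisms; since $G_F$ fixes $B$ pointwise, the quotient $X_\eta / G_F$ is a curve over $\ka(B)$ that is geometrically a quotient of $\Pro^1$ by a finite subgroup of $\mathrm{PGL}_2$, hence geometrically rational of genus $0$. Thus $X_\eta/G_F$ is a conic over $\ka(B)$, which shows the general fiber of $X/G_F \to B$ is a smooth conic. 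After passing to a relative resolution of singularities and a relative $G_B$-minimal model, we obtain the conic bundle $Y \to B$ of the statement; this uses Definition~\ref{MMPrelred}.

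The heart of the lemma is the inequality on the number of singular fibers. The key observation is that singular fibers of $\overline{Y} \to \overline{B}$ can only occur over points of $\overline B$ lying under singular fibers of $\overline\varphi$, or under points where $G_F$ has nontrivial stabilizer behavior — but since $G_F$ already acts trivially on $B$, the only source of new degeneration in the quotient comes from the fibers that were already special. More precisely, over a point $p \in \overline B$ where $\overline\varphi$ has a smooth fiber $\cong \Pro^1_{\kka}$, the group $G_F$ acts on this $\Pro^1$, and the scheme-theoretic fiber of $\overline{X}/G_F \to \overline B$ over $p$ is $\Pro^1/G_F \cong \Pro^1$, i.e. still a smooth conic; no blow-up is forced in the relative resolution there, and the relative minimal model leaves such a fiber smooth. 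Hence every singular fiber of $\overline Y \to \overline B$ lies over one of the (at most $n$) points carrying a singular fiber of $\overline\varphi$. I would argue that after the relative MMP each of these points contributes at most one singular fiber to $\overline Y$: a relatively minimal conic bundle has at most one singular fiber over each point of the base (indeed, the dual graph of a fiber in a relatively minimal conic bundle is either a point or two $(-1)$-curves meeting once), so contracting superfluous components in the relative MMP reduces whatever chain of rational curves sits over $p$ to either a smooth fiber or a single degenerate conic.

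The main obstacle I anticipate is the local analysis over a singular fiber of $\overline\varphi$: there $G_F$ (equivalently the relevant stabilizer in $G$) permutes or preserves the two components of the degenerate conic, and one must check that resolving the quotient singularities of $\overline X/G_F$ over such a point and then running the relative MMP does not produce \emph{more} than one singular fiber over $p$. This is where I would be most careful — one must control the cyclic quotient singularities of the surface $\overline X / G_F$ sitting over $p$, resolve them minimally, and then verify that the resulting (possibly long) chain of curves over $p$ contracts down to at most a single singular conic fiber; the count $K_X^2 + n = 8$ and the fact that components of singular fibers are $(-1)$-curves keep the bookkeeping finite. Once that local claim is in hand, summing over the at most $n$ special points of $\overline B$ gives that $\overline Y \to \overline B$ has at most $n$ singular fibers, completing the proof.
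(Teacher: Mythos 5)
Your generic-fibre argument and the overall reduction (resolve, run the relative $G_B$-MMP, count base points) are fine, but the heart of the lemma is the claim that over every point $p\in\overline B$ where $\XX\to\overline B$ has a \emph{smooth} fibre, the fibre of $\overline Y\to\overline B$ is again smooth, and here your justification has a genuine gap. You argue that the image of a smooth fibre $F$ is $F/G_F\cong\Pro^1_{\kka}$, "so no blow-up is forced in the relative resolution there." That inference is invalid: the quotient surface can have cyclic quotient singularities at points lying \emph{on} the image of a smooth fibre, their minimal resolution introduces exceptional curves over $p$, and after the relative MMP the fibre over $p$ may well end up singular. This is not a hypothetical failure mode --- it is exactly what happens for quotients by groups acting nontrivially on the base: in Proposition \ref{basequotient} (Table \ref{Table1}) and in Example \ref{keyexample}, orbits of \emph{smooth} fibres with stabilizer of even order produce \emph{singular} fibres of the quotient, even though each such fibre also maps to a copy of $\Pro^1_{\kka}$. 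Since your argument uses the hypothesis that $G_F$ acts trivially on $B$ only through the vague phrase "the only source of new degeneration comes from fibres that were already special," it would prove too much. The missing step, which is the paper's key point, is local: if $q\in F$ is a fixed point of a nontrivial $g\in G_F$, then $g$ acts on $T_q\XX$ with eigenvalue $1$ in the base direction (because it is trivial on $B$, hence on $T_p\overline B$) and a root of unity along the fibre, i.e.\ as a quasi-reflection; consequently the stabilizer of $q$ is generated by quasi-reflections and $\XX/G_F$ is \emph{smooth} along $F/G_F$. Only with this smoothness in hand can one say that the resolution does nothing over such $p$ and that the relative MMP leaves the fibre smooth, so that every singular fibre of $\overline Y\to\overline B$ lies over one of the at most $n$ points carrying a singular fibre of $\overline\varphi$.

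A secondary remark: the "main obstacle" you flag in your last paragraph --- that the locus over a singular fibre of $\overline\varphi$ might contribute more than one singular fibre to $\overline Y$ --- is vacuous. By Definition \ref{MMPrelred} the relative $G_B$-MMP-reduction $Y\to B$ is a conic bundle, so over each point of the base there is exactly one fibre, which is a reduced conic; the count of singular fibres is a count of base points. Hence no analysis over the singular fibres of $\overline\varphi$ is needed for this lemma (such an analysis is needed later, in Lemmas \ref{quotientoffibre} and \ref{quotientofoddfibre}, for the quotient by the group acting on the base, not here).
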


\begin{proof}
Let us consider a smooth fibre $F \cong \Pro^1_{\kka}$ of the conic bundle $\XX \rightarrow \overline{B}$. The group $G_F$ acts on $F$ and the quotient $F / G_F$ is isomorphic to $\Pro^1_{\kka}$. Each non-trivial automorphism $g \in G_F$ has exactly two fixed points on $F$, but in a neighbourhood of these points $g$ acts as a reflection because it acts trivially on $B$. Therefore there are no singular points lying on $F / G_F$ in $\XX / G_F$.

The $G$-morphism $\varphi: X \rightarrow B$ induces a $G_B$-morphism \mbox{$\psi: X / G_F \rightarrow B$}. A general fibre of $\psi$ over $\kka$ is isomorphic to $\Pro^1_{\kka}$. Thus by applying $G_B$-equivariant Minimal Model Program over $B$ to $X / G_F$ one can obtain that a relative $G_B$-MMP-reduction $Y$ of $X / G_F$ is a conic bundle. If the fibre over $p \in \overline{B}$ of the bundle $\XX \rightarrow \overline{B}$ is smooth then the fibre over $p$ of the bundle $\overline{Y} \rightarrow \overline{B}$ is also smooth. Thus the number of singular fibres of $\overline{Y} \rightarrow \overline{B}$ is less or equal to $n$.
\end{proof}

From now on we assume that $G_F$ is trivial and $G = G_B$. If $X$ is minimal then the components of each singular fibre of the conic bundle $\XX \rightarrow \overline{B}$ are permuted by some element of $G \times \Gal\left(\kka / \ka\right)$ since otherwise we can contract $G \times \Gal\left(\kka / \ka\right)$-invariant set of $(-1)$-curves lying in different fibers of the conic bundle.

\begin{lemma}
\label{permutation}
Any element $g \in G$ does not permute the components of any singular fibre.
\end{lemma}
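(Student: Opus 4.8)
The plan is to argue by contradiction using the local structure of a conic bundle near a singular fibre together with the faithfulness of the linearized action at a fixed point, exactly as in Lemma~\ref{fixedpoints}. Suppose some $g \in G = G_B$ swaps the two components $C_1$ and $C_2$ of a singular fibre $\overline{F} = C_1 \cup C_2$ of $\XX \to \overline{B}$ over a point $p \in \overline{B}$. Since $g$ acts on the base $B$, it fixes $p$ (it must, as it preserves the reducible fibre, and a singular fibre sits over a single point), and it interchanges $C_1$ and $C_2$. Consequently $g$ fixes the singular point $q = C_1 \cap C_2$ of $\overline{F}$. Now I would look at the induced action of $g$ on the tangent space $T_q\XX$, which is two-dimensional: one direction along $C_1$, one along $C_2$. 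Because $g$ swaps $C_1$ and $C_2$, the element $g$ acts on $T_q \XX$ by a linear map swapping these two coordinate lines, so $g^2$ acts as a scalar on each line while $g$ itself has order divisible by $2$ in this local representation.

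The key point is to derive a contradiction with minimality. First I would note that $g$ fixes $q$ and swaps the two branches, so some power of $g$ — in fact $g$ itself, after possibly replacing it by an odd power times the swap — acts on $\XX$ near $\overline{F}$ in a way that I can analyze. More precisely, I would consider the quotient or, better, directly contract: the two $(-1)$-curves $C_1$ and $C_2$ form a $g$-orbit (and this orbit is stable under the remaining $G$-action and under $\Gal(\kka/\ka)$ only if the whole fibre is, which it is, being a fibre of the $\ka$-morphism $\varphi$). Actually the cleanest route: by the remark preceding the lemma, the components of each singular fibre are permuted by some element of $G \times \Gal(\kka/\ka)$, and we want to rule out that a \emph{pure $G$-element} does the swapping. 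If $g\in G$ swaps $C_1,C_2$, then $C_1\cup C_2$ is a single $\langle g\rangle$-orbit; since $\varphi$ is $G$-equivariant and defined over $\ka$, the Galois group permutes the fibres and preserves this orbit structure on the fibre over $p$. Then $C_1 + C_2$ is a $G\times\Gal$-invariant effective divisor that can be contracted $G\times\Gal$-equivariantly — but wait, one contracts one $(-1)$-curve at a time or a disjoint union. Here $C_1$ and $C_2$ meet, so I would instead contract the images: contract $C_1$ first, then the image of $C_2$ becomes a $(-1)$-curve, and since $g$ swaps them the composite is $\langle g\rangle$-equivariant and the whole procedure is $G\times\Gal$-equivariant, lowering $\rho$. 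This contradicts relative $G$-minimality ($\operatorname{rk}\Pic(X)^G = 2$), or rather the absolute $G$-minimality of $X$ in the running setup.

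The main obstacle I anticipate is making the contraction argument genuinely $G$-equivariant and $\Gal$-equivariant simultaneously: one cannot contract a single component $C_1$ $G$-equivariantly (its $G$-orbit also contains $C_2$, which it meets), so the naive "contract the orbit" move fails because the orbit is not a disjoint union of $(-1)$-curves. The fix is to observe that $g$ fixes the point $q$ and acts on $\XX$ with $q$ as an isolated fixed point having eigenvalues that swap the two curve directions — so in suitable local coordinates $g:(x,y)\mapsto(\zeta y,\zeta' x)$ for roots of unity $\zeta,\zeta'$. Then one checks that the minimal resolution of $\XX/\langle g\rangle$ near $q$, or a direct blow-up/blow-down, produces a $G$-equivariant birational contraction — alternatively, and more in the spirit of this paper, one shows directly that $C_1$ and $C_2$ cannot be swapped by comparing with Lemma~\ref{fixedpoints}: the stabilizer in $G$ of the fibre acts on $T_q$, and an element swapping the branches together with the elements fixing each branch would generate a non-cyclic group all of whose elements fix $q$, contradicting Lemma~\ref{fixedpoints} unless that stabilizer is cyclic, in which case the swapping element $g$ generates it and one still contracts. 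I would organize the final writeup around this tangent-space/Lemma~\ref{fixedpoints} dichotomy, as it dovetails with the tools already set up in Section~3, and handle the equivariant contraction as the technical heart.
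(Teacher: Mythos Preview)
Your proposal does not reach a proof; both concrete routes you sketch fail as written. In the contraction route, after blowing down $C_1$ the proper transform of $C_2$ has self-intersection $-1 + (C_1\cdot C_2)^2 = 0$: it becomes the smooth fibre over $p$, and there is nothing left to contract. More fundamentally, you are aiming for a contradiction with relative $G$-minimality, but that is the wrong target: the statement holds for any conic bundle once $G$ acts faithfully on $B$, and neither the paper's argument nor any correct one uses $G$-minimality of $X$ here. In the second route, Lemma~\ref{fixedpoints} is about $\mathrm{PGL}_2$ acting on $\Pro^1$; invoking it for the two-dimensional tangent space $T_q\XX$ is a category error, since non-cyclic finite groups can perfectly well fix a smooth surface point with a faithful linearisation.

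Your local setup \emph{can} be completed, but the key computation is missing. Let $k=\ord g$ (even, since $g$ swaps $C_1,C_2$) and write $dg_q=\left(\begin{smallmatrix}0&\alpha\\\beta&0\end{smallmatrix}\right)$ in coordinates with $C_i=\{x_i=0\}$, so $(dg_q)^2=\alpha\beta\cdot\mathrm{Id}$. Locally $\varphi^*(t)=x_1x_2\cdot(\text{unit})$; comparing $g^*\varphi^*(t)$ with $\varphi^*g_B^*(t)=\zeta\,\varphi^*(t)$ at second order gives $\alpha\beta=\zeta$, where $\zeta$ is the eigenvalue of $g$ on $T_p\overline{B}$. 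Because $G=G_B$ acts \emph{faithfully} on $B$, the element $\zeta$ is a primitive $k$-th root of unity (this is exactly the principle behind Lemma~\ref{fixedpoints}). Hence $(dg_q)^k=\zeta^{k/2}\cdot\mathrm{Id}=-\mathrm{Id}\neq\mathrm{Id}$, contradicting faithfulness of $\langle g\rangle$ on $T_q\XX$. The hypothesis you never used --- that $G_F$ is trivial --- is precisely what forces the contradiction.

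The paper's proof is entirely different and global. It first observes that away from the two $g$-fixed points of $\overline{B}$ no power of $g$ can swap components of a singular fibre (else that power would fix three base points), so running $\langle g\rangle$-MMP on $\XX$ contracts all such fibres and yields a relatively $\langle g\rangle$-minimal conic bundle $\overline{Y}$ with at most two singular fibres. Theorem~\ref{MinCB} then forces $K_{\overline{Y}}^2=6$, so $\overline{Y}$ is the sextic del Pezzo surface; the contradiction is obtained by writing down the $g$-action on the hexagon of $(-1)$-curves and comparing the resulting fixed-point count for the involution $g^{k/2}$ with the Lefschetz number.
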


\begin{proof}
Assume that there exists an element $g \in G$ permuting the components of a singular fibre $F$. Obviously, $\ord g$ is even. The element $g$ faithfully acts on the base $B \cong \Pro^1_{\ka}$ therefore there are exactly two fixed points $p_1$ and $p_2$ on $\overline{B}$. If there exists a singular fibre $F$ over any other point $p$ and an integer $k$ such that $g^k$ permutes components of this fibre then $g^k$ has at least three fixed points $p$, $p_1$ and $p_2$ on the base. Thus the action of $g^k$ is trivial on the base and non-trivial on $X$. This contradicts the assumption that $G_F$ is trivial.

Apply $\langle g \rangle$-minimal model program to the surface $\XX$. The obtained surface $\overline{Y}$ has $1$ or $2$ singular fibres over points $p_1$ and $p_2$. Thus $K_{\overline{Y}}^2$ equals $6$ or $7$. By Theorem \ref{MinCB} (iv) the surface $\overline{Y}$ is a del Pezzo surface of degree $6$.

Denote six $(-1)$-curves on $\overline{Y}$ by $E_1$, $E_2$, $\ldots$, $E_6$, where $E_i^2 = -1$ and $E_i \cdot E_j = 1$ if $i = j \pm 1 \pmod 6$, otherwise $E_i \cdot E_j = 0$. The element $g$ acts on these curves as follows:
$$
gE_1 = E_2, gE_2 = E_1, gE_3 = E_6, gE_4 = E_5, gE_5 = E_4, gE_6 = E_3.
$$ 
The curves $E_3$ and $E_6$ are sections of the conic bundle $\overline{Y} \rightarrow \overline{B}$, while $E_1 \cup E_2$ and $E_4 \cup E_5$ are singular fibres of this bundle.

Consider the element $h = g^{\frac{\ord g}{2}}$. Note that $h$ acts faithfully on the base $B$ therefore all $h$-fixed points on $\overline{Y}$ lie on the singular fibres of the conic bundle $\overline{Y} \rightarrow \overline{B}$. The curves $E_i$ are not $h$-fixed since $\overline{Y}$ is relatively $\langle g \rangle$-minimal. Thus by the Lefschetz fixed-points formula the element $h$ has exactly four fixed points on $\overline{Y}$. But either all curves $E_i$ are $h$-invariant or $hE_1 = E_2$ and $hE_4 = E_5$. In the former case $h$ has six fixed points, and in the latter case $h$ has two fixed points. This contradiction finishes the proof.
\end{proof}

One has 
$$
G = G_B \subset \Aut\left( B \right) \cong \mathrm{PGL}_2 \left( \ka \right) \subset \mathrm{PGL}_2 \left( \kka \right).
$$
\noindent Therefore $G$ is one of the following groups: cyclic group $\CG_k$, dihedral group $\DG_{2k}$ (including \mbox{$\DG_4 = \CG_2^2$}), alternating group $\AG_4$, symmetric group $\SG_4$ or alternating group $\AG_5$ by Theorem \ref{PGL2class}.

We use the following well-known lemma whose proof is a direct computation.

\begin{lemma}
\label{blowup}
Let $p$ be a smooth point of a surface $S$ and $g$ be an element of $\Aut(S)$ that fixes the point $p$. Let $g$ act on $T_p S$ as $\operatorname{diag}\left(\lambda, \mu \right)$ and $\pi: \widetilde{S} \rightarrow S$ be a blowup of $S$ at the point $p$. Then $g$ has exactly two fixed points $p_1$ and $p_2$ on the exceptional divisor of $\pi$ and the actions on $T_{p_1}\widetilde{S}$ and $T_{p_2}\widetilde{S}$ have the form $\operatorname{diag}\left(\frac{\lambda}{\mu}, \mu \right)$ and $\operatorname{diag}\left(\lambda, \frac{\mu}{\lambda} \right)$ respectively.
\end{lemma}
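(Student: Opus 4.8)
The plan is to reduce everything to an explicit computation in the two standard charts of the blowup, using only the first-order data of $g$ at $p$. First I would pick local coordinates $x, y$ on $S$ centred at $p$ in which the differential $d_p g$ equals $\operatorname{diag}(\lambda, \mu)$, so that $g^{*}x = \lambda x + (\text{higher order})$ and $g^{*}y = \mu y + (\text{higher order})$; here $\lambda, \mu \ne 0$ because $g$ is an automorphism, and we may assume $\lambda \ne \mu$, since otherwise $g$ fixes the exceptional divisor pointwise and the statement is vacuous. Recall that $\pi \colon \widetilde{S} \to S$ is covered by two affine charts: one with coordinates $(x, t)$ and $y = xt$, the other with coordinates $(u, y)$ and $x = uy$, glued by $t = u^{-1}$; in the first chart the exceptional divisor $E$ is $\{x = 0\}$ with coordinate $t$ along it, in the second it is $\{y = 0\}$ with coordinate $u$, and $E \cong \Pro(T_p S)$.

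Next I would describe the lift. As $\pi$ is canonical it is $g$-equivariant, so $g$ lifts to $\widetilde{S}$, and on $E \cong \Pro(T_p S)$ it acts as $\Pro(d_p g) \colon [x : y] \mapsto [\lambda x : \mu y]$. Since $\lambda \ne \mu$, this has exactly the two fixed points $p_1 = [0 : 1]$, which is the point $(u, y) = (0, 0)$ in the second chart, and $p_2 = [1 : 0]$, which is $(x, t) = (0, 0)$ in the first chart; these are the only fixed points of $g$ on $E$.

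Finally I would read off the differentials at $p_1$ and $p_2$. In the first chart, substituting $y = xt$ into the expansions of $g^{*}x$ and $g^{*}y$ gives $g^{*}x = \lambda x + (\text{higher order})$ and $g^{*}t = g^{*}(y/x) = \tfrac{\mu}{\lambda}\, t + (\text{higher order})$, so the matrix of $d_{p_2} g$ in the basis $dx, dt$ is upper triangular with diagonal entries $\lambda$ and $\mu/\lambda$, hence $d_{p_2} g$ is conjugate to $\operatorname{diag}(\lambda, \mu/\lambda)$; symmetrically, in the second chart with $x = uy$ one gets $g^{*}y = \mu y + (\text{higher order})$ and $g^{*}u = \tfrac{\lambda}{\mu}\, u + (\text{higher order})$, so $d_{p_1} g$ is conjugate to $\operatorname{diag}(\mu, \lambda/\mu)$, i.e. to $\operatorname{diag}(\lambda/\mu, \mu)$ after reordering. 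This is exactly the asserted form. There is essentially no obstacle: the only subtlety worth flagging is that the higher-order part of $g$ can contribute an off-diagonal entry to the Jacobian at $p_1$ and $p_2$, so "\,has the form $\operatorname{diag}(\cdots)$\," should be understood up to conjugacy, i.e. as a statement about eigenvalues; if one wants honest diagonal matrices one may first linearize $g$ near $p$ — legitimate whenever $g$ has finite order, which is the only situation used in the paper — after which the charts give $g^{*}t = \tfrac{\mu}{\lambda}\, t$ and $g^{*}u = \tfrac{\lambda}{\mu}\, u$ exactly.
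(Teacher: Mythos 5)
Your chart computation is correct: the lift acts on the exceptional divisor as the projectivization of $\operatorname{diag}(\lambda,\mu)$, and reading off $g^{*}t=\tfrac{\mu}{\lambda}t+\cdots$ and $g^{*}u=\tfrac{\lambda}{\mu}u+\cdots$ in the two standard charts gives exactly the stated tangent actions; your remarks on $\lambda\ne\mu$ and on interpreting the conclusion up to a choice of basis (or after linearizing a finite-order $g$) are also appropriate. The paper offers no written proof, calling the lemma a direct computation, and yours is precisely that standard computation, so nothing further is needed.
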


\begin{lemma}
\label{eveninvariantfibre}
Let $g \in G$ be an element of even order. Then $g$-invariant fibres of the conic bundle $\XX \rightarrow \overline{B}$ are smooth.
\end{lemma}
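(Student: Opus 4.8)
The plan is to argue by contradiction. Suppose $g \in G$ has even order $\ord g = 2m$ and that the fibre over some point $p \in \overline{B}$ is $g$-invariant and singular, say $F = F_1 + F_2$ with node $q = F_1 \cap F_2$. Since $g$ fixes $p$ and, by Lemma~\ref{permutation}, does not interchange the two components of $F$, the element $g$, and hence the involution $h := g^m$, fixes each of $F_1$, $F_2$ and the point $q$.

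The heart of the matter is a local computation at $q$. Near $q$ the conic bundle has the standard model $\varphi^{\ast} t = uv$, where $t$ is a local parameter on $\overline{B}$ at $p$, $F_1 = \{v = 0\}$ and $F_2 = \{u = 0\}$. Linearising the involution $h$ at $q$ we may assume $h(u,v) = (au, bv)$ with $a^2 = b^2 = 1$. On the one hand $h^{\ast}(uv) = ab\,uv$; on the other hand $h^{\ast}(uv) = h^{\ast}\varphi^{\ast}t = \varphi^{\ast}h_B^{\ast}t = \lambda\,uv + \cdots$, where $h_B$ is the automorphism of $\overline{B}$ induced by $h$ and $\lambda$ is the scalar by which $h$ acts on $T_p\overline{B}$. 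Hence $ab = \lambda$. Since $g$ acts faithfully on $B$, the involution $h_B$ is nontrivial, so $\lambda = -1$ and $\{a,b\} = \{1,-1\}$; say $a = 1$. Then $h$ fixes the curve $F_1$ pointwise, whereas $h|_{F_2}$, having derivative $-1$ at $q$, is a nontrivial involution of $\Pro^1$.

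It remains to contradict relative $G$-minimality. Put $\Gamma = \Gal(\kka/\ka)$ and let $\mathcal{O}$ be the (finite) orbit of $F_1$ under the group generated by $G$ and $\Gamma$; each curve of $\mathcal{O}$ is a component of a singular fibre of $\XX \to \overline{B}$. If the curves of $\mathcal{O}$ are pairwise disjoint, their contraction is a nontrivial birational $G$-morphism over $B$ defined over $\ka$, contradicting relative $G$-minimality (Definition~\ref{relmin}). Otherwise two of them meet, hence are the two components of one singular fibre, and a short orbit chase gives $\sigma \in \langle G, \Gamma \rangle$ with $\sigma(F_1) = F_2$; by Lemma~\ref{permutation} this $\sigma$ acts nontrivially on $\kka$, and since $h$ is defined over $\ka$ the element $h' := \sigma h \sigma^{-1}$ again lies in $G$. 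Now $h'$ fixes $\sigma(F_1) = F_2$ pointwise, so by the same local computation at $q$ it is a nontrivial involution acting on $T_p\overline{B}$ by $-1$. Therefore $hh' \in G$ fixes $p$ and acts trivially on $T_p\overline{B}$, hence trivially on $B$; since $G_F$ is trivial this forces $h' = h$, and then $h$ fixes $F_2$ pointwise, contradicting the fact that $h|_{F_2}$ is a nontrivial involution. I expect this last step to be the main obstacle: one must carry the combined action of $G$ and the Galois group carefully, and in particular see that the orbit of $F_1$ can fail to consist of pairwise disjoint curves only through an element interchanging the two components of a singular fibre, which by Lemma~\ref{permutation} must act nontrivially on $\kka$.
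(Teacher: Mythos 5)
Your proof is correct, but it follows a genuinely different route from the paper's. The paper keeps the whole cyclic group $\langle g\rangle$ in play: it contracts one component equivariantly, uses Lemma~\ref{blowup} to compute the $g$-eigenvalues at the two outer fixed points $q_1\in E_1$, $q_2\in E_2$ (namely $\xi_k^a$ and $\xi_k^{-a-1}$), and then uses the component-swapping element of $G\times\Gal(\kka/\ka)$ forced by relative minimality (together with Lemma~\ref{fixedpoints}) to conclude that the local cyclic groups $\langle\operatorname{diag}(\xi_k,\xi_k^a)\rangle$ and $\langle\operatorname{diag}(\xi_k,\xi_k^{-a-1})\rangle$ coincide, i.e. $2a\equiv -1\pmod k$, impossible for even $k$. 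You instead pass to the involution $h=g^{\ord g/2}$, work at the node where the identity $ab=\lambda=-1$ (product of branch eigenvalues equals the base derivative) shows $h$ fixes one component pointwise, and then conjugate by the swapping element $\sigma$ to get $h'=g'hg'^{-1}\in G$ fixing the other component pointwise; the derivative argument at $p$ plus faithfulness of the $G$-action on $B$ forces $h'=h$, a contradiction. Both arguments consume the same minimality input (existence of a swapping element in $G\times\Gal(\kka/\ka)$, which you rederive via the orbit of $F_1$ rather than citing the remark before Lemma~\ref{permutation}), but yours avoids Lemma~\ref{blowup} and the eigenvalue bookkeeping, so it is shorter and more transparent; the paper's computation, on the other hand, yields the relation $k=2a+1$ for odd-order stabilizers, which is explicitly reused in the proof of Lemma~\ref{quotientofoddfibre}, so the extra work there is not wasted. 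Two cosmetic points: the linearization step is cleaner if you only compare lowest-order terms (the branch eigenvalues $a,b$ of $dh_q$ satisfy $ab=\lambda$ because the quadratic leading term of $\varphi^*t$ is a multiple of $uv$; after a coordinate change linearizing $h$ one only has $\varphi^*t=uv\cdot(\text{unit})$), and the remark that $\sigma$ has nontrivial Galois part is not actually needed, since $\sigma h\sigma^{-1}=g'hg'^{-1}\in G$ holds in any case.
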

\begin{proof}
Let $p$ be a $g$-fixed point on $\overline{B}$ and $F$ be a fibre over this point. We can assume that in the neighbourhood of $p$ the element $g$ acts on $\overline{B}$ as a multiplication by $\xi_k$.

Let $F$ be a singular fibre so that $F = E_1 \cup E_2$. These curves $E_1$ and $E_2$ are $g$-invariant by Lemma \ref{permutation}. Let $q_1 \in E_1$ and $q_2 \in E_2$ be $g$-fixed points different from the point $E_1 \cap E_2$.

In the neighbourhood of $q_1$ the element $g$ acts on $E_1$ as a multiplication by~$\xi_k^a$ for some $a$. If $f: \XX \rightarrow \overline{S}$ is $g$-equivariant contraction of $E_2$ then the element $g$ acts on $T_{f(E_2)} \overline{S}$ as $\operatorname{diag} \left( \xi_k, \xi_k^{-a} \right)$. Thus in the neighbourhood of $q_2$ the element $g$ acts on $E_2$ as a multiplication by $\xi_k^{-a-1}$ by Lemma \ref{blowup}. The point $p$ is fixed only by elements of cyclic groups containing $g$ by Lemma \ref{fixedpoints}. Therefore if an element $t$ of $G \times \Gal(\kka / \ka)$ permutes $E_1$ and $E_2$ then $tq_1 = q_2$, $tq_2 = q_1$ and the groups $G_{q_1} = \langle \operatorname{diag}\left( \xi_k, \xi_k^a \right) \rangle$ and $G_{q_2} = \langle \operatorname{diag}\left( \xi_k, \xi_k^{-a-1} \right) \rangle$ coincide in $\mathrm{GL}_2(\kka)$. It holds if and only if $k$ is odd and $a = \frac{k-1}{2}$. The latter equality contradicts assumption that $k$ is even.
\end{proof}

\begin{lemma}
\label{quotientoffibre}
Let $Y$ be a relative MMP-reduction of the quotient $X / G$ and $f: X \dashrightarrow Y$ be the corresponding rational map. Let $g \in G$ be an element of odd order such that the group $\langle g\rangle$ is not contained in a bigger cyclic subgroup of $G$. Then for any $g$-invariant smooth fibre $F$ of $\XX \rightarrow \overline B$ the fibre $f(F)$ of $\overline{Y} \rightarrow \overline{B} / G$ is smooth.
\end{lemma}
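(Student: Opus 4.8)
The plan is to work in a neighbourhood of $F$ over $\kka$, where the quotient $\XX/G$ is isomorphic to (a neighbourhood of $F$)$/\langle g\rangle$. Let $p\in\overline B$ be the point with $F=\varphi^{-1}(p)$. By Lemma~\ref{fixedpoints} the stabilizer of $p$ in $G$ is cyclic, and since $\langle g\rangle$ is maximal cyclic this stabilizer equals $\langle g\rangle$; in particular the only points of $F$ with non-trivial $G$-stabilizer are the fixed points of $g|_F$, so $\XX/\langle g\rangle$ has at most two singular points over the image $\overline p\in\overline B/G$ of $p$.

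If $g|_F=\mathrm{id}$, then near every point of $F$ the element $g$ fixes the curve $F$ and acts on the normal direction by a primitive $(\ord g)$-th root of unity (the eigenvalue of $g$ on $T_p\overline B$), so $g$ acts as a reflection, $\XX/\langle g\rangle$ is smooth along the image of $F$, the induced fibration is a $\Pro^1$-bundle near $\overline p$, and $f(F)$ is smooth. So assume $m:=\ord(g|_F)>1$; as $m$ divides the odd number $\ord g$ we have $m\geqslant 3$. Let $q_1,q_2$ be the two fixed points of $g|_F$ on $F$ and trivialise $\varphi$ over a neighbourhood $U$ of $p$; then $g$ acts on $T_{q_j}\XX$ as $\operatorname{diag}(\zeta,\nu_j)$, where $\zeta$ is the eigenvalue of $g$ on $T_p\overline B$ and $\nu_j$ is the eigenvalue of $g$ on $T_{q_j}F$, with $\nu_1=\nu$ a primitive $m$-th root of unity and $\nu_2=\nu^{-1}$. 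Hence $\XX/\langle g\rangle$ has exactly two cyclic quotient singularities over $\overline p$, at the images $\overline q_1,\overline q_2$ of $q_1,q_2$, of types $\tfrac1{\ord g}(1,c)$ and $\tfrac1{\ord g}(1,-c)$ for a suitable $c$. Using Lemma~\ref{blowup} to compute their minimal resolution, one gets two Hirzebruch--Jung chains of rational curves, attached transversally at one end each to the proper transform $\widetilde C\cong\Pro^1$ of the reduced fibre, so the fibre of the minimal resolution over $\overline p$ is a chain of rational curves. Running the relative MMP over $\overline B/G$: since $\widetilde C$ occurs in this fibre with multiplicity $\ord g\geqslant 3$ it must be contracted, and a direct computation shows the fibre over $\overline p$ contracts either to a single smooth rational curve, in which case $f(F)$ is smooth, or to a transversal union $\ell_1\cup\ell_2$ of two $(-1)$-curves with $\ell_j$ dominating $\overline q_j$.

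It remains to exclude the second case. Because $Y$ is a relative MMP-reduction over $\ka$, in that case $\ell_1$ and $\ell_2$ cannot both be defined over the residue field $\ka(\overline p)$ (otherwise one of them could still be contracted), so there is $\sigma\in\Gal(\kka/\ka)$ fixing $\overline p$ with $\sigma(\ell_1)=\ell_2$; since $\ell_j$ is the only exceptional curve over $\overline q_j$ this gives $\sigma(\overline q_1)=\overline q_2$, hence $\sigma(q_1)\in G\cdot q_2$. The main obstacle, and the only point where the hypothesis that $\ord g$ is odd enters, is to derive a contradiction from this. Write $\sigma(q_1)=h(q_2)$ with $h\in G$. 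Since $\sigma$ fixes $\overline p$ we have $\sigma(p)\in G\cdot p$, and comparing the fibres of $\varphi$ through $\sigma(q_1)$ and through $h(q_2)$ gives $\sigma(p)=h(p)$. The point $\sigma(q_1)$ is fixed by $\sigma g\sigma^{-1}=g$ (recall $g$ is defined over $\ka$) and by $hgh^{-1}$, so $g$ preserves the fibre $F_{\sigma(p)}$, whence $\sigma(p)\in\{p,p'\}$ with $p'$ the second fixed point of $g$ on $\overline B$; by Lemma~\ref{fixedpoints} together with maximality of $\langle g\rangle$ (which also gives that the stabilizers of $p$ and of $p'$ in $G$ both equal $\langle g\rangle$) we get $hgh^{-1}=g^{\varepsilon}$, with $\varepsilon=+1$ if $\sigma(p)=p$ and $\varepsilon=-1$ if $\sigma(p)=p'$, and $h$ permutes $\{q_1,q_2\}$.

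Finally, $\sigma$ fixes the automorphism $g$, hence carries the eigenvalue of $g$ on each $g$-invariant tangent line to its eigenvalue on the image line. Writing the action of $\sigma$ on roots of unity as $\zeta\mapsto\zeta^t$: the tangent lines to $\overline B$ at $p$ and $\sigma(p)$ force $t\equiv\varepsilon\pmod{\ord g}$, while the tangent lines to the fibre at $q_1$ and $\sigma(q_1)=h(q_2)$, together with the fact that $h$ conjugates $g$ to $g^{\varepsilon}$ and permutes $q_1,q_2$, force $t\equiv-\varepsilon\pmod m$. Since $m\mid\ord g$, reducing the first congruence modulo $m$ gives $\varepsilon\equiv-\varepsilon\pmod m$, i.e.\ $m\mid 2$, contradicting $m\geqslant 3$. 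Therefore the fibre of $\overline Y$ over $\overline p$ is smooth, which is the assertion of the lemma.
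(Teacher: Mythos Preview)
Your proof is correct and reaches the same conclusion as the paper, but by a genuinely different route. The paper's argument is purely combinatorial: the resolved fibre over $\bar p$ is a chain consisting of the Hirzebruch--Jung strings for $k/a$ and $k/(k-a)$ joined by the proper transform $\tilde C$ of $\pi(F)$, and any Galois element fixing $\bar p$ must fix $\tilde C$ (since $\pi(F)$ is the set-theoretic fibre), hence acts on the chain either trivially or as the reflection about $\tilde C$. The reflection exists only if the two HJ strings coincide, i.e.\ $k/a=k/(k-a)$, forcing $k=2a$ and $k$ even. Your argument instead pushes the Galois symmetry back up to $X$: from $\sigma(\bar q_1)=\bar q_2$ you extract $h\in G$ with $\sigma(q_1)=h(q_2)$ and $hgh^{-1}=g^{\pm1}$, and then compare the two descriptions of the eigenvalue of $g$ on $T_{\sigma(q_1)}\sigma(F)$ to obtain $m\mid 2$. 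The paper's route is shorter; yours avoids continued fractions entirely and makes the role of the Galois action on roots of unity explicit.

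A few small imprecisions are worth noting. The multiplicity of $\tilde C$ in the fibre over $\bar p$ is $m=\ord(g|_F)$, not $\ord g$ (the element $g^{m}$ acts by pseudo-reflections along $F$, so the local base coordinate on $B/G$ vanishes to order $m$, not $k$, along $\tilde C$); since you already have $m\geqslant 3$, the conclusion that $\tilde C$ must be contracted is unaffected. The assertion that the MMP output over $\bar p$ is either smooth or $\ell_1\cup\ell_2$ with $\ell_j$ over $\bar q_j$ is true but not quite ``a direct computation'': one needs that a relatively minimal fibre with $\geqslant 3$ components cannot contain two adjacent $(-1)$-curves (contracting one would make the other a $0$-curve in a reducible fibre), so any $\Gamma$-orbit of $(-1)$-curves in such a fibre is disjoint and hence contractible. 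A cleaner way to get what you actually use, namely $\sigma(\bar q_1)=\bar q_2$, is to observe directly that $\sigma$ fixes $\tilde C$, so a nontrivial action of $\sigma$ on the chain swaps the two HJ strings and hence the two singular points. Finally, the phrase ``$h$ permutes $\{q_1,q_2\}$'' is inaccurate in the case $\sigma(p)=p'$, since then $h$ carries $F$ to the fibre over $p'$; but this is never used, and your eigenvalue comparison goes through verbatim.
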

\begin{proof}
Let $p$ be a $g$-fixed point on $\overline{B}$ and $F$ be a fibre over this point. We can assume that in the neighbourhood of $p$ the element $g$ acts on $\overline{B}$ as a multiplication by $\xi_k$.

Let $F$ be a smooth fibre. If $g$ acts trivially on $F$ then $f(F)$ is a smooth fibre. Otherwise $g$ has two fixed points $q_1$ and $q_2$ on $F$. In the neighbourhood of $q_1$ the element $g$ acts on $F$ as a multiplication by $\xi_k^a$ for some $a$ and in the neighbourhood of $q_2$ the element $g$ acts on $F$ as a multiplication by $\xi_k^{-a}$.

Let us consider the quotient map $\pi: X \rightarrow X / G$. There are two singular points $\pi(q_1)$ and $\pi(q_2)$ on $\pi(F)$. These singularities are toric thus for their minimal resolutions their preimages are chains of rational curves. The selfintersection numbers of these curves $-s_1$, $\ldots$, $-s_i$ and $-r_1$, $\ldots$, $-r_j$ are determined by continued fractions
$$
\frac{k}{a} = s_1 - \frac{1}{s_2 - \frac{1}{\ddots - \frac{1}{s_i}}}, \qquad \frac{k}{k-a} = r_1 - \frac{1}{r_2 - \frac{1}{\ddots - \frac{1}{r_j}}}.
$$
Let us consider a minimal resolution
$$
\mu: \widetilde{X / G} \rightarrow X / G.
$$
\noindent The preimage $\mu^{-1} \left( \pi(F) \right)$ is a chain of rational curves consisting of $\mu^{-1} \left( \pi(q_1) \right)$, $\mu^{-1} \left( \pi(q_2) \right)$ and the proper transform $\mu^{-1}_*\pi(F)$. The dual graph $\Xi$ of $\mu^{-1}\left(\pi(F)\right)$ admits at most one nontrivial automorphism which switches the ends of this chain.

If there is no non-trivial automorphisms of the graph $\Xi$ then the Galois group $\Gal \left( \kka / \ka \right)$ cannot permute irreducible components of $\mu^{-1} \left( \pi(F) \right)$ and applying the relative minimal model program over $\ka$ one can obtain that $f(F)$ is a smooth fibre since for singular fibre there are not any automorphisms permutting its components. If there is a non-trivial automorphism then
$$
i = j, \quad s_1 = r_1, \quad \ldots, \quad s_i = r_j.
$$
\noindent Therefore
$$
\frac{k}{a} = \frac{k}{k-a}, \quad \text{so that} \quad a = \frac{k}{2}
$$
\noindent and the number $k$ is even. It contradicts assumption that $k$ is odd.
\end{proof}

\begin{lemma}
\label{quotientofoddfibre}
Let $\pi: X \rightarrow X / G$ be the quotient morphism, $Y$ be a relative MMP-reduction of the quotient $X / G$ and $f: X \dashrightarrow Y$ be the corresponding rational map. Let $g \in G$ be an element of odd order such that the group $\langle g\rangle$ is not contained in a bigger cyclic subgroup of $G$. Consider a $g$-invariant fibre $F$ of $\XX \rightarrow \overline{B}$. If $F$ is singular then $f(F)$ is singular fibre.
\end{lemma}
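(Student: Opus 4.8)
The plan is to mimic the analysis of Lemma~\ref{quotientoffibre}, but now for a singular fibre $F = E_1 \cup E_2$, and to show that the local structure of the quotient singularities on $\pi(F)$ is such that the chain of curves in the minimal resolution $\mu^{-1}(\pi(F))$ does \emph{not} admit the end-swapping automorphism of its dual graph, so that the components survive after running the relative minimal model program over $\ka$. First I would set up local coordinates exactly as before: let $p$ be a $g$-fixed point on $\overline{B}$, let $F = E_1 \cup E_2$ be the singular fibre over $p$, and assume $g$ acts near $p$ on $\overline{B}$ by multiplication by $\xi_k$ where $k = \ord g$ is odd. By Lemma~\ref{permutation} the element $g$ does not permute $E_1$ and $E_2$, so both components are $g$-invariant. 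Let $q_0 = E_1 \cap E_2$, and let $q_1 \in E_1$, $q_2 \in E_2$ be the other $g$-fixed points. At $q_0$ the element $g$ acts on $T_{q_0}\XX$ with eigenvalues along $E_1$ and $E_2$; say $g$ acts on $E_1$ near $q_1$ by $\xi_k^a$ and on $E_2$ near $q_2$ by $\xi_k^b$. Using Lemma~\ref{blowup} applied to the $g$-equivariant contraction of $E_2$ (as in Lemma~\ref{eveninvariantfibre}) one gets the relation $b \equiv -a-1 \pmod k$; the third fixed point $q_0$ then has tangent weights $(\xi_k^{a+1}, \xi_k^{-a})$ or something of that shape, which I would pin down precisely.

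Next I would describe the quotient $\pi(F) \subset X/G$. The three points $q_0, q_1, q_2$ map to (at most three) cyclic quotient singularities on $\pi(F)$, and $\pi(F)$ is the union of the images $\pi(E_1)$ and $\pi(E_2)$, still meeting at $\pi(q_0)$. Taking the minimal resolution $\mu: \widetilde{X/G} \to X/G$, the preimage $\mu^{-1}(\pi(F))$ is a chain of smooth rational curves: the proper transforms of $\pi(E_1)$ and $\pi(E_2)$ together with the exceptional chains over $\pi(q_0)$, $\pi(q_1)$, $\pi(q_2)$, all strung together in a line because $\pi(E_1)$ and $\pi(E_2)$ meet transversally at the single point $\pi(q_0)$. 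The self-intersection sequence of this chain is governed by continued fractions of $\frac{k}{a}$, $\frac{k}{-a-1}$ (at $q_0$) and $\frac{k}{b}$-type expressions, exactly as in Lemma~\ref{quotientoffibre}. The dual graph $\Xi$ is again a chain and admits at most one nontrivial automorphism, the one reversing the chain. If $\Gal(\kka/\ka)$ cannot realize this reversal, then it cannot permute components of $\mu^{-1}(\pi(F))$, and running the relative MMP over $\ka$ we contract only Galois-stable sets of $(-1)$-curves; since $\pi(E_1)$ and $\pi(E_2)$ are \emph{not} interchanged, the proper transforms of both survive (possibly after contracting interior exceptional curves), and $f(F)$ remains reducible, i.e.\ singular.

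So the crux is to rule out the chain-reversing symmetry of $\Xi$. This is where oddness of $k$ is used. A chain-reversing automorphism would force the whole self-intersection sequence to be palindromic; by the usual correspondence between chains and continued fractions this translates into the two ``outer'' fractions being equal and the middle contribution at $\pi(q_0)$ being itself palindromic. Matching the outer fractions gives, as in Lemma~\ref{quotientoffibre}, a relation of the form $\frac{k}{a} = \frac{k}{k-a'}$ for the relevant residues, forcing $2a \equiv 0 \pmod k$, hence $a \equiv 0$ or $a \equiv k/2$; the first is excluded because $g$ acts nontrivially on $E_1$ (it has isolated fixed points $q_1 \ne q_0$), and the second forces $k$ even, contradicting the hypothesis. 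One also has to check the palindromicity condition at the central singularity $\pi(q_0)$ with weights built from $a$ and $-a-1$; this again yields an equation solvable only for even $k$. I expect this last point — carefully bookkeeping the tangent weights at $q_0$ and translating ``the resolution chain is palindromic'' into a clean congruence mod $k$ — to be the main technical obstacle, but it is a finite computation of the same flavour as Lemma~\ref{blowup} and Lemma~\ref{quotientoffibre}, and the parity obstruction should come out cleanly.
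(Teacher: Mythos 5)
Your strategy runs in the wrong direction. In a relative MMP-reduction $Y$ (which by Definition~\ref{MMPrelred} is relatively minimal over $B/G$), a fibre can only stay singular if its two components are interchanged by $\Gal\left(\kka/\ka\right)$; if every component of $\mu^{-1}\left(\pi(F)\right)$ is Galois-stable, the relative MMP over $\ka$ is free to contract components one at a time and terminates with an \emph{irreducible}, i.e.\ smooth, fibre --- this is precisely how Lemma~\ref{quotientoffibre} concludes smoothness. So your plan (``rule out the chain-reversing symmetry, hence Galois does not interchange $\pi(E_1)$ and $\pi(E_2)$, hence $f(F)$ stays reducible'') would, if carried out, prove that $f(F)$ is smooth, the opposite of the statement. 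Moreover the symmetry you try to exclude is actually present, and for a reason you do not use: since $X\to B$ is relatively $G$-minimal, some element of $G\times\Gamma$ must swap $E_1$ and $E_2$, and by Lemma~\ref{permutation} no element of $G$ alone does; as in the proof of Lemma~\ref{eveninvariantfibre} (via Lemma~\ref{fixedpoints}) this forces the stabilizer weights at $q_1$ and $q_2$ to coincide, i.e.\ $-a-1\equiv a \pmod k$, which is solvable exactly because $k$ is odd and gives $a=\frac{k-1}{2}$. Your congruence ``palindromy forces $2a\equiv 0$, hence $k$ even'' is the computation for the smooth-fibre case of Lemma~\ref{quotientoffibre}, where the two weights are $a$ and $-a$; here they are $a$ and $-a-1$, so the condition is $2a+1\equiv 0\pmod k$, automatic for odd $k$.

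The paper's proof goes the other way: it first notes that relative $G$-minimality makes $\Gamma$ permute the components of $\pi(F)$ (this also excludes the case, which you do not treat, that $g$ acts trivially on one component of $F$, since then only one component of $\pi(F)$ would carry a singular point and no Galois element could swap them). Then, with $k=2a+1$, the element $g$ acts at $q_1$ and $q_2$ with the same weights and at $E_1\cap E_2$ as the scalar $\operatorname{diag}\left(\xi_k^{-a},\xi_k^{-a}\right)$, so $\mu^{-1}\left(\pi(F)\right)$ is the symmetric chain $-3,-2,\ldots,-2,-1,-(2a+1),-1,-2,\ldots,-2,-3$ whose two halves \emph{are} exchanged by $\Gamma$. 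Running the relative MMP over $\ka$ one contracts Galois-conjugate pairs of $(-1)$-curves together with the central curve and ends with two Galois-conjugate intersecting $(-1)$-curves, i.e.\ a singular fibre. So the missing ingredients in your proposal are the use of relative $G$-minimality to pin down $a=\frac{k-1}{2}$ and to guarantee the Galois swap, and the correct criterion for when a reducible fibre survives relative minimalization.
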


\begin{proof}
Let $p$ be a $g$-fixed point on $\overline{B}$ and $F$ be a fibre over this point. We can assume that in the neighbourhood of $p$ the element $g$ acts on $\overline{B}$ as a multiplication by $\xi_k$.

The conic bundle $X \rightarrow B$ is relatively $G$-minimal thus the components of $\pi(F)$ are permuted by the Galois group $\Gal \left( \kka / \ka \right)$.

Let $F = E_1 \cup E_2$ be a singular fibre. The element $g$ cannot act trivially on the both components of $F$. If $g$ acts trivially on a component of $F$ then on the other component there is an isolated fixed point of~$g$. Therefore there is an unique singular point lying on a component of $\pi(F)$ and components of $\pi(F)$ cannot be permuted by the Galois group $\Gal \left( \kka / \ka \right)$. Thus the element $g$ acts effectively on both components $E_1$ and $E_2$ and has three fixed points on $F$: the intersection of components $E_1 \cap E_2$, the other fixed point $q_1$ on $E_1$ and the other fixed point $q_2$ on~$E_2$.

In the neighbourhood of $q_1$ the element $g$ acts on $E_1$ as a multiplication by~$\xi_k^a$ for some $a$. In the proof of Lemma \ref{eveninvariantfibre} it was shown that $k = 2a + 1$ and $g$ acts in the neighbourhood of $q_2$ as $\operatorname{diag}\left( \xi_k, \xi_k^a \right)$. Therefore $g$ acts in the neghbourhood of $E_1 \cap E_2$ as $\operatorname{diag}\left( \xi_k^{-a}, \xi_k^{-a} \right)$.

There are three singular points $\pi(q_1)$, $\pi(q_2)$ and $\pi(E_1 \cap E_2)$ on $\pi(F)$. These singularities are toric thus for their minimal resolutions their preimages are chains of rational curves. The selfintersection numbers of these curves for $\pi(q_1)$ and $\pi(q_2)$ are determined by the continued fraction
$$
\frac{2a+1}{a} = 3 - \frac{1}{2 - \frac{1}{\ddots - \frac{1}{2}}}.
$$
\noindent The transform of $\pi(E_1 \cap E_2)$ is a rational $(-2a-1)$-curve.

Let us consider a minimal resolution
$$
\mu: \widetilde{X / G} \rightarrow X / G.
$$
\noindent The preimage $\mu^{-1} \left( \pi(F) \right)$ is a chain of rational curves with the following selfintersections: $-3$, $-2$, $\ldots$, $-2$, $-1$, $-2a-1$, $-1$, $-2$, $\ldots$, $-2$, $-3$. Applying the relative minimal model program one can easily check that $f(F)$ is a singular fibre.

\end{proof}

\begin{proposition}
\label{basequotient}
Let $X$ be a relatively $G$-minimal conic bundle with~$n$ singular fibres and $G_F$ be trivial. Then any MMP-reduction~$Y$ of $X / G$ admits a conic bundle structure. For the number $n$ of singular fibres on $X$ from Table \ref{Table1} the number of singular fibres on $Y$ is no greater than $m$ in Table~\ref{Table1}.

\begin{table}
\caption{}\label{Table1}
\begin{tabular}{|c|c|c|c|}
\hline
{\bf Group} & $n$ & $m$ & {\bf Conditions} \\
\hline
$\CG_{2k}$ & $2kb$ & $a + b$ & $a \leqslant 2$ \\
\hline
$\CG_{2k+1}$ & $a + (2k+1)b$ & $a + b$ & $a \leqslant 2$ \\
\hline
$\DG_{4k}$ & $4kc$ & $a + b + c$ & $a \leqslant 1$, $b \leqslant 2$ \\
\hline
$\DG_{4k+2}$ & $2a + (4k + 2)c$ & $a + b + c$ & $a \leqslant 1$, $b \leqslant 2$ \\
\hline
$\AG_4$ & $4a + 12c$ & $a + b + c$ & $a \leqslant 2$, $b \leqslant 1$ \\
\hline
$\SG_4$ & $8a + 24d$ & $a + b + c + d$ & $a \leqslant 1$, $b \leqslant 1$, $c \leqslant 1$ \\
\hline
$\AG_5$ & $12a + 20b + 60d$ & $a + b + c + d$ & $a \leqslant 1$, $b \leqslant 1$, $c \leqslant 1$ \\
\hline
\end{tabular}
\end{table}

\end{proposition}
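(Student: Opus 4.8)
The plan is to reduce to the faithful action of $G=G_B$ on the base and then to run through the five possibilities for $G$, following singular fibres along $G$-orbits. Since $G_F$ is trivial, $G$ embeds into $\Aut(B)\cong\mathrm{PGL}_2(\ka)$, so by Theorem~\ref{PGL2class} the group $G$ is one of $\CG_k$, $\DG_{2k}$, $\AG_4$, $\SG_4$, $\AG_5$. The curve $B/G$ is smooth, rational, and has a $\ka$-point (the image of any point of $B(\ka)\ne\varnothing$), hence $B/G\cong\Pro^1_{\ka}$; the general fibre of the induced morphism $X/G\to B/G$ is a smooth rational curve, because the orbit of a general point of $B$ is free, so a relative MMP-reduction $Y$ of $X/G$ over $B/G$ is a conic bundle $Y\to B/G\cong\Pro^1_{\ka}$, as in Lemmas~\ref{quotientoffibre} and~\ref{quotientofoddfibre}. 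It then remains to bound the number of singular fibres of $\overline{Y}\to\overline{B}/G$.

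First I would locate the singular fibres of $\XX\to\overline{B}$. The set of the $n$ points of $\overline{B}$ over which the fibre is singular is $G$-invariant, hence a union of $G$-orbits, and by Lemma~\ref{fixedpoints} the stabilizer in $G$ of a point of $\overline{B}$ is cyclic. If such a point $p$ carries a singular fibre and $g\in G$ fixes $p$, then the unique fibre over $p$ is $g$-invariant, so $g$ has odd order by Lemma~\ref{eveninvariantfibre}. Hence singular fibres sit in free $G$-orbits (of length $|G|$) or over points whose cyclic stabilizer has odd order; moreover $G$ acts transitively on each orbit, so an orbit carries singular fibres over all of its points or over none. Reading off Lemma~\ref{orbits}, the orbits with non-trivial stabilizer of odd order are the two fixed points of $\CG_{2k+1}$ (stabilizer $\CG_{2k+1}$), the orbit of length~$2$ of $\DG_{4k+2}$ (stabilizer $\CG_{2k+1}$), the two orbits of length~$4$ of $\AG_4$ (stabilizer $\CG_3$), the orbit of length~$8$ of $\SG_4$ (stabilizer $\CG_3$), the orbits of lengths~$12$ and~$20$ of $\AG_5$ (stabilizers $\CG_5$ and~$\CG_3$), and there are none for $\CG_{2k}$ or $\DG_{4k}$. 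Writing $n$ as $|G|$ times the number of free orbits of singular fibres plus the all-or-nothing contributions of these odd-stabilizer orbits yields exactly the column $n$ of Table~\ref{Table1}, together with the bounds in its last column.

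Then I would count the singular fibres of $\overline{Y}$. Every point of $\overline{B}/G$ carries exactly one fibre of $\overline{Y}$, which is a conic and hence contributes at most~$1$. The base points that can carry a singular fibre of $\overline{Y}$ are of three kinds: (a)~images of free $G$-orbits of singular fibres of $\XX$, each contributing at most~$1$, which supply the parameter $b$ (respectively $c$ or $d$) of Table~\ref{Table1}; (b)~images of the odd-stabilizer orbits that carry singular fibres, which by Lemma~\ref{quotientofoddfibre} still carry singular fibres of $\overline{Y}$ (the stabilizers in question are maximal cyclic subgroups of odd order), supplying the parameters already present in the column $n$; and (c)~branch points of $\overline{B}\to\overline{B}/G$ whose fibre on $\XX$ is smooth --- and over such a branch point with maximal cyclic stabilizer of odd order the fibre of $\overline{Y}$ remains smooth by Lemma~\ref{quotientoffibre}, so the only genuinely new singular fibres of $\overline{Y}$ lie over branch points with stabilizer of even order. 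By Lemma~\ref{orbits} the number of such branch points is two for $\CG_{2k}$, zero for $\CG_{2k+1}$, three (one and two, of different stabilizer orders) for $\DG_{4k}$, two for $\DG_{4k+2}$, one for $\AG_4$, two for $\SG_4$, one for $\AG_5$; bounding each contribution by~$1$ and grouping the branch points by stabilizer order gives precisely the column $m$ and the ``Conditions'' of Table~\ref{Table1}, which one then checks row by row.

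The delicate point is~(c): although the fibre of $\XX$ over a branch point with even-order stabilizer is smooth by Lemma~\ref{eveninvariantfibre}, its image in $X/G$ carries cyclic quotient singularities, and after the minimal resolution and the relative MMP over $B/G$ this single base point may acquire a singular fibre of $\overline{Y}$ --- nothing forces it to stay smooth, and this is exactly what the ``extra'' summands in the $m$-column of Table~\ref{Table1}, constrained by the last column, account for. The only input actually needed for the upper bound, however, is the trivial fact that a single base point carries a single fibre, which is a conic; beyond that the proof is the bookkeeping of matching, for each of the five types of $G$, the orbit lengths and cyclic stabilizer orders from Lemma~\ref{orbits} with the corresponding row of Table~\ref{Table1}.
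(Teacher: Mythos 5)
Your proposal is correct and takes essentially the same route as the paper: both classify the $G$-orbit of fibres lying over each potentially singular fibre of $Y$ via Lemma~\ref{eveninvariantfibre} (even-order stabilizer forces smooth fibres upstairs) and Lemma~\ref{quotientoffibre} (odd maximal-cyclic stabilizer plus smooth fibre forces smooth image), and then do the bookkeeping with the orbit lengths of Lemma~\ref{orbits} to get the columns $n$, $m$ and the conditions of Table~\ref{Table1}. Your additional appeal to Lemma~\ref{quotientofoddfibre} in case (b) is harmless but not needed for the upper bound.
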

\begin{proof}
Denote the map $X \dashrightarrow Y$ by $f$. Let $F$ be a fibre of $\overline{Y} \rightarrow \overline{B} / G$ then $f^{-1}(F)$ is an orbit of $G$. We denote this orbit by $\Omega$ and stabilizer of a fibre in this orbit by $G_S$. By Lemmas \ref{eveninvariantfibre} and \ref{quotientoffibre} if $F$ is a singular fibre on $Y$ then:
\begin{itemize}
\item either $\Omega$ consists of singular fibres and $|G_S|$ is odd.
\item or $\Omega$ consists of smooth fibres and $|G_S|$ is even.
\end{itemize}

Each orbit of fibres corresponds to an orbit of points on the base. Therefore we know numbers of orbits of fibres with non-trivial stabilizers from Lemma \ref{orbits}. Applying this, for each singular fibre $F$ we consider the orbit $\Omega$. The numbers $a$, $b$, $c$ and $d$ in Table \ref{Table1} are numbers of orbits with given length.

The proof is the same for all cases listed in Table \ref{Table1}. Therefore we consider just one of them, for example, $G \cong \DG_{4k+2}$. For any singular fibre $F$ on $Y$ the orbit $\Omega$ consists of $2$ singular fibres, $2k + 1$ smooth fibres or $4k + 2$ singular fibres. If we denote numbers of such fibres $F$ by $a$, $b$ and $c$ respectively then there are $a + b + c$ singular fibres on $Y$ and at least $2a + (4k + 2)c$ singular fibres on $X$.

\end{proof}

Now we prove Theorem \ref{Cbundle}.

\begin{proof}[Proof of Theorem \ref{Cbundle}]
Let $n$ and $m$ be numbers of singular fibres on $\XX \rightarrow \overline{B}$ and $\overline{Y} \rightarrow \overline{B} / G_B$ respectively.

Let us consider a relative $G_B$-MMP-reduction~$Z$ of $X / G_F$. By Lemma \ref{fibrequotient} the number of singular fibres of $\overline{Z} \rightarrow \overline{B}$ is not greater than $n$. The surface $Y$ is a relative MMP-reduction of $Z / G_B$. Applying Proposition $\ref{basequotient}$ to different possibilities for $G_B$ one can check the following:

\begin{itemize}
\item If $n \leqslant 3$ then $m \leqslant 3$.
\item If $n > 3$ then $m \leqslant n$.
\item If $m = n > 3$ then $m = n = 4$ and $G_B \cong \CG_2$ or $G_B \cong \DG_4$.
\end{itemize}

Applying equalities $K_X^2 = 8 - n$ and $K_Y^2 = 8 - m$ finishes the proof.

\end{proof}

Now we prove Proposition \ref{RCbundle}.

\begin{proof}[Proof of Proposition \ref{RCbundle}]
For any relative MMP-reduction $Y$ of $X / G$ one has $K_Y^2 \geqslant 5$ by Theorem \ref{Cbundle}. Thus $X / G \cong Y$ is $\ka$-rational by Corollary \ref{Cbundleratcrit}.
\end{proof}

\section{The Galois group}

In this section we prove Theorem \ref{Cbundleclasses}, Theorem \ref{Oddquotient} and the following proposition:

\begin{proposition}
\label{orbitpossibility}
Let $X \rightarrow B \cong \Pro^1_{\ka}$ be a relatively $G$-minimal conic bundle, such that the group $G$ faithfully acts on the base $B$. Let $\Gamma$ be the Galois group $\Gal\left(\kka / \ka \right)$. Let $Y$ be a relative MMP-reduction over $B / G$ of the minimal resolution of $X / G$ and $f: X \dashrightarrow Y$ be the corresponding map. Let $F$ be a singular fibre over a point $p \in \overline {B}$ such that $f(F)$ is a singular fibre on $\overline{Y}$, $E_1$ and $E_2$ are irreducible components of $F$.

Then there exist elements $g \in G$ and $\gamma \in \Gamma$ such that $g\gamma E_1 = E_2$ and $g\gamma E_2 = E_1$ and (up to conjugation) one of the following possibilities holds:

\begin{enumerate}

\item There exists an element $\delta \in \Gamma$ such that $\delta E_1 = E_2$.

\item The stabilizer of $p$ in $G$ is trivial, $\ord g$ is even and $\gamma p = g^{-1}p$.

\item $G \cong \DG_{4k+2}$, $p$ is a point fixed by the normal cyclic subgroup $\CG_{2k+1}$, $g$ is any element of order $2$ and $\gamma p = gp$.

\item $G \cong \SG_4$, $p$ is a $(123)$-fixed point, $g = (12)$ and $\gamma p = (12)p$.

\item $G \cong \AG_5$, $p$ is a $(12345)$-fixed point, $g = (25)(34)$ and \mbox{$\gamma p = (25)(34)p$}.

\item $G \cong \AG_5$, $p$ is a $(123)$-fixed point, $g = (12)(45)$ and \mbox{$\gamma p = (12)(45)p$}.

\end{enumerate}

\end{proposition}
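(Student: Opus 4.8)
The strategy is to analyze how the Galois group $\Gamma$ acts on the two components $E_1, E_2$ of the singular fibre $F$ over $p$, combined with the information already extracted from Lemmas \ref{eveninvariantfibre}, \ref{quotientoffibre} and \ref{quotientofoddfibre} about which fibres survive to $\overline Y$. Since $X \to B$ is relatively $G$-minimal, the set of components of a singular fibre must be permuted nontrivially by $G \times \Gamma$ (the remark before Lemma \ref{permutation}); and by Lemma \ref{permutation} no element of $G$ alone can do the swap. Hence there exist $g \in G$, $\gamma \in \Gamma$ with $g\gamma E_1 = E_2$, $g\gamma E_2 = E_1$; this gives the element $g\gamma$ in the statement. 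The first dichotomy is then: either $\Gamma$ already contains an element $\delta$ with $\delta E_1 = E_2$ — this is case (1) — or $\Gamma$ preserves both components, in which case $g$ must move $p$ (since $g$ acts faithfully on $B$ and $g$ must send one component to the other) and $\gamma$ must then satisfy $g\gamma p = p$, i.e. $\gamma p = g^{-1} p$.

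Assume from now on that we are in the second case, so $\gamma p = g^{-1} p$ with $g p \neq p$, and we must show $\ord g$ is even (case (2)) unless one of the exceptional configurations (3)–(6) occurs. First I would pin down $\ord g$. If $\ord g$ were odd, then I claim the stabilizer $G_p$ would have to be nontrivial of a special shape: the point $p$ lies in an orbit of $G$, and since $f(F)$ survives as a singular fibre on $\overline Y$, Lemma \ref{quotientofoddfibre} (for odd stabilizers) or Lemma \ref{quotientoffibre}/\ref{eveninvariantfibre} (for even stabilizers) constrains the parity of $|G_S|$ where $G_S$ is the stabilizer of $F$ in the orbit. Combining this with the relation $\gamma p = g^{-1}p$: the element $\gamma$ conjugates the stabilizer $G_p$ to $G_{g^{-1}p} = g^{-1} G_p g$, and the interaction of this with the cyclic structure around $p$ (Lemma \ref{fixedpoints}: only a cyclic subgroup fixes $p$) forces $g$ to normalize, up to the $\Gamma$-twist, the cyclic group fixing $p$. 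Going through the list of possible $G$ from Theorem \ref{PGL2class} and their orbits with nontrivial stabilizer (Lemma \ref{orbits}), the only ways an odd-order $g$ can achieve $g\gamma(E_1) = E_2$ while $\Gamma$ fixes each $E_i$ are: $g$ of order $3$ with $G \cong \SG_4$ at a $(123)$-fixed point but then actually the swapping element must be an order-$2$ element $(12)$, giving (4); and the two $\AG_5$ configurations at a $(12345)$- or $(123)$-fixed point, giving (5) and (6); the $\DG_{4k+2}$ case at a $\CG_{2k+1}$-fixed point gives (3). In every other situation $\ord g$ is forced to be even, which is case (2).

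For the exceptional cases (3)–(6) the work is to verify that the asserted $g$ and the relation $\gamma p = gp$ (equivalently $\gamma p = g^{-1}p$ since $g^2 = 1$ there) are genuinely consistent with $g\gamma E_1 = E_2$: one checks that at a point $p$ fixed by a cyclic subgroup $C$ of odd order $2k+1$ (the $\CG_{2k+1}$ in $\DG_{4k+2}$, the $\langle(123)\rangle$ in $\SG_4$ and $\AG_5$, the $\langle(12345)\rangle$ in $\AG_5$), the local action of $C$ on the two branches of the singular fibre $F$ at the node is, as computed in the proof of Lemma \ref{quotientofoddfibre}, of the form $\operatorname{diag}(\xi, \xi)$ on $T_{E_1 \cap E_2}$ and the two non-nodal fixed points $q_1 \in E_1$, $q_2 \in E_2$ carry conjugate stabilizers; the element $g$ of order $2$ in the normalizer of $C$ inverts $C$ (as $(12)$ inverts $(123)$, $(25)(34)$ inverts $(12345)$, $(12)(45)$ inverts $(123)$, and the reflection inverts $\CG_{2k+1}$), hence $gp \neq p$ lies in the same $C$-orbit, and a suitable $\gamma$ with $\gamma p = gp$ completes the swap. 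The main obstacle is the bookkeeping in the middle paragraph: systematically ruling out, group by group and orbit by orbit, every configuration of $(g, \gamma, p, E_1, E_2)$ with $\ord g$ odd except the four listed, using only the parity constraint from Lemmas \ref{eveninvariantfibre}–\ref{quotientofoddfibre}, the cyclicity of point stabilizers on $\Pro^1$ (Lemma \ref{fixedpoints}), and the orbit-length data of Lemma \ref{orbits}. Once that case analysis is organized correctly the rest is the local computation already done in Lemma \ref{quotientofoddfibre}.
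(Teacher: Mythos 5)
Your opening step (existence of $g\gamma$ with $g\gamma E_1=E_2$, $g\gamma E_2=E_1$, and the observation via Lemma \ref{permutation} that $g$ alone cannot do the swap) matches the paper, but the central case analysis is organized around the wrong dichotomy and omits the two facts that make it close. Since every element of $G$ is defined over $\ka$, it commutes with the Galois action on $\XX$, so $(g\gamma)^{\ord g}=g^{\ord g}\gamma^{\ord g}=\gamma^{\ord g}$; if $\ord g$ is odd this power still swaps $E_1$ and $E_2$, hence $\delta=\gamma^{\ord g}\in\Gamma$ swaps the components and one is automatically in case (1) (this is exactly the paper's Lemma \ref{oddorbits}). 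Therefore your middle paragraph is wrong in substance: an odd-order $g$ can never lead to cases (3)--(6). In those cases the swapping element $g$ has order $2$ ($g=(12)$, $(25)(34)$, $(12)(45)$, or a reflection), and the odd-order element is the generator $h$ of the \emph{nontrivial stabilizer} of $p$; you have conflated the two. The correct branching after excluding (1) is on the stabilizer of $p$, not on the parity of $\ord g$: if the stabilizer is trivial you are in case (2) (evenness of $\ord g$ being automatic by the argument above), and if it is nontrivial you still need two steps that are absent from your plan: (a) $\ord g=2$ --- the paper gets this from Lemma \ref{stabilizer} (every point of the $\langle g\rangle$-orbit of $p$ has the same stabilizer, using $hg^lp=\gamma^{-l}h(g\gamma)^lp=g^lp$), so for $\ord g>2$ a stabilizer element would fix at least three points of $\overline B$ and act trivially, contradicting faithfulness; and (b) the stabilizer is generated by an element $h$ of odd order, which follows from Lemma \ref{eveninvariantfibre} because the fibre $F$ over $p$ is singular. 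Only with (a) and (b) in hand does $hgp=gp$ plus Lemma \ref{fixedpoints} give $ghg^{-1}\in\langle h\rangle$, after which the enumeration using Lemma \ref{orbits} produces precisely the configurations (3)--(6). As written, your plan would not rule out, e.g., $\ord g=4$ with nontrivial stabilizer, nor does it justify the restriction to odd cyclic stabilizers.

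Two smaller points. Your justification that ``$g$ must move $p$'' because ``$g$ must send one component to the other'' is incorrect: by Lemma \ref{permutation} $g$ never exchanges the components of a singular fibre; the relation $\gamma p=g^{-1}p$ needs no such claim, it follows simply because $g\gamma$ maps $F$ to itself and hence fixes $p$ (and if $gp=p$ one lands in case (1), since then $gE_i=E_i$ forces $\gamma E_1=E_2$). Finally, your last paragraph verifying that configurations (3)--(6) are ``genuinely consistent'' is not needed for this proposition, which only asserts a list of possibilities; realizability over suitable fields is handled separately (in the paper, by the constructions of Section 6 via Lemma \ref{examplest}).
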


We start with several auxiliary lemmas.

\begin{lemma}
\label{oddorbits}
In the notation of Proposition \ref{orbitpossibility} if $\ord g$ is odd then there exists an element $\delta \in \Gamma$ such that $\delta E_1 = E_2$.
\end{lemma}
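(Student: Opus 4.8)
The plan is to strip off the ``$G$-part'' of the transformation $g\gamma$ by passing to a well-chosen odd power of it, using that the $G$-action and the Galois action on $\XX$ commute. First I would fix elements $g \in G$ and $\gamma \in \Gamma$ with $g\gamma E_1 = E_2$ and $g\gamma E_2 = E_1$; such a pair exists because $X \to B$ is relatively $G$-minimal and $f(F)$ is a singular fibre, so the two components of $\pi(F)$ cannot be contracted equivariantly and must be interchanged by an element of $G \times \Gamma$ (cf.\ the argument in the proof of Lemma~\ref{quotientofoddfibre}). The key point I would then invoke is that every element of $G \subset \Aut_{\ka}(X)$ is defined over $\ka$, so after base change to $\kka$ it commutes with the semilinear action of $\Gamma = \Gal(\kka/\ka)$ on $\XX$; in particular $g$ and $\gamma$ commute, whence $(g\gamma)^{n} = g^{n}\gamma^{n}$ for every $n \geqslant 1$.

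Next I would take $n = \ord g$, which is odd by hypothesis and finite since $G$ is finite, and set $\delta := (g\gamma)^{n}$. On one hand $\delta = g^{n}\gamma^{n} = \gamma^{n} \in \Gamma$, since $g^{n} = \mathrm{id}$. On the other hand $g\gamma$ interchanges the two elements of the set $\{E_1, E_2\}$, i.e.\ it acts on this set as a transposition, and an odd power of a transposition is again that transposition; hence $\delta$ also interchanges $E_1$ and $E_2$. Thus $\delta E_1 = E_2$ with $\delta \in \Gamma$, which is exactly the conclusion.

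I do not anticipate any genuine difficulty here: once the commutation of the two actions is written down, the argument reduces to the elementary remark that odd powers of a transposition are transpositions. The only step needing a little care is the reduction in the first part --- ensuring the pair $(g,\gamma)$ can be chosen so that $g\gamma$ honestly stabilises the set $\{E_1,E_2\}$, rather than merely sending $E_1$ into the $G$-orbit of $E_2$ in some other fibre. Incidentally, since $g\gamma E_1 = E_2$ with $E_1, E_2$ both components of the fibre $F$ over $p$, the element $g\gamma$ must fix $p$, hence so does $\delta$; this is consistent with the later cases of Proposition~\ref{orbitpossibility}.
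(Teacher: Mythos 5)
Your proof is correct and is essentially the paper's own argument: the paper likewise takes $\delta=(g\gamma)^{\ord g}=\gamma^{\ord g}g^{\ord g}=\gamma^{\ord g}$, using the (implicit) commutation of the $\ka$-defined automorphism $g$ with the Galois action and the fact that an odd power of the swap of $E_1,E_2$ is again the swap. Your extra remarks on the existence of the pair $(g,\gamma)$ and on the commutation are just explicit versions of what the paper leaves implicit.
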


\begin{proof}
One has $g\gamma E_1 = E_2$ and $\ord g$ is odd, therefore
$$
E_2 = \left(g\gamma\right)^{\ord g} E_1 = \gamma^{\ord g} g^{\ord g} E_1 = \gamma^{\ord g} E_1
$$
and $\delta = \gamma^{\ord g}$ is an element of $\Gamma$ such that $\delta E_1 = E_2$.
\end{proof}

\begin{lemma}
\label{stabilizer}
In the notation of Proposition \ref{orbitpossibility} all points in $\langle g \rangle$-orbit has the same stabilizer in $G$.
\end{lemma}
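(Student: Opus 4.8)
The plan is to deduce the whole statement from one fact: the element $g$ normalizes the stabilizer $G_p$ of $p$ in $G$. Granting this, for any closed point $q$ of $\overline{B}$ and any $h \in G$ one has $G_{hq} = h G_q h^{-1}$, so every point $g^j p$ of the $\langle g \rangle$-orbit of $p$ satisfies $G_{g^j p} = g^j G_p g^{-j} = G_p$, which is exactly what the lemma asserts.

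To prove that $g$ normalizes $G_p$, recall from the setup of Proposition \ref{orbitpossibility} that there is an element $\gamma \in \Gamma$ with $g\gamma E_1 = E_2$ (the existence of such $\gamma$ follows from relative $G$-minimality together with Lemma \ref{permutation}). Since $\overline{\varphi}\colon \XX \to \overline{B}$ is defined over $\ka$ and $G$-equivariant, it is equivariant for the whole action of $G \times \Gamma$; hence $g\gamma$ carries the fibre $F = \overline{\varphi}^{-1}(p)$ onto the fibre over $(g\gamma)(p)$. This latter fibre contains $g\gamma E_1 = E_2 \subset F$, and distinct fibres are disjoint, so $(g\gamma)(p) = p$ on $\overline{B}$, that is, $\gamma(p) = g^{-1}(p)$.

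Finally, because $G = G_B \subset \mathrm{PGL}_2(\ka)$ is represented by matrices with entries in $\ka$, the $G$-action on $\overline{B} \cong \Pro^1_{\kka}$ commutes with the Galois action; therefore for every $h \in G$ we have $h(\gamma p) = \gamma p \iff \gamma(hp) = \gamma p \iff hp = p$, so $G_{\gamma p} = G_p$. Combining this with $\gamma(p) = g^{-1}(p)$ gives $G_p = G_{g^{-1}p} = g^{-1} G_p\, g$, which says precisely that $g$ normalizes $G_p$, completing the argument.

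I expect the only genuinely delicate points to be the two structural inputs above: that $\overline{\varphi}$ is equivariant for the combined action of $G \times \Gamma$ (so that stabilizing the fibre $F$ forces fixing the base point $p$), and that the $G$- and $\Gamma$-actions on $\overline{B}$ commute (which holds exactly because $G$ is defined over $\ka$). Both are essentially formalities, and in particular no case analysis over the isomorphism type of $G$ is needed; everything else is routine bookkeeping with stabilizers.
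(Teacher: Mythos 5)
Your argument is correct and rests on exactly the same two inputs as the paper's proof — that $g\gamma$ fixes the base point $p$ (because it maps the fibre $F$ to itself) and that the $G$-action on $\overline{B}$ commutes with the Galois action since $G$ is defined over $\ka$ — so it is essentially the same proof, merely repackaged through ``$g$ normalizes $G_p$'' instead of the paper's direct computation $hg^l p = \gamma^{-l}h\left(g\gamma\right)^l p = g^l p$ for $h \in G_p$. If anything, your formulation makes the full equality of stabilizers (rather than just one inclusion) slightly more explicit.
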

\begin{proof}
Let $H \subset G$ be the stabilizer of the point $p$. For any $h \in H$ one has
$$
hg^l p = \gamma^{-l}h\left(g\gamma\right)^l p = \gamma^{-l} p = g^l \left(g\gamma\right)^{-l} p = g^l p.
$$
Therefore all points in the $\langle g \rangle$-orbit of $p$ are fixed by the group $H$.
\end{proof}

Now we prove Proposition \ref{orbitpossibility}.

\begin{proof}[Proof of Proposition \ref{orbitpossibility}]

The fibre $F$ is singular so its components are permutted by an element
$$
g\gamma \in G \times \Gamma, \qquad g \in G, \qquad \gamma \in \Gamma
$$
\noindent such that $g\gamma E_1 = E_2$, $g\gamma E_2 = E_1$.

By Lemma \ref{oddorbits} if $\ord g$ is odd then there exists an element $\delta \in \Gamma$ such that $\delta E_1 = E_2$. This is case (1) of Proposition \ref{orbitpossibility}.

If $\ord g$ is even and the stabilizer of $p$ in $G$ is trivial then this is case (2) of Proposition \ref{orbitpossibility}.

Assume that $\ord g$ is even and the stabilizer of $p$ in $G$ is non-trivial. By Lemma \ref{stabilizer} any element $h$ of the stabilizer of the point $p$ also fixes the points $gp$ and $g^2p$. If $\ord g > 2$ then $h$ fixes more than $2$ points on $B$, so $h$ acts trivially on $B$. It contradicts the assumption that $G$ acts faithfully on $B$. Thus $\ord g = 2$.

By Lemma $\ref{eveninvariantfibre}$ the stabilizer of the point $p$ is generated by an element $h \in G$ of odd order. By Lemma \ref{stabilizer} one has $hgp = gp$ so by Lemma~\ref{fixedpoints} the points $p$ and $gp$ are fixed only by the element $h$ and its powers. Thus one has $ghg^{-1} \in \langle h \rangle$. Applying Lemma \ref{orbits} one can consider all possibilities (up to conjugation) for $G$, $h$ and $g$ and check that they all are listed in cases (3)--(6) of Proposition \ref{orbitpossibility}:

\begin{itemize}

\item $G \cong \DG_{4k+2}$, $h$ is a generator of the normal subgroup $\CG_{2k + 1}$ and $g$ is any element of order $2$.

\item $G \cong \SG_4$, $h = (123)$ and $g = (12)$.

\item $G \cong \AG_5$, $h = (12345)$ and $g = (25)(34)$.

\item $G \cong \AG_5$, $h = (123)$ and $g = (12)(45)$.

\end{itemize}

\end{proof}

In Section 6 we will show that all possibilities listed in Proposition~\ref{orbitpossibility} occur for a suitable field $\ka$. Now we show that some quotients of $\ka$-rational conic bundles by groups faithfully acting on the base are $\ka$-birationally equivalent.

\begin{lemma}
\label{oddconic}
Let a group $G = \CG_k$ or $G = \DG_{2k}$ act on a relatively \mbox{$G$-minimal} $\ka$-rational conic bundle $X$. Let $N$ be a maximal cyclic subgroup in $G$ of odd order. Then $X / N$ is $\ka$-rational.
\end{lemma}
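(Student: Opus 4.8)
The plan is to find, $\ka$-birational to $X/N$, a relatively minimal conic bundle $Y$ and to verify for it the two conditions of Corollary~\ref{Cbundleratcrit}. If $K_X^2 \geqslant 5$ there is nothing to prove, since $X$ is a $\ka$-rational $N$-equivariant conic bundle and Proposition~\ref{RCbundle} applied with $N$ in place of $G$ gives that $X / N$ is $\ka$-rational; so I would assume $K_X^2 \leqslant 4$. First I would reduce to the case in which $N$ acts faithfully on the base: putting $N_0 = N \cap G_F$, quotienting by $N_0$ and passing to a relative $N_B$-MMP-reduction $Z \to B$ of $X / N_0$ (where $N_B$ is the image of $N$ in $\Aut(B)$), we obtain a relatively $N_B$-minimal conic bundle over $B$ with $N_B \cong N / N_0$ still cyclic of odd order and acting faithfully on $B$, and $X / N \approx Z / N_B$. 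By Lemma~\ref{fibrequotient} the number of singular fibres over $\kka$ does not increase under $X \dashrightarrow Z$; moreover, since a cyclic group of odd order cannot interchange the two components of a degenerate conic, the passage from $X$ to $Z$ changes no split singular fibre into one irreducible over $\ka$ or vice versa, and a singular fibre irreducible over $\ka$ cannot be contracted in a relative minimal model program over $B$. Hence $Z$ lies in the setting of Proposition~\ref{orbitpossibility}, and $\overline Z$ has no more singular fibres irreducible over $\ka$ than $\XX$ does.

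The heart of the matter is to bound the number $m$ of singular fibres of $\overline Y \to \overline B / N_B$, where $Y$ is a relative MMP-reduction of $Z / N_B$ and $f \colon \overline Z \dashrightarrow \overline Y$ is the associated rational map. By Lemmas~\ref{quotientoffibre} and \ref{quotientofoddfibre}, applied to a generator of the odd cyclic group $N_B$, the map $f$ carries smooth fibres to smooth fibres and singular fibres to singular fibres, so the singular fibres of $\overline Y$ are in bijection with the $N_B$-orbits of singular fibres of $\overline Z$. For any singular fibre $F$ in such an orbit, Proposition~\ref{orbitpossibility} must be in case~(1) --- this is precisely Lemma~\ref{oddorbits}, as every element of $N_B$ has odd order --- so the two components of $F$ are interchanged by an element of $\Gamma = \Gal(\kka / \ka)$; that is, $F$ is irreducible over $\ka$. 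Since distinct singular fibres of $\overline Y$ come from disjoint $N_B$-orbits, $m$ is at most the number of singular fibres of $\overline Z$ irreducible over $\ka$, which by the previous paragraph is at most the corresponding number for $\XX$. But $X$ is $\ka$-rational, so its relative $\ka$-minimal model (as a conic bundle, forgetting the group) is a $\ka$-rational relatively minimal conic bundle and hence has $K^2 \geqslant 5$ by Corollary~\ref{Cbundleratcrit}; that model retains exactly the singular fibres of $\XX$ irreducible over $\ka$, so there are at most $3$ of them. Therefore $m \leqslant 3$ and $K_Y^2 = 8 - m \geqslant 5$.

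It remains to see that $Y(\ka) \neq \varnothing$. Since $\ka$ is infinite and $X$ is $\ka$-rational, $X$ has a $\ka$-point with trivial $N$-stabiliser; its image is a smooth $\ka$-point of $X / N$, and as $Y$ is smooth and $\ka$-birational to $X / N$, the Lang--Nishimura lemma gives a $\ka$-point of $Y$. Corollary~\ref{Cbundleratcrit} then shows that $Y$, and hence $X / N \approx Y$, is $\ka$-rational. The step I expect to be most delicate is the bookkeeping around irreducibility of singular fibres over $\ka$: one must check carefully that this property is preserved under the quotient by the fibrewise part $N \cap G_F$ --- where the odd order of $N$ is used essentially, a transposition of the two components being impossible --- and that such a fibre is rigid under relative contractions over $B$. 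Granting this, the conclusion follows by combining Proposition~\ref{orbitpossibility} with the rationality criterion of Corollary~\ref{Cbundleratcrit}.
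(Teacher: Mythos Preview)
Your proof is correct and follows essentially the same route as the paper's: both use Lemma~\ref{oddorbits} to show that every singular fibre surviving to a relative MMP-reduction of $X/N$ must already have its components permuted by $\Gamma$ on $X$, and then bound the number of such fibres by comparing with the (non-equivariant) relative minimal model $\widetilde{X}$ of $X$ and invoking Corollary~\ref{Cbundleratcrit}. You add extra care in reducing first to a faithful base action via $N_0 = N\cap G_F$ and in verifying $Y(\ka)\neq\varnothing$, both of which the paper leaves implicit.
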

\begin{proof}
Let $\Gamma = \Gal\left(\kka / \ka \right)$. Let $F$ be a singular fibre over a point $p \in \overline{B}$. By Lemma \ref{oddorbits} if for an element $g \in N$ there exists an element $\gamma \in \Gamma$ such that $g \gamma$ permutes components of $F$, then there exists an element $\delta \in \Gamma$ permuting the components of a singular fibre $F$. Otherwise, there is no element $\varepsilon \in \Gamma$ permuting the components of the image of $F$ in $X / N$. Therefore by Lemma \ref{quotientoffibre} the number of singular fibres on relative $G / N$-MMP reduction $Y$ of $X / N$ whose components are permuted by an element of $\Gamma$ is not greater than the number of singular fibres on $X$ whose components are permuted by an element of $\Gamma$.

Let $\widetilde{X}$ and $\widetilde{Y}$ be relative minimal models of $X$ and $Y$ respectively. Then the number of singular fibres of $\widetilde{Y}$ is not greater than the number of singular fibres of $\widetilde{X}$. Thus $K_{\widetilde{Y}}^2 \geqslant K_{\widetilde{X}}^2 \geqslant 5$. Therefore $Y \approx \widetilde{Y}$ are \mbox{$\ka$-rational} by Corollary \ref{Cbundleratcrit}.
\end{proof}

Now we prove Theorem \ref{Cbundleclasses}.

\begin{proof}[Proof of Theorem \ref{Cbundleclasses}]
Let us consider a $\ka$-rational $G$-surface $X$. Obviously, the quotient $X / G$ is birationally equivalent to $\widetilde{X} / G$ where $\widetilde{X}$ is a $G$-equivariant minimal model of $X$. By Theorem \ref{Minclass} the surface $\widetilde{X}$ is a del Pezzo surface or admits a structure of $G$-equivariant conic bundle.

If $\widetilde{X}$ is a del Pezzo surface then we are done.

If $\widetilde{X} \rightarrow B \cong \Pro^1_{\ka}$ is a conic bundle then there is a normal subgroup $G_F \subset G$ acting trivially on $B$. The quotient $\widetilde{X} / G_F$ is $\ka$-rational by Lemma \ref{fibrequotient} and $X / G$ is $\ka$-birationally equivalent to $(\widetilde{X / G_F}) /G_B$, where $G_B = G / G_F$ and $\widetilde{X / G_F}$ is the minimal resolution of singularities of~$X / G_F$.

Let $Y$ be a relative $G_B$-MMP reduction of $\widetilde{X / G_F}$. Then the group $G_B$ faithfully acts on the base $B$ thus $G_B$ is isomorphic to $\CG_k$, $\DG_{2k}$, $\AG_4$, $\SG_4$ or~$\AG_5$. In the first two cases we can consider a maximal cyclic group $N$ of odd order. By Lemma \ref{oddconic} the quotient $Y / N$ is a $\ka$-rational conic bundle. Thus the quotient $X / G$ is $\ka$-birationally equivalent to a quotient of the $\ka$-rational conic bundle $Y / N$ by a cyclic or dihedral group $G / N$ of order~$2^n$.
\end{proof}

To prove Theorem \ref{Oddquotient} we use the following well-known lemma.

\begin{lemma}
\label{toric}
Let $\XX$ be an $n$-dimensional toric variety over a field $\kka$ of arbitrary characteristic and let $G$ be a finite subgroup conjugate to a subgroup of an $n$-dimensional torus $\overline{\mathbb{T}}^n \subset \XX$ acting on $\XX$. Then the quotient $\XX / G$ is a toric variety.
\end{lemma}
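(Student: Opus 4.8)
The plan is to reduce to affine charts and use the combinatorial description of toric varieties. Since conjugation by an element of $\Aut(\XX)$ is an isomorphism and does not affect the quotient up to isomorphism, I would first replace $G$ by an honest finite subgroup of the torus $\overline{\mathbb{T}}^n = \operatorname{Spec}\kka[M]$, where $M \cong \Z^n$ is the character lattice. Write $\XX = \XX(\Sigma)$ for a fan $\Sigma$ in $N_\R$ with $N = \operatorname{Hom}(M,\Z)$, so that $\XX$ is glued from the affine charts $U_\sigma = \operatorname{Spec}\kka[S_\sigma]$, $S_\sigma = \sigma^\vee \cap M$, along the open subvarieties $U_{\sigma\cap\tau} = U_\sigma \cap U_\tau$. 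Because $G \subset \overline{\mathbb{T}}^n$, the group $G$ preserves each $U_\sigma$, and each $U_{\sigma\cap\tau}$ is a union of torus orbits, hence is $G$-invariant and saturated for the $G$-action; therefore (using that $G$ is finite, so that affine quotients exist, are affine, and restrict well to saturated invariant opens) $\XX/G$ is obtained by gluing the affine quotients $U_\sigma/G = \operatorname{Spec}\bigl(\kka[S_\sigma]\bigr)^G$ along the $U_{\sigma\cap\tau}/G$.

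Next I would identify the invariant rings. Each $g \in G \subset \overline{\mathbb{T}}^n$ acts on a monomial by the scalar $g \cdot \chi^m = \chi^m(g)\,\chi^m$, so if $f = \sum_m c_m \chi^m \in \kka[S_\sigma]$ is $G$-invariant then $c_m\bigl(\chi^m(g)-1\bigr)=0$ for all $m$ and $g$; hence
$$
\bigl(\kka[S_\sigma]\bigr)^G = \kka\bigl[\sigma^\vee \cap M'\bigr], \qquad M' := \{\, m \in M : \chi^m(g) = 1 \text{ for all } g \in G \,\}.
$$
The map $m \mapsto \bigl(g \mapsto \chi^m(g)\bigr)$ realizes $M/M'$ as a subgroup of the finite group $\operatorname{Hom}(G,\kka^*)$, so $M'$ is a subgroup of $M$ of finite index; in particular $N' := \operatorname{Hom}(M',\Z)$ contains $N$ with finite index and shares the ambient space $N_\R = N'_\R$. (When $\kka$ is algebraically closed — the case actually used — the map $M \to \operatorname{Hom}(G,\kka^*)$ is onto and $|\operatorname{Hom}(G,\kka^*)|=|G|$, so $[M:M']=|G|$; note also that a finite subgroup of $(\kka^*)^n$ automatically has order prime to $\operatorname{char}\kka$, so nothing goes wrong in positive characteristic.) Thus each $U_\sigma/G = \operatorname{Spec}\kka[\sigma^\vee \cap M']$ is precisely the affine toric variety attached to the cone $\sigma$ and the lattice $N'$, with dense torus $\overline{\mathbb{T}}^n/G = \operatorname{Spec}\kka[M']$.

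Finally I would observe that the gluing data for the charts $U_\sigma/G$ along the $U_{\sigma\cap\tau}/G$ is induced from that of the $U_\sigma$, hence is exactly the gluing data defining $\XX(\Sigma)$ relative to the lattice $N'$. Therefore $\XX/G \cong \XX(\Sigma)_{N'}$ is a toric variety, with the quotient torus $\overline{\mathbb{T}}^n/G$ acting in the evident way. I expect the only genuinely delicate points to be bookkeeping: checking that the finite quotient is compatible with the gluing (so that one may legitimately compute chart by chart) and pinning down the lattice $M'$ and its index; both are standard and deserve a line each.
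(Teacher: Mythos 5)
The paper states Lemma \ref{toric} without proof, citing it as well known, so there is no argument of the paper to compare yours against; what you give is the standard proof and it is correct. Your chart-by-chart reduction, the identification $\bigl(\kka[\sigma^\vee\cap M]\bigr)^G=\kka[\sigma^\vee\cap M']$ with $M'$ the finite-index sublattice of characters trivial on $G$, and the regluing of the quotient charts as the toric variety of the same fan with respect to the refined lattice $N'\supseteq N$ are all sound (including the observations that torus-invariant opens are $G$-invariant and saturated, and that a finite subgroup of the torus has order prime to the characteristic), with only the routine gluing bookkeeping left implicit, as you acknowledge.
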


Now we prove Theorem \ref{Oddquotient}.

\begin{proof}[Proof of Theorem \ref{Oddquotient}]

Consider a $G$-minimal model $X$ of $S$. Then $X$ is either a conic bundle or a del Pezzo surface by Theorem \ref{Minclass}.

If $X$ is a conic bundle then $X / G$ is $\ka$-rational by Theorem \ref{Cbundleclasses}. So we can assume that $X$ is a del Pezzo surface.

The classification of finite groups of automorphisms of del Pezzo surfaces is well-known (see \cite{DI1} and \cite[footnote to Theorem~1.1]{Pr13}). If $X$ is a del Pezzo surface of degree $5$ or less and $G$ is a group of odd order, $|G| > 9$ and $|G| \ne 15$ then one can check that order of $G$ is divisible by $3$ and $G$ is not cyclic. It contradicts assumptions of the corollary.

If $X$ is a $G$-minimal del Pezzo surface of degree $6$ or more then $\XX$ is isomorphic to $\Pro^2_{\kka}$, $\Pro^1_{\kka} \times \Pro^1_{\kka}$ or to a blowup of $\Pro^2_{\kka}$ at three points not lying on a line. The corresponding groups of automorphisms are $\mathrm{PGL}_3 \left( \kka \right)$, $\left( \mathrm{PGL}_2 \left( \kka \right) \times \mathrm{PGL}_2 \left( \kka \right) \right) \rtimes \CG_2$ and $\overline{\mathbb{T}}^2 \rtimes \SG_3$ respectively.

Any cyclic subgroup of $\mathrm{PGL}_3 \left( \kka \right)$ is conjugate to a diagonal subgroup. From classification of finite subgroups of $\mathrm{PGL}_3 \left( \kka \right)$ (see \cite[Chapter V]{Bl17}) one can easily see that if $|G|$ is odd and is not divisible by $3$ then $G$ is also conjugate to a diagonal subgroup. Any diagonal subgroup is a subgroup of a torus acting on $\Pro^2_{\kka}$.

If $\XX \cong \Pro^1_{\kka} \times \Pro^1_{\kka}$ and $|G|$ is odd then $G$ is a subgroup of~\mbox{$\mathrm{PGL}_2 \left( \kka \right) \times \mathrm{PGL}_2 \left( \kka \right)$}. By Theorem \ref{PGL2class} the group $G$ is a subgroup of $\CG_m \times \CG_l$ for some odd $m$ and $l$. Thus it is a subgroup of a torus acting on $\XX$.

If $\XX \rightarrow \Pro^2_{\kka}$ is a blowup of three points not lying on a line and $G$ is not a subgroup of a torus acting on $\XX$ then the composition of maps
$$
G \hookrightarrow \overline{\mathbb{T}}^2 \rtimes \SG_3 \twoheadrightarrow \SG_3
$$
\noindent has a nontrivial image $\CG_3$. In particular, $|G|$ is divisible by $3$. Therefore by conditions of Theorem \ref{Oddquotient} the group $G$ should be cyclic. The map $\XX \rightarrow \Pro^2_{\kka}$ is $G$-equivariant. Thus $G$ is a subgroup of a torus acting on $\Pro^2_{\kka}$ and $\XX$.

If $G$ is a subgroup of a torus acting on $\XX$ then by Lemma \ref{toric} the quotient $X / G$ is a $\ka$-form of a toric surface. Thus an MMP-reduction $Y$ of $X$ is a $\ka$-form of toric surface which is either a del Pezzo surface of degree no less than $6$ or a conic bundle with no more than $2$ singular fibres since other minimal surfaces are not toric. Therefore $K_Y^2 \geqslant 6$. So both $X / G \approx Y$ and $Y$ are $\ka$-rational by Theorem \ref{ratcrit}.
\end{proof}

\section{Example of nonrationality}

In this section we construct explicit examples of non-$\ka$-rational quotient of $\ka$-rational conic bundle. Note that by Theorem \ref{Cbundle} and Corollary \ref{Cbundleratcrit} to construct such an example we should realise one of the cases (2)--(6) of Proposition \ref{orbitpossibility}. We show that all these cases can be realized for suitable field $\ka$.

\begin{example}
\label{keyexample}

Consider a projective plane $\Pro^2_{\ka}$ with coordinates $x$, $y$ and $z$ and a projective line $\Pro^1_{\ka}$ with coordinates $t_1$ and $t_0$. Let a finite group $G$ act on $\Pro^1_{\ka}$ and $g \in G$ be an element of even order $l$ such that points $(0 : 1)$ and $(1 : 0)$ are fixed by $g$. In particular, one has
$$
g \left(\lambda : 1 \right) = \left( \xi_l \lambda : 1 \right).
$$
The group $G$ naturally acts on homogeneous polynomials in variables $t_1$ and $t_0$.

Let $Z$ be a hypersurface in $\Pro^2_{\ka} \times \Pro^1_{\ka}$ given by the equation

$$
Ax^2 \prod \limits_{i=1}^n \prod \limits_{h \in G} h \left( t_1 - \lambda_i t_0 \right) + By^2 \left( \prod \limits_{h \in G} h \left( t_0 \right) \right)^n + Cz^2 \left( \prod \limits_{h \in G} h \left( t_0 \right) \right)^n = 0.
$$

The projection $\pi: \Pro^2_{\ka} \times \Pro^1_{\ka} \rightarrow \Pro^1_{\ka}$ defines on $Z$ a structure of a bundle over $B \cong \Pro^1_{\ka}$ whose general fibre is a smooth conic. Let $X \rightarrow B$ be a relative $G$-MMP-reduction over $B$ of $Z$. Note that the conic bundle $\pi: Z \rightarrow B$ has singular fibres exactly over points $h\left( \lambda_i : 1 \right)$ and $h(1 : 0)$, $h \in G$. Moreover, by Lemma~\ref{eveninvariantfibre} the fibres of $\varphi: X \rightarrow B$ over points $h(1 : 0)$ are smooth since these points are fixed by the elements of even order $hgh^{-1}$.

Now assume that in the field $\ka$ there is an element $u$ such that
$$
\Gal \left( \ka \left( \sqrt[l]{u} \right) / \ka \right) \cong \CG_l.
$$
\noindent That means that the equation $x^l = u$ has no solutions in $\ka$ and $\xi_l \in \ka$. Let $\lambda_i = \mu_i \sqrt[l]{u}$, $\mu_i \in \ka$ and $\frac{B}{C} = -u$. In this case the surface $Z$ is well defined since $G$-orbit of the point $\left( \lambda_i : 1 \right)$ includes all solutions of the equation $t_1^l - \mu_i^l u t_0^l = 0$ defined over $\ka$. Let
$$
\gamma \in \Gal \left( \ka \left( \sqrt[l]{u} \right) / \ka \right)
$$
\noindent be the generator of the group acting by $\gamma: \lambda_i \mapsto \xi^{-1}_l \lambda_i$.

Now fix a $\ka$-point $q \in \Pro^1_{\ka}$ and find a coefficent $A$ such that fibre over $q$ contains a $\ka$-point.

We show that this construction gives examples of $\ka$-nonrational quotients of $\ka$-rational surfaces.

\end{example}

We want to show that this example exists for some field $\ka$ and group $G$ and gives a class of quotients of \mbox{$\ka$-rational} surfaces which is \mbox{$\ka$-birationally} unbounded class.

\begin{proposition}
\label{ratex}

Let a field $\ka$ contain an element $u$ such that
$$
\Gal \left( \ka \left( \sqrt[l]{u} \right) / \ka \right) \cong \CG_l.
$$
In the notation of Example \ref{keyexample} assume that the points $(\lambda_i : 1)$ have trivial stabilizers in $G$. Then the surface $X$ is $\ka$-rational and any relative MMP-reduction $Y$ of $X / G$ admits a conic bundle structure with at least $n$ singular fibres. In particular, if $n > 3$ then $Y$ is not $\ka$-rational. If $n \geqslant 8$ then the construction of Examle \ref{keyexample} gives an $(n - 3)$-dimensional family of surfaces of different $\ka$-birational types.

\end{proposition}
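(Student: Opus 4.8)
The plan is to deduce all three assertions from the explicit description of the fibres of $Z$ together with the rationality criterion of Corollary~\ref{Cbundleratcrit}.

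\emph{Rationality of $X$.} By the choice of the coefficient $A$ the fibre of $Z$ over $q$ has a $\ka$-point, so $X(\ka)\ne\varnothing$. By Lemma~\ref{eveninvariantfibre} the fibres of $\XX\to\overline B$ over the points $h(1:0)$ are smooth, hence the only singular fibres of $\XX\to\overline B$ lie over the points $h(\lambda_i:1)$, and each of them is the conic $\{By^2+Cz^2=0\}$, which, as $B/C=-u$, splits over $\ka(\sqrt u)\subseteq\ka(\sqrt[l]{u})$ into the two lines $\{z=\sqrt u\,y\}$ and $\{z=-\sqrt u\,y\}$. Let $\delta$ generate $\Gal(\ka(\sqrt[l]{u})/\ka)\cong\CG_l$ with $\delta(\sqrt[l]{u})=\xi_l\sqrt[l]{u}$. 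Then $\delta$ maps $(\lambda_i:1)=(\mu_i\sqrt[l]{u}:1)$ to $g(\lambda_i:1)$, and since $\delta(\sqrt u)=\xi_l^{l/2}\sqrt u=-\sqrt u$ it interchanges the two lines. Because $l$ is even, the $\langle\delta\rangle$-orbit of a chosen component of the fibre over $h(\lambda_i:1)$ therefore contains exactly one component of each of the $l$ fibres over the points $hg^j(\lambda_i:1)$, $0\le j<l$; as $\Gal(\kka/\ka)$ acts on the set of all these $(-1)$-curves through $\langle\delta\rangle$, this orbit is defined over $\ka$. Contracting all such orbits yields a $\ka$-morphism from $X$ onto a conic bundle over $B\cong\Pro^1_{\ka}$ with no singular fibres, which therefore has $K^2=8$ and, having a $\ka$-point, is $\ka$-rational by Corollary~\ref{Cbundleratcrit}; hence $X$ is $\ka$-rational. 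I expect this step to be the main obstacle: it forces one to keep track of the $\Gal(\kka/\ka)$-action on the components of all the fibres over an orbit $hg^j(\lambda_i:1)$ simultaneously and to use the parity of $l$.

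\emph{A lower bound for the singular fibres of $Y$.} Since $G$ acts faithfully on $B$, the induced morphism $X/G\to B/G$ has geometrically rational generic fibre, its relative MMP-reduction $Y$ is a conic bundle over $B/G\cong\Pro^1_{\ka}$, and, as $Z$ and $X$ are $G$-equivariantly birational over $B$, the surface $Y$ is also a relative MMP-reduction of $Z/G$. The polynomials $P(t)=\prod_i\prod_h h(t_1-\lambda_i t_0)$ and $Q(t)=\prod_h h(t_0)$ are $G$-semi-invariant, transforming by $\chi^n$ and $\chi$ respectively for one character $\chi$ of $G$; hence $R:=P/Q^n$ lies in $\ka(t)^G=\ka(B/G)$ and $Z/G$ is isomorphic over $\ka(B/G)$ to the conic $\{ARx^2+By^2+Cz^2=0\}$. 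For $\mu_1,\dots,\mu_n$ in general position $R$ has a simple zero over each of the $n$ pairwise distinct $\ka$-points $s_i\in(B/G)(\ka)$ lying under the orbits $G\cdot(\lambda_i:1)$, the orbits being unramified over $B/G$ because they have trivial stabilizer. Over $s_i$ the fibre is the rank-two conic $\{By^2+Cz^2=0\}$, which is irreducible over $\ka$ since $u$ is not a square in $\ka$ --- indeed $\Gal(\ka(\sqrt[l]{u})/\ka)\cong\CG_l$ with $l$ even forces $x^l-u$ to be irreducible over $\ka$. Its two geometric components are interchanged by $\Gal(\kka/\ka)$ and meet each other, so neither can be contracted; thus the fibre of $\overline Y$ over $s_i$ is singular and $\overline Y$ has at least $n$ singular fibres. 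Consequently $K_Y^2\le 8-n$, which is $\le 4<5$ when $n>3$, so $Y\approx X/G$ is not $\ka$-rational by Corollary~\ref{Cbundleratcrit}.

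\emph{The $(n-3)$-dimensional family.} Suppose $n\ge 8$, so that $K_Y^2\le 8-n\le 0$. By Lemma~\ref{rigidbundle} the $\ka$-birational class of $Y$ determines, up to $\Aut(B/G)=\mathrm{PGL}_2(\ka)$, the set of points of $\overline{B/G}\cong\Pro^1_{\kka}$ over which $\overline Y$ is singular, and this set contains $\{s_1,\dots,s_n\}$. As $(\mu_1,\dots,\mu_n)$ varies, each $s_i$ ranges over the Zariski dense image of the non-constant map $\A^1\to B/G$ sending $\mu$ to the $G$-orbit of $(\mu\sqrt[l]{u}:1)$, so the configuration $\{s_1,\dots,s_n\}$ fills up an $n$-dimensional family on $B/G$; dividing by the $3$-dimensional group $\mathrm{PGL}_2(\ka)$ yields an $(n-3)$-dimensional family of pairwise non-$\ka$-birationally equivalent surfaces $Y$. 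For each value of the $\lambda_i$ one chooses $A$ as in the first step, so that each such $Y$ is a non-$\ka$-rational quotient of the $\ka$-rational surface $X$; this is the required family.
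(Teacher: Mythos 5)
Your first and third steps are essentially the paper's own argument: the parity bookkeeping for the $\langle\delta\rangle$-orbit of fibre components is a correct (and more detailed) expansion of the paper's remark that $\gamma F\ne F$ for every singular fibre of $\varphi$, and the dimension count via Lemma~\ref{rigidbundle} and $\dim\mathrm{PGL}_2(\ka)=3$ is the same as in the paper. The gap is in your second step. Computing the generic fibre of $Z/G\to B/G$ via the $G$-invariant function $R=P/Q^n$ is fine, but you then conclude that $\overline Y$ has a singular fibre over $s_i$ because on the model $\{ARx^2+By^2+Cz^2=0\}$ the fibre over $s_i$ is a pair of Galois-conjugate lines which meet, ``so neither can be contracted''. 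That only rules out contractions on that particular model; $Y$ is by definition a relative MMP-reduction of $X/G$ (minimal resolution of $X/G$ followed by contractions), relatively minimal models over the base are not unique, and singularity of a fibre of one birational model over $B/G$ does not automatically transfer to $Y$. Indeed the same reasoning, applied to $x^2-ay^2=s^2z^2$ with $a$ a nonsquare and $s$ a local parameter on the base, would predict a singular fibre at $s=0$ on every relatively minimal model, whereas after resolving the node of the total space the two conjugate components become disjoint $(-1)$-curves forming a Galois orbit, can be contracted over $\ka$, and the relatively minimal model has a smooth fibre there. Your auxiliary claim that $Y$ ``is also a relative MMP-reduction of $Z/G$'' merely because $Z$ and $X$ are $G$-equivariantly birational over $B$ is likewise not justified by the definition.

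The assertion itself is true and can be repaired in two ways. The paper's way is direct: since the stabilizer of $(\lambda_i:1)$ in $G$ is trivial, the quotient morphism is an isomorphism in a neighbourhood of the fibre $F$, so $X/G$ is smooth near $f(F)$, the minimal resolution does not modify this fibre, and the subsequent contractions of the relative MMP cannot involve either of its two components, because they meet and are interchanged by $\gamma$ (as $-B/C=u$ is not a square in $\ka$); hence the fibre over $s_i$ stays singular on $Y$. Alternatively, staying within your generic-fibre setup, you must invoke the invariance of the degeneration locus under birational maps over the base between relatively minimal conic bundles --- either via factorization into links of type II over $B/G$, exactly as in the proof of Lemma~\ref{rigidbundle}, or via the residue of the associated quaternion algebra at $s_i$, which is the class of $u$ and is nontrivial precisely because $v_{s_i}(R)$ is odd and $u\notin\ka^{*2}$; note that the parity of $v_{s_i}(R)$ is exactly what separates your situation from the counterexample above, so this step cannot be waved through.
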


\begin{proof}

All components of singular fibres of $\varphi: X \rightarrow B$ are defined over \mbox{$\ka \left( \sqrt[l]{u} \right)$} and for any singular fibre $F$ of $\varphi$ one has $\gamma F \ne F$. So one can $\Gal \left( \ka \left( \sqrt[l]{u} \right) / \ka \right)$-equivariantly contract a component in each singular fibre of $\varphi$ and obtain a conic bundle $S \rightarrow B$ without singular fibres. By construction $S$ has a $\ka$-point. So both $S$ and $X \approx S$ are $\ka$-rational by Corollary~\ref{Cbundleratcrit}.

Consider a singular fibre $F$ of $\varphi$ over $\left( \lambda_i : 1 \right)$. Let $f: X \rightarrow X / G$ be the quotient morphism. Note that the image of the point $\left( \lambda_i : 1 \right)$ is a $\ka$-point on $\Pro^1_{\ka} / G$ since the set $\left\{ h\left( \lambda_i : 1 \right) \right\}_{h \in G}$ is defined over $\ka$. The image $f(F)$ is a singular conic given by equation $By^2 + Cz^2 = 0$. Therefore the element $\gamma$ permutes components of $f(F)$ since $\frac{B}{C} = -u$. Thus the conic bundle $Y$ has at least $n$ singular fibres. If $n > 3$ then $Y$ is not $\ka$-rational by \mbox{Corollary \ref{Cbundleratcrit}}.

The conic bundle $Y \rightarrow B / G$ has $n$ singular fibres over points \mbox{$p_1$, \ldots, $p_n$} which are the images of points $\left( \lambda_i : 1 \right)$ where $\lambda_i = \mu_i \sqrt[l]{u}$. The elements $\mu_i$ can be any elements of $\ka$ such that points $\left( \lambda_i : 1 \right)$ are not fixed by any element of $G$ and lie in different orbits of $G$. Therefore $n$-tuples $\left( \mu_1, \ldots, \mu_n \right)$ form an $n$-dimensional family. The group $G$ is finite therefore we have an $n$-dimensional family of $n$-tuples of points $\left( p_1, \ldots, p_n \right)$ on $B / G$. Since $n \geqslant 8$ by Lemma \ref{rigidbundle} two conic bundles can be birationally equivalent only if the group $\mathrm{PGL}_2\left( \ka \right)$ moves one $n$-tuple of points to other. One has $\operatorname{dim}_{\ka} \mathrm{PGL}_2\left( \ka \right) = 3$ therefore the construction of Example \ref{keyexample} gives an $(n - 3)$-dimensional family of surfaces of different $\ka$-birational types.

\end{proof}

\begin{lemma}
\label{UmboundG}

Let $\ka$ be a field of characteristic zero such that not all elements of $\ka$ are squares, $G$ be a finite group acting on $\Pro^1_{\ka}$ and $g \in G$ be an element of order $2$ such that its fixed points on $\Pro^1_{\ka}$ are defined over $\ka$. Then the class of quotients of $\ka$-rational surfaces by the group $G$ is $\ka$-birationally unbounded.

\end{lemma}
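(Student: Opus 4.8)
The plan is to realise the construction of Example~\ref{keyexample} with $l = \ord g = 2$ for arbitrarily many singular fibres, apply Proposition~\ref{ratex}, and then observe that the resulting quotients carry a $\ka$-birational invariant — the number of singular fibres of a relatively minimal conic bundle model — which is unbounded along the class, whereas on any $\ka$-birationally bounded class it would be bounded.

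First I would arrange the data of Example~\ref{keyexample}. Since not all elements of $\ka$ are squares, fix a non-square $u \in \ka$; then $\ka(\sqrt{u})/\ka$ is Galois with group $\CG_2$, and as $\Char\ka = 0$ we have $\xi_2 = -1 \in \ka$, so $\Gal(\ka(\sqrt[l]{u})/\ka) \cong \CG_l$ for $l = 2$. By hypothesis $g \in G$ has order $2$ and its two fixed points on $\Pro^1_{\ka}$ are $\ka$-points, so after conjugating the $G$-action by a suitable element of $\mathrm{PGL}_2(\ka)$ — which alters neither the isomorphism class of $G$ nor the class of quotients in question — we may assume these fixed points are $(0:1)$ and $(1:0)$, so that $g(\lambda:1) = (\xi_2\lambda:1)$. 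This is precisely the situation of Example~\ref{keyexample}.

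Next, given $n$, I would pick $\mu_1,\dots,\mu_n \in \ka$ so that the points $(\lambda_i:1) = (\mu_i\sqrt{u}:1)$ have trivial stabilizer in $G$ and lie in pairwise distinct $G$-orbits. This is possible for every $n$ because $\ka$ is infinite, the set of points of $\Pro^1_{\kka}$ with non-trivial $G$-stabilizer is finite by Lemma~\ref{orbits}, and each $G$-orbit is finite; so one chooses the $\mu_i$ successively, avoiding finitely many values at each step. Taking $\frac{B}{C} = -u$ and $A \in \ka$ as in Example~\ref{keyexample} (so that the fibre over some fixed $\ka$-point of $B$ carries a $\ka$-point), one obtains the surface $Z$; it is defined over $\ka$ — the only delicate point, that $\prod_{h\in G} h(t_1 - \lambda_i t_0)$ has coefficients in $\ka$, holds because $\xi_l = -1 \in \ka$, exactly as in Example~\ref{keyexample} — and hence a relative $G$-MMP-reduction $X \to B$. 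By Proposition~\ref{ratex}, $X$ is $\ka$-rational, so $X/G$ is a quotient of a $\ka$-rational surface by $G$, and every relative MMP-reduction $Y$ of $X/G$ is a conic bundle over $B/G \cong \Pro^1_{\ka}$ with at least $n$ singular fibres.

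It remains to deduce unboundedness. Fix $n \geqslant 8$. Then $Y$ is a relatively minimal conic bundle $\ka$-birationally equivalent to $X/G$ with $m \geqslant n \geqslant 8$ singular fibres, so $K_Y^2 = 8 - m \leqslant 0$; by Lemma~\ref{rigidbundle} the integer $m$ depends only on the $\ka$-birational class of $X/G$. Thus the class of $G$-quotients of $\ka$-rational surfaces contains, for every $n \geqslant 8$, a member for which this invariant is at least $n$. If the class were $\ka$-birationally bounded, say by $\varphi : \mathcal{X} \to \mathcal{S}$ with $\mathcal{S}$ of finite type, then every such member would be $\ka$-birational to a geometric fibre of $\varphi$, and the number of singular fibres of a relatively minimal conic bundle model of those fibres would be a constructible, hence bounded, function on the quasi-compact base $\mathcal{S}$ — contradicting its unboundedness along the class. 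Hence no such $\varphi$ exists and the class is $\ka$-birationally unbounded. The step I expect to be the main obstacle is precisely this last one: making rigorous that $\ka$-birational boundedness forces the conic bundle invariant to be bounded, i.e. the constructibility of $m$ over the finite-type base together with the birational invariance supplied by Lemma~\ref{rigidbundle}; everything before it is a direct specialisation of Example~\ref{keyexample} and Proposition~\ref{ratex}.
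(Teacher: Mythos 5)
Your construction is exactly the paper's: you specialise Example~\ref{keyexample} to $l=2$ with a non-square $u$ (so $\xi_2=-1\in\ka$), move the fixed points of $g$ to $(0:1)$ and $(1:0)$ by a $\ka$-automorphism of $\Pro^1_{\ka}$, choose the $\mu_i$ generically, and invoke Proposition~\ref{ratex}; that part coincides with the paper. Where you genuinely diverge is the final deduction of unboundedness. The paper argues by a dimension count: assuming a bounding family $\psi\colon\mathcal{X}\to\mathcal{S}$ with $\dim\mathcal{S}=m$, it puts $n=m+8$ and uses the full strength of Proposition~\ref{ratex}, namely the $(n-3)$-dimensional family of pairwise non-$\ka$-birational quotients, to get the contradiction $n-3>m$. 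You instead use only one quotient per $n$ together with the birational invariant ``number of singular fibres of a relatively minimal conic bundle model'' (well defined when $K^2\leqslant 0$ by Lemma~\ref{rigidbundle}), which is unbounded along the class but should be bounded on any $\ka$-birationally bounded class. This is a legitimate alternative, and you rightly flag its one nontrivial step: ``constructible, hence bounded'' is not automatic, since the invariant is read off a minimal model, not off the fibre itself. It can, however, be made rigorous: stratify $\mathcal{S}$ and, by resolution at generic points, spreading out and Noetherian induction, replace the relevant fibres by finitely many smooth projective families, on which $b_2$ of geometric fibres is bounded; then for any smooth projective model $W$ of such a quotient, running the MMP over $\ka$ gives a minimal model which, by the theorem of Iskovskikh underlying Lemma~\ref{rigidbundle} (birational maps from conic bundles with $K^2\leqslant 0$ preserve the fibration, so no del Pezzo occurs in the class), is again a relatively minimal conic bundle with the same number $m$ of singular fibres, whence $m+2=\operatorname{rk}\Pic(\overline{Y})\leqslant b_2(\overline{W})$ and $m$ is bounded on the family. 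So your route trades the paper's (itself somewhat informal) dimension count, which exploits the moduli of birational types produced by Proposition~\ref{ratex}, for a ``bounded families have bounded invariants'' lemma: you need less from the construction but more foundational work on boundedness; the paper needs the $(n-3)$-dimensional family but avoids that lemma.
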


\begin{proof}
Assume that for the group $G$ any $G$-quotient of a $\ka$-rational surface is $\ka$-birationally equivalent to a fibre of some family $\psi: \mathcal{X} \rightarrow \mathcal{S}$. Let $m$ be dimension of $\mathcal{S}$. Put $n = m + 8$.

The fixed points of the element $g \in G$ on $\Pro^1_{\ka}$ are defined over $\ka$. So we can assume that these points are $(1 : 0)$ and $(0 : 1)$. Now we can apply the construction of Example \ref{keyexample} to the group $G$, field $\ka$ and $n$ singular fibres on the quotient surface. By Proposition \ref{ratex} the dimension of family of such surfaces up to $\ka$-birationally equivalence is $n - 3$ that is greater than $m$. The obtained contradiction finishes the proof.

\end{proof}

\begin{remark}
\label{increasing}

In Lemma \ref{UmboundG} we actually show that for an arbitrarily large number $n$ there exists an $(n-3)$-dimensional family of Galois $\ka$-unirational conic bundles with $n$ singular fibres. This method is similar to \cite{Ok10}.

\end{remark}

Now we prove Theorem \ref{Unboundness}.

\begin{proof}[Proof of Theorem \ref{Unboundness}]

By Lemma \ref{UmboundG} to prove Theorem \ref{Unboundness} it is sufficient to find a representation of $G$ such that the fixed point of an element of order $2$ in $G$ are defined over $\ka$. The required representations are given in Lemmas \ref{Crepr}, \ref{Drepr}, \ref{A4repr}, \ref{S4repr} and \ref{A5repr}.

\end{proof}

\begin{remark}
\label{nonratquot}

Let us consider Example \ref{keyexample} and put some $\lambda_i$ being elements of $\ka$. Then the conic bundle $X \rightarrow B$ has some singular fibres. If the number of such fibres is greater than $3$ then $X$ is not $\ka$-rational. The family of conic bundles with fixed number of singular fibres is $\ka$-birationally bounded, but similarly to Theorem \ref{Unboundness} we can show that the family of $G$-quotients of such surfaces is $\ka$-birationally unbounded. Therefore there exists a non-$\ka$-rational $\ka$-birational type of surfaces which $G$-quotients are $\ka$-birationally unbounded.

\end{remark}

\begin{proof}[Proof of Corollary \ref{Uncremona}]
Let $G_1$ and $G_2$ be two subgroups of $\operatorname{Bir} \Pro^2_{\ka}$. Consider $\ka$-rational surfaces $X_1$ and $X_2$ such that the actions of $G_1$ on $X_1$ and $G_2$ on $X_2$ are regular. Recall that $G_1$ and $G_2$ are conjugate in $\operatorname{Bir} \Pro^2_{\ka}$ if and only if there exist a surface $X$ and two equivariant birational morhpisms $\pi_1: X \rightarrow X_1$ and $\pi_2: X \rightarrow X_2$ such that the actions of $G_1$ and $G_2$ coincide on $X$.

Therefore if $G_1$ and $G_2$ are conjugate then
$$
X_1 / G_1 \approx X / G_1 \approx X / G_2 \approx X_2 / G_2.
$$
Thus if for two subgroups $G_1 \cong G_2$ in $\operatorname{Bir} \Pro^2_{\ka}$ the quotients $X_1 / G_1$ and $X_2 / G_2$ are not $\ka$-birationally equivalent then $G_1$ and $G_2$ are not conjugate in $\operatorname{Bir} \Pro^2_{\ka}$.

By Theorem \ref{Unboundness} the class of $G$-quotients of $\ka$-rational surfaces is $\ka$-birationally unbounded. Therefore there are infinitely many subgroups isomorphic to $G$ in $\operatorname{Bir} \Pro^2_{\ka}$ up to conjugation.

\end{proof}

In Proposition \ref{ratex} we assume that points $(\lambda_i : 1)$ have trivial stabilizers in $G$. This case corresponds to case (2) of Proposition \ref{orbitpossibility}. Now we show that Example \ref{keyexample} works for orbits consisting of points with non-trivial stabilizer (cases (3)--(6) of Proposition \ref{orbitpossibility}).

\begin{lemma}
\label{examplest}

Let $G$ be a finite group acting on $B = \Pro^1_{\ka}$, $g$ be an element of order $2$ such that its fixed points are defined over $\ka$, $h$ be an element of odd order such that its fixed points are not defined over $\ka$ and \mbox{$ghg^{-1} \in \langle h \rangle$}. Let $p$ be a fixed point of $h$ on $\overline{B}$ and $q$ be its image on $\overline{B} / G$.

Then there exists a relatively $G$-minimal conic bundle $X \rightarrow B$ such that the fibre of $\XX \rightarrow \overline{B}$ over $p$ is singular and for a relative MMP-reduction $Y \rightarrow B /G$ of $X / G \rightarrow B / G$ the fibre of $\overline{Y} \rightarrow \overline{B} / G$ over $q$ is singular.

\end{lemma}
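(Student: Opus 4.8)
The plan is to construct $X$ by the recipe of Example~\ref{keyexample}, with the $G$-orbit of $p$ playing the role of one of the orbits of the points $(\lambda_i:1)$, and then to deduce the statement about $Y$ directly from Lemma~\ref{quotientofoddfibre}.

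\emph{Setup.} Since the fixed points of $g$ are defined over $\ka$, I may assume they are $(1:0)$ and $(0:1)$. The two fixed points of $h$ form a $\Gal(\kka/\ka)$-stable pair, neither member of which is $\ka$-rational, so $\ka(p)=\ka(\sqrt u)$ for some $u\in\ka^{*}\setminus(\ka^{*})^{2}$, and the nontrivial element $\gamma\in\Gal(\ka(\sqrt u)/\ka)$ interchanges $p$ and $p'$; by $ghg^{-1}\in\langle h\rangle$ and Lemma~\ref{fixedpoints} the element $g$ permutes $\{p,p'\}$. By Lemma~\ref{fixedpoints} the stabiliser of $p$ in $G$ is cyclic, so after replacing $h$ by a generator of it I may assume $\langle h\rangle=\mathrm{Stab}_{G}(p)$; then $\langle h\rangle$ is a maximal cyclic subgroup of $G$, and it has odd order — a singular fibre over $p$ would be $h$-invariant, so if $\langle h\rangle$ had even order Lemma~\ref{eveninvariantfibre} would force that fibre to be smooth and the lemma would be vacuous. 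In particular $g$ cannot fix $p$, so $g$ interchanges $p$ and $p'$; hence $p,p'$ lie in one $G$-orbit, this orbit is $\Gal(\kka/\ka)$-stable, its image $q$ on $B/G$ is a $\ka$-point, $p\notin\{(1:0),(0:1)\}$, and $Gp$ is disjoint from the $G$-orbit of $(1:0)$ (the latter has even stabiliser). This is the situation of Proposition~\ref{orbitpossibility}(3)--(6).

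\emph{Construction of $X$.} Apply the construction of Example~\ref{keyexample} to $G$, $\ka$, $u$ and $n=1$, taking $\lambda_{1}=\mu_{1}\sqrt u$ with $p=(\lambda_{1}:1)$; such a $\mu_{1}\in\ka^{*}$ exists precisely because $g$ interchanges $p$ and $\gamma p$. This produces a bundle $Z\to B$ defined over $\ka$ and $G$-equivariant, with general fibre a smooth conic; let $X\to B$ be a relative $G$-MMP-reduction of $Z\to B$. By Lemma~\ref{eveninvariantfibre} the fibres of $\XX\to\overline B$ over the ($g$-fixed) orbit of $(1:0)$ become smooth in $X$, so the singular fibres of $\XX\to\overline B$ lie over $Gp$. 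Over $p$ the first factor of the defining equation vanishes to order $\ord h$, so $Z$ has an $A_{\ord h-1}$ singularity on that fibre, at the point $(1:0:0)$; its minimal resolution inserts a chain of rational curves between the proper transforms $L_{1},L_{2}$ of the two lines $By^{2}+Cz^{2}=0$, which are defined over $\ka(\sqrt u)$ and interchanged by $\gamma$ because $B/C=-u$. The element $g\gamma$ fixes $p$ and interchanges $L_{1},L_{2}$, hence reverses this chain; since $\ord h$ is odd the chain has even length and thus has no $g\gamma$-fixed component, so running the relative minimal model program $G\times\Gal(\kka/\ka)$-equivariantly collapses it down to exactly two rational curves meeting at a point. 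Therefore the fibre of $\XX\to\overline B$ over $p$ is singular; its two components are interchanged by $g\gamma$ but, by Lemma~\ref{permutation} and the oddness of $\ord h$, by no element of $G$, so neither of the two $G$-orbits of components over $Gp$ is $\Gal(\kka/\ka)$-stable, whence $X\to B$ is relatively $G$-minimal. Then Lemma~\ref{quotientofoddfibre} applies: $\langle h\rangle=\mathrm{Stab}_{G}(p)$ is a maximal cyclic subgroup of odd order and the fibre $F$ of $\XX\to\overline B$ over $p$ is $h$-invariant and singular, so for a relative MMP-reduction $Y\to B/G$ of $X/G$ the image $f(F)$ — that is, the fibre of $\overline Y\to\overline B/G$ over $q$ — is singular, which is the assertion to be proved.

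\emph{Main obstacle.} The only delicate step is the claim in the middle paragraph that, after resolving the $A_{\ord h-1}$ singularity over $p$, the relative minimal model program leaves a genuine two-component fibre rather than a smooth one. The mechanism is that the reversing involution $g\gamma$ of the resolution chain has no fixed component because $\ord h$ is odd, so the chain cannot be contracted past its two central curves; this is the same bookkeeping with continued fractions and chains of rational curves that appears in the proof of Lemma~\ref{quotientofoddfibre}, and I expect it to be the one place needing a careful computation. Everything else — the descent of the defining equation to $\ka$ (carried out already in Example~\ref{keyexample}), the $\ka$-rationality of $q$, and the vanishing orders of the defining products — is routine.
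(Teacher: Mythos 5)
Your proposal follows essentially the same route as the paper's proof: choose coordinates so that $g$ acts by $(t_1:t_0)\mapsto(-t_1:t_0)$, note that $g$ and the Galois involution both interchange the two fixed points of $h$, so that $u=\lambda^2\in\ka$, feed this into the construction of Example~\ref{keyexample} with $\lambda_1=\lambda$, and conclude with Lemma~\ref{quotientofoddfibre}. Where you go beyond the paper is the middle step: the paper dismisses the claim that the fibre of $\XX\to\overline B$ over $p$ remains singular with one sentence ($g\gamma$ permutes the components of the fibre of $\overline Z$ over $p$), whereas you resolve the $A_{\ord h-1}$ point and track the even-length chain under the reversing involution $g\gamma$ to see that the relative MMP must stop at a two-component fibre whose components meet, so their $G\times\Gamma$-orbit cannot be contracted. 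That is the honest verification of the paper's assertion, and it is exactly where the oddness of $\ord h$ enters.

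The one step I would not accept as written is the claim that $\mathrm{Stab}_G(p)$ has odd order ``because otherwise the lemma would be vacuous''. That is not an argument: the lemma asserts existence, and if the stabilizer of $p$ contained an involution, then by Lemma~\ref{eveninvariantfibre} no relatively $G$-minimal bundle with faithful base action could have a singular fibre over $p$ at all (and in your construction the chain would have odd length, so the $g\gamma$-fixed middle component would survive and the fibre would come out smooth). Oddness of the stabilizer is a genuine extra assumption, not a consequence of the stated hypotheses: for instance $G\cong\DG_{12}\subset\mathrm{PGL}_2(\Q)$ with $h$ of order $3$, $p$ a pole and $g$ a reflection satisfies every hypothesis of the lemma, yet $\mathrm{Stab}_G(p)\cong\CG_6$. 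In fairness, the paper's own proof silently makes the same assumption, and it holds in all cases (3)--(6) of Proposition~\ref{orbitpossibility} where the lemma is used, so the gap is the paper's as much as yours. Still, you should separate the two issues: the fact that $g$ swaps the fixed points of $h$ follows directly from $ghg^{-1}\in\langle h\rangle$, Lemma~\ref{fixedpoints} and the rationality hypotheses, with no parity assumption; whereas $\langle h\rangle=\mathrm{Stab}_G(p)$ (equivalently, odd stabilizer) should be stated as an explicit hypothesis --- it is also precisely what makes $\langle h\rangle$ a maximal cyclic subgroup, which Lemma~\ref{quotientofoddfibre} requires.
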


\begin{proof}

The fixed points of $g$ are defined over $\ka$ so we can assume that $g$ acts as
$$
\left( t_1 : t_0 \right) \mapsto \left( -t_1 : t_0 \right)
$$
\noindent and its fixed points are $(1 : 0)$ and $(0 : 1)$. Thus we can apply construction of Example \ref{keyexample}. One has $ghg^{-1} \in \langle h \rangle$ therefore the element $g$ permutes fixed points of $h$. So the element $h$ has fixed points $(\lambda : 1)$, $(-\lambda : 1)$. These points are permuted by the group
$$
\CG_2 \cong \Gal \left( \ka \left( \lambda \right) / \ka \right),
$$
\noindent thus $\lambda^2 \in \ka$. In the notation of Example \ref{keyexample} we can put $u = \lambda^2$, $\lambda_1 = \lambda$.

The fibre of $\XX \rightarrow \overline{B}$ over $p = (\lambda : 1)$ is singular since the element $g \gamma$ permutes components of the fibre of $\overline{Z} \rightarrow \overline{B}$ over $p$ (see the notation of Example \ref{keyexample}). The fibre of $Y \rightarrow B / G$ over $q$ is singular by Lemma~\ref{quotientofoddfibre}.

\end{proof}

\begin{proposition}

In the notation of Proposition \ref{orbitpossibility} for each case (3)--(6) there exists a field $\ka$ and a $G$-equivariant conic bundle $X \rightarrow B$ such that the components of $F$ are not permuted by $\Gamma$ and the components of $f(F)$ are permuted by~$\Gamma$.

In particular, this condition holds

\begin{itemize}

\item for the case (3) if $\cos \frac{2\pi}{2k+1} \in \ka$ and $\xi_{2k + 1} \notin \ka$;

\item for the case (4) if $i\sqrt{2} \in \ka$ and $\xi_3 \notin \ka$;

\item for the case (5) if $i \in \ka$, $\sqrt{5} \in \ka$ and $\xi_5 \notin \ka$;

\item for the case (6) if $i \in \ka$, $\sqrt{5} \in \ka$ and $\xi_3 \notin \ka$.

\end{itemize}

\end{proposition}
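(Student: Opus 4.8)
The plan is to deduce the statement directly from Lemma~\ref{examplest}: in each of the cases (3)--(6) I would produce, over a field $\ka$ satisfying the stated condition, an element $g \in G$ of order $2$ whose fixed points on $\Pro^1_{\ka}$ are defined over $\ka$ together with an element $h \in G$ of odd order whose fixed points are \emph{not} defined over $\ka$ and with $ghg^{-1} \in \langle h \rangle$; the conic bundle required is then the one furnished by Lemma~\ref{examplest}. For the realization $G \subset \mathrm{PGL}_2(\ka)$ and the involution $g$ I would use the explicit models of Lemma~\ref{Drepr} in case~(3), the $i\sqrt{2}$-branch of Lemma~\ref{S4repr} in case~(4), and Lemma~\ref{A5repr} in cases~(5) and~(6), each of which supplies an order-$2$ element with rational fixed points. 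One then checks that this element lies in the conjugacy class prescribed by Proposition~\ref{orbitpossibility}: in $\DG_{4k+2}$ every involution is a reflection, since $2k+1$ is odd and there is no rotation of order~$2$; in $\SG_4$ the element $\widetilde{I} + J$ does not commute with the double transposition $\widetilde{I}$, so it lies outside the normal Klein four-subgroup and is therefore a transposition, hence up to conjugation $g = (12)$; and in $\AG_5$ every involution is a double transposition, so up to conjugation $g = (25)(34)$ or $g = (12)(45)$.

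For $h$ I would take the generator of the normal subgroup $\CG_{2k+1}$ in case~(3), $h = (123)$ in cases~(4) and~(6), and $h = (12345)$ in case~(5). The relation $ghg^{-1} \in \langle h \rangle$ is then immediate: a reflection conjugates the rotation of a dihedral group to its inverse, and $(12)(123)(12) = (132)$, $(25)(34)(12345)(25)(34) = (15432)$, $(12)(45)(123)(12)(45) = (132)$ are each equal to $h^{-1}$. Since $\ord h$ is $2k+1$, $3$, $5$, $3$ in the four cases and always exceeds $2$, Lemma~\ref{Fixeddefined} shows the fixed points of $h$ on $\Pro^1_{\ka}$ are defined over $\ka$ precisely when $\xi_{\ord h} \in \ka$; the hypotheses $\xi_{2k+1} \notin \ka$, $\xi_3 \notin \ka$, $\xi_5 \notin \ka$, $\xi_3 \notin \ka$ are exactly what makes these fixed points irrational. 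Thus $(G, g, h)$ satisfies the hypotheses of Lemma~\ref{examplest}, which produces a relatively $G$-minimal conic bundle $X \to B$ with the fibre $F$ of $\XX \to \overline{B}$ over a fixed point $p$ of $h$ singular and the fibre $f(F)$ of $\overline{Y} \to \overline{B}/G$ over the image $q$ of $p$ singular, where $Y$ is a relative MMP-reduction of $X/G$ and $f: X \dashrightarrow Y$ is the induced map.

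Finally I would control the action of $\Gamma = \Gal(\kka/\ka)$ on the components of $F$ and of $f(F)$ using the explicit form of $X$ in Example~\ref{keyexample}. There $p = (\lambda : 1)$ with $\lambda^2 = u$ a non-square of $\ka$, the two components of $F$ are defined over $\ka(\sqrt{u})$, and the nontrivial element $\gamma$ of $\Gal(\ka(\sqrt{u})/\ka)$ sends $\lambda$ to $-\lambda$ and hence carries $p$ to $gp \ne p$; therefore no element of $\Gamma$ both fixes $F$ and interchanges its components, which proves the first assertion and simultaneously rules out case~(1) of Proposition~\ref{orbitpossibility}, so we are indeed in one of the cases~(3)--(6). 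On the other hand $f(F)$ is the degenerate conic $y^2 = u z^2$, whose components $y = \pm\sqrt{u}\,z$ are swapped by $\gamma$, which is the second assertion. The sufficient conditions listed in the statement then follow by producing one field in each case: $\ka = \Q\!\left(\cos\tfrac{2\pi}{2k+1}\right)$ is totally real and hence omits $\xi_{2k+1}$; $\ka = \Q(\sqrt{-2})$ contains $i\sqrt{2}$ but omits $\xi_3$, since $\Q(\xi_3) = \Q(\sqrt{-3})$ is a different imaginary quadratic field; and $\ka = \Q(i,\sqrt{5})$, whose only quadratic subfields are $\Q(i)$, $\Q(\sqrt{5})$ and $\Q(\sqrt{-5})$, omits $\Q(\xi_3) = \Q(\sqrt{-3})$ as well as $\Q(\xi_5)$ (the latter because $\Q(\xi_5)/\Q$ is cyclic while $\Q(i,\sqrt{5})/\Q$ is not). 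I expect the main obstacle to be precisely this last compatibility requirement --- realizing $G$ over $\ka$ while keeping $\xi_{\ord h}$ outside $\ka$ --- which is the reason the $i\sqrt{2}$-branch of Lemma~\ref{S4repr} (rather than the $i$-branch) is used in case~(4).
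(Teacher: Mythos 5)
Your proposal is correct and follows essentially the same route as the paper: reduce to Lemma~\ref{examplest} with a pair $(g,h)$, $\ord g = 2$, $\ord h$ odd, $ghg^{-1}\in\langle h\rangle$, realize $G$ via the representations of Lemmas~\ref{Drepr}, \ref{S4repr} (the $i\sqrt{2}$-branch) and \ref{A5repr}, and use Lemma~\ref{Fixeddefined} to make the fixed points of $h$ irrational. You merely fill in details the paper leaves implicit (the explicit conjugation checks, the identification of the Galois action on the components of $F$ and $f(F)$ via Example~\ref{keyexample}, and sample fields), so no changes are needed.
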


\begin{proof}

In each group $\DG_{4k + 2}$, $\SG_4$ and $\AG_5$ there exist two elements $g$ and $h$ such that $\ord g = 2$, $\ord h$ is odd and $ghg^{-1} \in \langle h \rangle$. By Lemma \ref{examplest} it is sufficient to find a field $\ka$ and a representation of the group $G$ in $\mathrm{PGL}_2(\ka)$ such that the fixed points of the element $g$ are $\ka$-points and the fixed points of an element $h$ are not defined over $\ka$.

By Lemma \ref{Fixeddefined} if the fixed points of element $h$ are not defined over $\ka$ then $\xi_{\ord h} \notin \ka$. The representations given in Lemmas \ref{Drepr}, \ref{S4repr} and \ref{A5repr} satisfy the previous conditions.

\end{proof}

\bibliographystyle{alpha}
\bibliography{my_ref}

\def\cprime{$'$}
\begin{thebibliography}{XX}

\bibitem[Al94]{Al94}
V.\,Alexeev,
\newblock Boundedness and $K^2$ for log surfaces,
\newblock Internat. J. Math., 1996, 5(6), 779--810

\bibitem[B96]{B96}
A.\,Beauville,
\newblock Complex algebraic surfaces (2nd ed.),
\newblock London Mathematical Society Students Texts. 34. Cambridge: Cambridge University Press, 1996

\bibitem[Bl17]{Bl17}
H.\,F.\,Blichfeldt,
\newblock Finite collineation groups,
\newblock The Univ. Chicago Press, Chicago, 1917

\bibitem[Cast94]{Cast94}
G.\,Castelnuovo,
\newblock Sulla razionalit\`{a} delle involuzioni piane,
\newblock Math. Ann., 1894, 44, 125--155

\bibitem[DI09a]{DI1}
I.\,V.\,Dolgachev, V.\,A.\,Iskovskikh,
\newblock Finite subgroups of the plane Cremona group,
\newblock In: Algebra, arithmetic, and geometry, vol. I: In Honor of Yu.\,I.\,Manin, Progr. Math., 269, 443--548, Birkh\"auser, Basel, 2009

\bibitem[DI09b]{DI2}
I.\,V.\,Dolgachev, V.\,A.\,Iskovskikh,
\newblock On elements of prime order in the plane Cremona group over a perfect field,
\newblock Int. Math. Res. Not. IMRN, 2009, 18, 3467--3485

\bibitem[Isk67]{Isk67}
V.\,A.\,Iskovskikh,
\newblock Rational surfaces with a pencil of rational curves,
\newblock Mathematics of the USSR sbornik, 1967, 3(4), 563--587

\bibitem[Isk79]{Isk79}
V.\,A.\,Iskovskikh,
\newblock Minimal models of rational surfaces over arbitrary field,
\newblock Math. USSR Izv., 1980, 14(1), 17--39

\bibitem[Isk96]{Isk96}
V.\,A.\,Iskovskikh,
\newblock Factorization of birational mappings of rational surfaces from the point of view of Mori theory,
\newblock Russian Math. Surveys, 1996, 51(4), 585--652

\bibitem[Lur76]{Lur76}
J.\,L\"{u}roth,
\newblock Beweis eines Satzes tiber rationale Kurven,
\newblock Math. Ann., 1876, 9, 163--165

\bibitem[Man67]{Man67}
Ju.\,I.\,Manin,
\newblock Rational surfaces over perfect fields. II,
\newblock Math. Sb., 1967, 1(2), 141--168

\bibitem[Man74]{Man74}
Yu.\,I.\,Manin,
\newblock Cubic forms: algebra, geometry, arithmetic,
\newblock In: North-Holland Mathematical Library, Vol. 4, North-Holland Publishing Co., Amsterdam-London; American Elsevier Publishing Co., New York, 1974

\bibitem[Ok10]{Ok10}
T.\,Okada,
\newblock Birational unboundedness of log terminal Q-Fano varieties and rationally connected strict Mori fiber spaces,
\newblock Research Institute for Mathematical Sciences, Kyoto University, Kyoto, 2010
% Уточнить!

\bibitem[Pop14]{Pop14}
V.\,L.\,Popov,
\newblock Jordan groups and automorphism groups of algebraic varieties,
\newblock Groups of Automorphisms in Birational and Affine Geometry (Trento, Italy, 2012), Lecture Notes in Mathematics, Springer, 2014

\bibitem[Pr13]{Pr13}
Yu.\,Prokhorov,
\newblock On stable conjugacy of finite subgroups of the plane Cremona group, II,
\newblock Preprint, see http://arxiv.org/abs/1308.5698

\bibitem[Spr77]{Spr77}
T.\,A.\,Springer,
\newblock Invariant theory,
\newblock Springer-Verlag, Berlin, Heidelberg, New York, 1977

\end{thebibliography}
\end{document}